\def\Ker{\mathop{\rm Ker}}
\def\dim{\mathop{\rm dim}}
\newcommand{\bR}{\mathbb{R}}
\newcommand{\cX}{{\cal X}}
\newcommand{\cY}{{\cal Y}}
\def\cY{{\cal Y}}
\def\rank{{\rm rank}}
\def\Im{{\rm Im}}
\newcommand{\lleq}{\mathrel{\mathpalette\gl@align<}}
\newcommand{\ggeq}{\mathrel{\mathpalette\gl@align>}}
\newcommand{\gl@align}[2]{
\vbox{\baselineskip\z@skip\lineskip\z@
\ialign{$\m@th#1\hfil##\hfil$\crcr#2\crcr{}_{{}_{(=)}}\crcr}}}
\def\Label{\label}
\newenvironment{proofof}[1]{\vspace*{5mm} \par 
\noindent{\it Proof of #1:\hspace{2mm}}}{\endproof
\hfill$\Box$ 
}
\def\PF#1{\noindent{\sf #1}:\quad}
\begin{document}

\title{Local Equivalence Problem in Hidden Markov Model}


\author{Masahito Hayashi}


\institute{M. Hayashi \at
              the Graduate School of Mathematics, Nagoya University, Japan,
              Center for Advanced Intelligence Project, RIKEN, Japan,
              Shenzhen Institute for Quantum Science and Engineering, Southern University of Science and Technology, China,
               \& 
               the Centre for Quantum Technologies, National University of Singapore, Singapore\\
              \email{e-mail:masahito@math.nagoya-u.ac.jp}           
}

\date{Received: date / Accepted: date}

\maketitle

\begin{abstract}
In the hidden Markov process, there is a possibility that
two different transition matrices for hidden and observed variables
yield the same stochastic behavior for the observed variables.
Since such two transition  matrices cannot be distinguished, we need to identify them
and consider that they are equivalent, in practice.
We address the equivalence problem of hidden Markov process
in a local neighborhood by using the geometrical structure of hidden Markov process.
For this aim, we introduce
a mathematical concept to express Markov process,
and formulate its exponential family by using generators.
Then, the above equivalence problem is formulated as the equivalence problem of generators.
Taking this equivalence problem into account, we derive several concrete parametrizations 
in several natural cases.
\keywords{Hidden Markov
\and 
equivalence problem
\and 
information geometry
\and exponential family}
\end{abstract}

\section{Introduction}\Label{s1}
An observed variable $Y$ subject to a hidden Markov process
is  determined by a hidden variable $X$ subject to Markov process.
That is, 
the stochastic behavior of variables subject to a hidden Markov process 
is characterized by a pair of a transition matrix $V$ from the hidden variable $X$ to the observed variable $Y$
and a transition matrix $W$ on the hidden variable $X$ as Fig. \ref{2model}.
Now, we assume that our interest is limited to the stochastic behavior of the observed variable $Y$,
which is described by the pair of transition matrices.
However, there is ambiguity for the pair of the function and the transition matrix 
to express the hidden Markov process
when our interest is limited to the stochastic behavior of the observed variable $Y$. 
That is, there is a possibility that two different pairs express 
the same stochastic behavior of the observed variable $Y$. 
The problem to characterize such two different pairs is called the equivalence problem. 
When the transition matrix $V$ is given by a deterministic function $f$ as Fig. \ref{1model},
it was solved by Ito, Amari and Kobayashi \cite{IAK}.
When the number of states in the hidden system cannot be identified,
we need to choose the minimum number of the states to realize 
the given stochastic behavior of the observed variable $Y$. 
This kind of problem might be crucial
when the structure of hidden Markov process is not known.
However, since the asymptotic error is characterized by the local geometrical structure,
to discuss the estimation of the hidden Markov process, we need to consider this problem in the tangent space, which was not addressed in \cite{IAK}.
Indeed, as explained later, this problem is deeply related to the geometrical structure of hidden Markov process.

As another problem, 
we address the formulation of information geometrical structure, 
especially, an exponential family, for hidden Markov process.
Indeed, information geometry was established by Amari and Nagaoka \cite{AN}
as a very powerful method for statistical inference.
Nakagawa and Kanaya \cite{NK} and Nagaoka \cite{HN} 
addressed its extension to Markov process and 
formulated an exponential family for transition matrices.
As an advantage of an exponential family for transition matrices,
the geometric structure depends only on the transition matrices, and
it does not change as the number $n$ of observation increases
while the geometry based on the probability distribution changes according to the increase of the number $n$. 
Recently, the paper \cite{HW-est} applied this geometrical structure to estimation of Markov process, and clarified the importance of this kind of exponential families for statistical inference
by employing the following two facts;
Information geometry of an exponential family for transition matrices is given as 
Bregman divergence \cite{Br,Am} of the cumulant generating function $\phi(\vec{\theta})$.
All the asymptotic statistical properties can be recovered by 
the cumulant generating function $\phi(\vec{\theta})$ in the Markov process \cite{HW14-2}.
In particular, when the unknown transition matrix is assumed to belong to an exponential family for transition matrices, the asymptotic efficiency of the estimator for the expectation parameter
was shown in the same way as the independent and identical distribution \cite{HW-est}.
However, the formulation of exponential family for hidden Markov process was not discussed in these existing papers.
This formulation is needed when we extend the idea in  \cite{HW14-2} to the hidden Markov process \cite{HHM}.

In this paper, to formulate an exponential family for hidden Markov process, 
due to the following reason,
we address the model given in Fig \ref{3model} for hidden Markov process, in which,
the next hidden variable and the observed variable are correlated even when the previous hidden variable is fixed.
Indeed, the model of Fig. \ref{1model} is generalized to 
the model of Fig. \ref{2model} by replacing the deterministic function $f$ by another transition matrix $V$. 
Both models have a complicated structure to define an exponential family directly.
At least, when we employ these models, the definition of an exponential family is not so natural.
In contrast, as explained in Remark \ref{R2}, 
the model given in Fig \ref{3model} is most convenient for the discussion of the equivalence problem,
and contains the above two cases.
Notice that by extending the hidden system,  
the model of Fig. \ref{1model} includes the model of Fig. \ref{3model},
which shows the equivalence among three models.
Hence, we formulate the model of Fig. \ref{3model} by introducing a mathematical concept 
{\it ${\cal Y}$-indexed transition matrix}, and define 
an exponential family of ${\cal Y}$-indexed transition matrices.
In this definition, generators play an essential role and express the infinitesimal changes.
The local equivalence problem is reduced to the equivalent problem for generators.
That is, we derive a necessary and sufficient condition for an infinitesimal change of the transition matrix 
to be distinguished. 
In this way, we can discuss the above two tasks simultaneously.

Further, we address several concrete examples.
For example, we give a concrete parametrization taking the local equivalence into account
when the hidden system and the observed system are composed of two states.
Also, we apply the definition of an exponential family of ${\cal Y}$-indexed transition matrices
to the model given in Fig. \ref{2model}.
Then, we characterize the local equivalence in this special case more concretely.
In particular, under a certain natural condition, 
we give a concrete parametrization under this model.

\begin{figure}[htbp]
\begin{center}
\scalebox{1}{\includegraphics[scale=0.5]{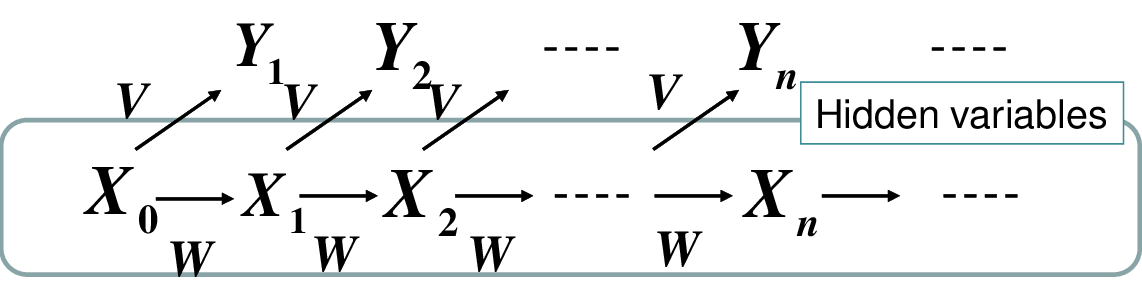}}
\end{center}
\caption{The second model: 
The transition matrix $W$ determines the Markov process on the set ${\cal X}$ of hidden states.
The transition matrix $V$ determines the observed variable $Y$
with the condition on the hidden variable $X$.}
\Label{2model}
\end{figure}%

\begin{figure}[htbp]
\begin{center}
\scalebox{1}{\includegraphics[scale=0.5]{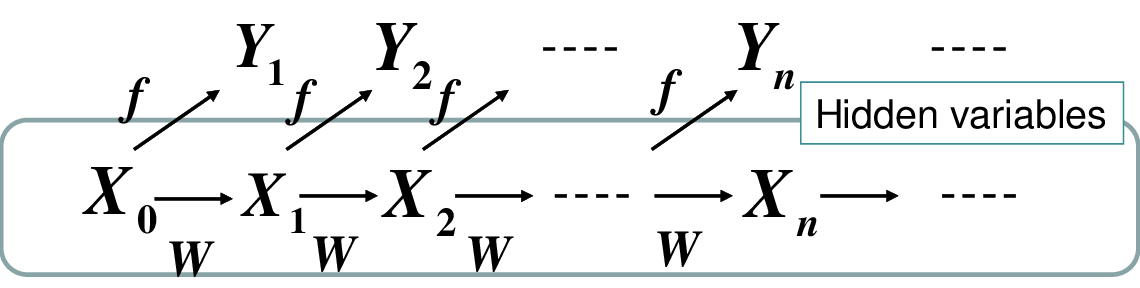}}
\end{center}
\caption{The first model: 
The transition matrix $W$ determines the Markov process on the set ${\cal X}$ of hidden states.
The function $f$ of the hidden variable $X$ determines the observed variable $Y$.}
\Label{1model}
\end{figure}%

\begin{figure}[htbp]
\begin{center}
\scalebox{1}{\includegraphics[scale=0.5]{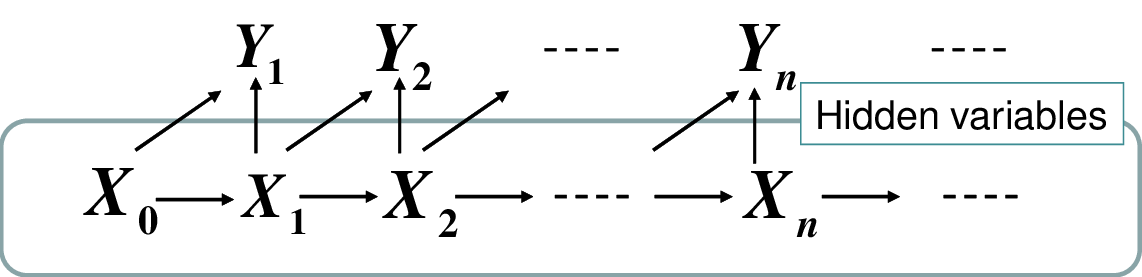}}
\end{center}
\caption{The third model: 
the hidden variable $X_i$ and the observed variable $Y_i$ are correlated even when 
the previous hidden variable $X_{i-1} $is fixed.}
\Label{3model}
\end{figure}%

The remaining of this paper is organized as follows.
Section \ref{s2} gives a brief summary of the obtained results, which is crucial for understanding the structure of this paper.
Section \ref{s5} introduces the notion of ${\cal Y}$-indexed transition matrix
to describe the model given in Fig. \ref{3model},
and revisits the equivalence problem of hidden Markov process under this formulation.
Section \ref{s6} introduces an exponential family of  ${\cal Y}$-indexed transition matrices,
and discusses the local equivalence problem under the model given in Fig. \ref{3model}.
By taking local equivalence into account,
The remaining sections are outlined in Subsection \ref{s2-3}.

\if0
Section \ref{s6-3} concretely constructs generators under natural conditions.
Section \ref{s7} applies these general results to the case when the hidden system and the observed system are composed of two states.
Section \ref{s8} applies these general results to the model given in Fig. \ref{2model},
and discusses how to choose the parametrization.
This section simplifies the equivalence condition under this model.
\fi

\if0 
Section \ref{s3} reviews the fundamental facts for an exponential family of transition matrices and its properties.
Section \ref{s4} introduced partial observation model for Markov process, in which, we can observe a part of random variables.
Section \ref{s4-5} applies the model to the case of $k$-memory transition matrices.

\section{Summary of results}\Label{s2}
\subsection{1}\Label{s2-1}
\subsection{2}\Label{s2-2}
\subsection{3}\Label{s2-3}
\fi

\section{Summary of obtained results}\Label{s2}
\subsection{Global equivalence}
First, we adopt the model given in Fig. \ref{3model}, which is called the general model.
To address this model,
we consider a collection of non-negative matrices $\vec{W}= (W_y(x|x'))_{y \in {\cal Y}}$ on the hidden system ${\cal X}$
with the condition that $\sum_{y \in {\cal Y}}W_y$ is a probability transition matrix,
where
a matrix is called non-negative when all of its matrix components are non-negative.
Then, we have the transition matrix 
$P_{Y,X|X'}(y,x|x'):=W_y(x|x')$, which describes the stochastic behavior of this model.
When the initial distribution is given by a distribution $P$ on ${\cal X}$,
we have the joint distribution of the observed sequence $Y_k,\ldots, Y_1$ as
$P^{k}[\vec{W}] \cdot P (y_k,\ldots, y_1):=
\sum_{x\in {\cal X}}P^k[\vec{W}](y_k, \ldots, y_1|x)P(x)$,
where
the transition matrix $P^k[\vec{W}]$ from ${\cal X}$ to ${\cal Y}^k$ is given as
\begin{align}
P^k[\vec{W}](y_k, \ldots, y_1|x'):=\sum_{x\in {\cal X}}W_{y_k}\cdots W_{y_1}(x|x').
\end{align}
Then, a pair of $\vec{W}$ and $P$ is called equivalent to another pair of $\vec{W}'$ and $P'$
when they have the same stochastic behavior with respect to 
of the observed sequence $Y_k,\ldots, Y_1$ with an arbitrary $k$.
We call $\vec{W}$ a ${\cal Y}$-indexed transition matrix on ${\cal X}$.
When the transition matrix
$|\vec{W}|(x|x'):=\sum_{y \in {\cal Y}} W_y(x|x')$ on ${\cal X}$ is irreducible,
the ${\cal Y}$-indexed transition matrix $\vec{W}$ is called irreducible.
For the following discussion, we employ the vector space 
${\cal V}_{{\cal X}}:= \{ v=(v_x)_{x \in {\cal X}}| v_x \in \bR\}$, 
i.e., the space ${\cal V}_{{\cal X}}$ is spanned by basis $\{e_x\}_{x \in {\cal X}}$.
The matrix $P^k[\vec{W}]$ can be regarded as a linear map from 
${\cal V}_{{\cal X}}$ to ${\cal V}_{{\cal Y}^k}$.
Define $k_{\vec{W}}$ as the minimum integer 
to satisfy the condition $\Ker P^k[\vec{W}]=P^n[\vec{W}]$ for $n \ge k$.

In the following, we regard a distribution $P$ on ${\cal X}$ as an element of $ {\cal V}_{{\cal X}}$.
Then, $[P]$ denotes an element of
the quotient space $ {\cal V}_{{\cal X}}/\Ker P^{k_{\vec{W}}}[\vec{W}]$
whose representative is $P$.
Given a distribution $P$ on $\cX$ and a positive integer $k$, 
we define the subspace ${\cal V}^k(P)$ of 
${\cal V}_{{\cal X}}/ \Ker P^{k_{\vec{W}}}[\vec{W}]$
spanned by $\{ [W_{y_{k'}}\cdots W_{y_1}P] | y_j \in \cY, k'\le k\}$.
We denote the minimum integer $k_0$ satisfying the following condition by $k_{(P,\vec{W})}$,
and call it the {\it minimum length} of $(P,\vec{W})$:
\begin{align}
\cup_{k =1}^{\infty} {\cal V}^k(P) = {\cal V}^{k_0}(P),
\end{align}
where the existence of the minimum will be shown later (Lemma \ref{L2T}).
Then, we have the following theorem.

\begin{theorem}\Label{sum-L-9-27}
The following conditions for 
two collections of non-negative matrices $\vec{W}$, $\vec{W}'$ 
and
two distributions $P$, $P'$ on $\cX$ 
are equivalent.
\begin{description}
\item[\bf (A1)]
There exists an invertible map $T$
from ${\cal V}^{k_{(P,\vec{W})}}(P)$
to ${\cal V}^{k_{(P',\vec{W}')}}(P')$ such that
the relation $T [W_y]  =[W_y'] T$ 
holds for $y\in {\cal Y}$ 
and the equation $T [P]=[P']$ holds.

\item[\bf (A1)']
{\bf (A1)} holds, and 
the relations $k_{\vec{W}}=k_{\vec{W}'}$,
$k_{(P,\vec{W})}=k_{(P',\vec{W}')}$, and $d_{(P,\vec{W})}=d_{(P',\vec{W}')}$
hold.

\item[\bf (A2)]
The pair of $\vec{W}$ and $P$ is equivalent to the pair of $\vec{W}'$ and $P'$.

\item[\bf (A3)]
The relations 
$k_{\vec{W}}=k_{\vec{W}'}$,
$k_{(P,\vec{W})}=k_{(P',\vec{W}')}$,
and
$P^{k_{\vec{W}}+k_{(P,\vec{W})}+1}[\vec{W}] \cdot P
=P^{k_{\vec{W}}+k_{(P',\vec{W}')}+1}[\vec{W}'] \cdot P'$ 
hold.

\item[\bf (A4)]
The relation 
$P^{k}[\vec{W}] \cdot P
=P^{k}[\vec{W}'] \cdot P'$ 
holds for $k=\max (k_{\vec{W}},k_{\vec{W}'})
+\max(k_{(P,\vec{W})},k_{(P',\vec{W}')})+1$.
\end{description}
\end{theorem}
While the equivalence between {\bf (A1)} and {\bf (A2)} was shown in \cite{IAK},
other equivalence relations were not shown.

Due to this theorem,
in order to check the equivalence condition, 
it is sufficient to check the stochastic behavior 
of the observed sequence $Y_k,\ldots, Y_1$ with length $k=\max (k_{\vec{W}},k_{\vec{W}'})
+\max(k_{(P,\vec{W})},k_{(P',\vec{W}')})+1$.

\subsection{Local equivalence}
Given an irreducible ${\cal Y}$-indexed transition matrix $\vec{W}$, 
we consider the set ${\cal G}_1(({\cal Y}\times {\cal X}^{2})_{\vec{W}})$
of functions $g(y,x,x')$ to satisfy the condition 
$\sum_{y\in {\cal Y},x\in {\cal X}}g(y,x,x')W_{y}(x|x')=0$ for any $x'\in {\cal X}$.
Given functions $g_1, \ldots, g_l \in {\cal G}_1(({\cal Y}\times {\cal X}^{2})_{\vec{W}})$,
we consider the exponential family $\{\vec{W}_{\vec{\theta}}\}_{\vec{\theta}}$ of 
${\cal Y}$-indexed transition matrices on ${\cal X}$
generated by the generators $\{g_i\}_{i=1}^l$, which is defined in Subsection \ref{s6-1}.

Then, as the derivative version of the conditions {\bf (A3)} and {\bf (A4)} 
with the parametrization $\vec{\theta}=(a^j t)_{j=1}^l$ for
a vector $\vec{a} \in \mathbb{R}^l$, 
we have the following conditions.
\begin{description}
\item[\bf (B3)]
The relation $\sum_{j=1}^l a^j \frac{\partial }{\partial \theta^j}
P^{k_{\vec{W}}+k_{(P,\vec{W})}+1}[\vec{W}_{\vec{\theta}}]\cdot P \Big|_{\vec{\theta}=0}= 0$ holds.

\item[\bf (B4)']
The relation $\sum_{j=1}^l a^j \frac{\partial }{\partial \theta^j}
P^{k}[\vec{W}_{\vec{\theta}}]\cdot P \Big|_{\vec{\theta}=0}= 0$ holds 
for any integer $k \ge k_{(W,V)}+k_{(P_{(W,V)},(W,V))}+1$.
\end{description}
These conditions can be considered as 
equivalence condition

To characterize these conditions,  
we also consider subspaces 
${\cal N}_P(({\cal Y}\times {\cal X}^{2})_{\vec{W}})$,
${\cal N}_{2,P}(({\cal Y}\times {\cal X}^{2})_{\vec{W}})$, and
${\cal N}_2(({\cal Y}\times {\cal X}^{2})_{\vec{W}})$
of 
${\cal G}_1(({\cal Y}\times {\cal X}^{2})_{\vec{W}})$ as defined in Subsection \ref{s6-1}.
Then, we have the following theorem.

\begin{theorem}\Label{sum-L27-2}
Given 
a distribution $P$ on ${\cal X}$,
the following conditions are equivalent with the conditions {\bf (B3)} and {\bf (B4)'} 
for 
a vector $\vec{a} \in \mathbb{R}^l$.

\begin{description}
\item[\bf (B1)]
The function $\sum_{j=1}^l a^j g_j \in {\cal G}_1(({\cal Y}\times {\cal X}^{2})_{\vec{W}})$
belongs to 
$
{\cal N}_P(({\cal Y}\times {\cal X}^{2})_{\vec{W}})
+{\cal N}_{2,P}(({\cal Y}\times {\cal X}^{2})_{\vec{W}})$.

\end{description}
\end{theorem}

Next, we focus on the stationary distribution $P_{\vec{W}_{\vec{\theta}}}$ of the transitive matrix
$|\vec{W}_{\vec{\theta}}|$.
Then, we have the following theorem.

\begin{theorem}\Label{sum-27-3}
The following conditions are equivalent for 
a vector $\vec{a} \in \mathbb{R}^l$.
\begin{description}
\item[\bf (C1)]
The function $\sum_{j=1}^l a^j g_j \in {\cal G}_{1}(({\cal Y}\times {\cal X}^{2})_{\vec{W}})$
belongs to $
{\cal N}_2(({\cal Y}\times {\cal X}^{2})_{\vec{W}})
+{\cal N}_{P_{\vec{W}}}(({\cal Y}\times {\cal X}^{2})_{\vec{W}})$.


\item[\bf (C3)]
The relation $\sum_{j=1}^l a^j \frac{\partial }{\partial \theta^j}
P^{k_{\vec{W}}+k_{(P_{\vec{W}},\vec{W})}+1}
[\vec{W}_{\vec{\theta}}]
\cdot P_{\vec{W}_{\vec{\theta}}} \Big|_{\vec{\theta}=0} = 0$ holds.

\item[\bf (C4)']
The relation $\sum_{j=1}^l a^j \frac{\partial }{\partial \theta^j}
P^{k}[\vec{W}_{\vec{\theta}}]
\cdot P_{\vec{W}_{\vec{\theta}}} \Big|_{\vec{\theta}=0} = 0$ holds
with any integer $k \ge k_{(W,V)}+k_{(P_{(W,V)},(W,V))}+1$.
\end{description}
\end{theorem}

These theorems clarify the following points.
In order check the zero-derivative condition, 
it is sufficient to check the stochastic behavior 
of the observed sequence $Y_k,\ldots, Y_1$ with length $k=\max (k_{\vec{W}},k_{\vec{W}'})
+\max(k_{(P,\vec{W})},k_{(P',\vec{W}')})+1$.
Also, the zero-derivative conditions can be converted to 
the conditions {\bf (B1)} and {\bf (C1)}
by using the subspaces
${\cal N}_P(({\cal Y}\times {\cal X}^{2})_{\vec{W}})$,
${\cal N}_{2,P}(({\cal Y}\times {\cal X}^{2})_{\vec{W}})$, 
${\cal N}_2(({\cal Y}\times {\cal X}^{2})_{\vec{W}})$, and
${\cal N}_{P_{\vec{W}}}(({\cal Y}\times {\cal X}^{2})_{\vec{W}})$
of 
${\cal G}_1(({\cal Y}\times {\cal X}^{2})_{\vec{W}})$.

\subsection{Outline of remaining parts}\Label{s2-3}
Next, we outline the results of remaining parts (Sections \ref{s6-3}-\ref{s-Fi}).
Section \ref{s8} considers the case when 
$Y_n$ and $X_n$ are conditionally independent with a fixed value $X_{n-1}=x_{n-1}$, as illustrated in 
Fig. \ref{2model}.
We call this model the conditionally independent model
while the model with Fig \ref{3model} is called the general model.
In this case,
the subspaces ${\cal N}_P(({\cal Y}\times {\cal X}^{2})_{\vec{W}})$
and
${\cal N}_{2,P}(({\cal Y}\times {\cal X}^{2})_{\vec{W}})$
can be simplified in a simpler way as Theorem \ref{FET2} in Section \ref{s8}.

We give several concrete constructions of generators
for the general model in Section \ref{s6-3} 
and 
for the conditionally independent model in Example \ref{ERT} in Section \ref{s8}.
More precise analysis for the two-hidden-state case
is done 
for the general model in Section \ref{s7} 
and 
for the conditionally independent model in Section \ref{s-Fi}.

\section{Hidden Markov model and equivalence}\Label{s5}
\subsection{Notations with ${\cal Y}$-indexed transition matrix}\Label{s5-1}
In the hidden Markov process, there is a possibility that
two different transition  matrices for hidden and observed variables
yield the same stochastic behavior of the observed variables.
Since such two transition  matrices cannot be distinguished, we need to identify them
and consider that they are equivalent, in practice.
In this section, we discuss the equivalence problem of hidden Markov process.
This subsection prepares notation for this aim.

Usually, a hidden Markov process is given as the combination of a Markov chain on a hidden finite state system ${\cal X}$
and a function of the hidden system ${\cal X}$ to a visible finite state system ${\cal Y}$ like Fig. \ref{1model}.
The paper \cite{IAK} discusses the equivalence problem of hidden Markov process in this formalism.
However, it requires a very complicated notation because it does not directly treat the set of observed values.
To avoid this problem, in this paper, we treat a hidden Markov process in a different form.
That is, we consider a collection of non-negative matrices $\vec{W}= (W_y(x|x'))_{y \in {\cal Y}}$ on the hidden system ${\cal X}$
with the condition that $\sum_{y \in {\cal Y}}W_y$ is a probability transition matrix,
where
a matrix is called non-negative when all of its matrix components are non-negative.
In this formulation, when the input is $x'$, we observe the visible outcome $y$ with probability
$\sum_{x \in {\cal X}} W_y(x|x')$.
This formalism directly expresses the behavior of observed outcomes so that the equivalence problem can be easily addressed.
Under this observation $Y=y$, the resultant distribution $P_{X|YX'}$ on ${\cal X}$
is $P_{X|YX'}(x|yx')=W_y(x|x')/\sum_{\tilde{x} \in {\cal X}} W_y(\tilde{x}|x') $.
Since the observed outcome takes values in the system ${\cal Y}$,
we call $\vec{W}$ a ${\cal Y}$-indexed transition matrix on ${\cal X}$\footnote{A ${\cal Y}$-indexed transition matrix on ${\cal X}$ can be regarded as the classical version of
measuring instrument of the quantum setting \cite{H2nd,Ozawa}, which describes the quantum measuring process.
The recent paper \cite{HY} characterizes quantum hidden Markov process by using measuring instrument.}.
When the initial distribution $P_{X_0}$ is given, 
like Fig. \ref{3model}, we have the joint distribution of the sequence 
$X_n,Y_n,X_{n-1},Y_{n-1} \ldots, X_1,Y_1,X_0$ as
\begin{align}
&P_{X_n,Y_n,X_{n-1},Y_{n-1} \ldots, X_1,Y_1,X_0}
(x_n,y_n,x_{n-1},y_{n-1} \ldots, x_1,y_1,x_0)\nonumber \\
:=&
W_{y_n}(x_n|x_{n-1})
W_{y_{n-1}}(x_{n-1}|x_{n-2}) \cdots
W_{y_1}(x_1|x_0)
P_{X_0}(x_0).
\end{align}
That is, 
$X_n$ and $Y_n$ are correlated even when $X_{n-1}$ is fixed to a value $x_{n-1}$.

When we are given a Markov process $W$ on ${\cal X}$ and a function $f:{\cal X} \to {\cal Y}$ as the conventional formalism of hidden Markov process, 
we have a disjoint partition $({\cal X}_y)_{y \in {\cal Y}}$ of ${\cal X}$ by defining ${\cal X}_y:= f^{-1}(y)$.
When we define the collection $\vec{W}= (W_y(x|x'))_{y \in {\cal Y}}$ as 
\begin{align}
W_y(x|x')=
\left\{
\begin{array}{ll}
0 & \hbox{when } x \notin  {\cal X}_y \\
W(x|x') & \hbox{when } x \in  {\cal X}_y,
\end{array}
\right.
\end{align}
the collection 
$\vec{W}$
gives a hidden Markov process under our formalism.
If  the function $f$ is one-to-one, 
$Y$ is subject to Markov process.

Conversely, once a collection $\vec{W}= (W_y(x|x'))_{y \in {\cal Y}}$ is given,
we have a hidden Markov process $W$ on ${\cal X}'$ and a function $f:{\cal X}' \to {\cal Y}$ as follows.
Define the set $\tilde{{\cal X}}:= {\cal X} \times {\cal Y}$ and the map $f$ as $f(x,y):=y$.
Then, we can define the transition matrix 
$\vec{W}_{|\tilde{{\cal X}}}$
on $\tilde{{\cal X}}={\cal X}\times {\cal Y}$ by 
\begin{align}
\vec{W}_{|\tilde{{\cal X}}}(x,y|x',y'):=W_y(x|x')
\Label{LGR},
\end{align}
which yields the joint Markov process.
The pair of $\vec{W}_{\tilde{{\cal X}}}$ and the function $f$ 
recovers the conventional formalism of hidden Markov process.
In this way, our formalism and the conventional formalism can be converted to each other.

Given a ${\cal Y}$-indexed transition matrix 
$\vec{W}= (W_y(x|x'))_{y \in {\cal Y}}$ on ${\cal X}$,
we denote the transition matrix $\sum_{y \in {\cal Y}} W_y$ on ${\cal X}$
by $|\vec{W}|$.
A ${\cal Y}$-indexed transition matrix $\vec{W}$ is called irreducible
when $|\vec{W}|$ is irreducible.
In this case, the average $ \sum_{i=1}^n \frac{1}{n} |\vec{W}|^i P$
converges to the stationary distribution $P_{\vec{W}}$ for any initial distribution $P$
as $n$ goes to infinity \cite{DZ,kemeny-snell-book}.
In the following,
for simplicity, we identify ${\cal X}$ and ${\cal Y}$
with $\{1,\ldots, d\}$ and $\{1,\ldots, d_Y\}$, respectively.
That is, $|{\cal X}|=d$ and $|{\cal Y}|=d_Y$.
Also, we assume that a ${\cal Y}$-indexed transition matrix $\vec{W}$ is irreducible.
Even in this assumption, $\vec{W}_{|\tilde{{\cal X}}}$ is not necessarily irreducible.
Hence, the distribution $P_{\vec{W}_{|\tilde{{\cal X}}}}$ is not uniquely defined.
However, when we define it as
\begin{align}
P_{\vec{W}_{|\tilde{{\cal X}}}}(x,y)
:=\sum_{x'\in {\cal X}} W_{y}(x|x') P_{\vec{W}}(x'),\Label{Eq3-7-1}
\end{align}
we have the following lemma.

\begin{lemma}
The distribution $P_{\vec{W}_{|\tilde{{\cal X}}}}$
is an eigenvector of $\vec{W}_{|\tilde{{\cal X}}}$,
i.e., an invariant distribution on the product space ${\cal X}\times {\cal Y} $
under the transition matrix $\vec{W}_{|\tilde{{\cal X}}}$.
\end{lemma}

\begin{proof}
The desired statement can be shown in the following way.
\begin{align}
&(\vec{W}_{|\tilde{{\cal X}}} P_{\vec{W}_{|\tilde{{\cal X}}}})(x,y)
=\sum_{x',y'} W_{y}(x|x') \sum_{x''} W_{y'}(x'|x'') P_{\vec{W}}(x'') \nonumber \\
=&\sum_{x'} W_{y}(x|x') \sum_{x'',y'} W_{y'}(x'|x'') P_{\vec{W}}(x'')
=\sum_{x'} W_{y}(x|x') P_{\vec{W}}(x').
\end{align}
\hfill$\Box$\end{proof}

Now, we discuss the equivalence relation for ${\cal Y}$-indexed transition matrices on ${\cal X}$.
When we focus on $k$ values $(y_k,y_{k-1}, \ldots, y_1)$ on ${\cal Y}$ subject to the process described by 
the ${\cal Y}$-indexed transition matrix $\vec{W}= (W_y(x|x'))_{y \in {\cal Y}}$ on ${\cal X}$,
the joint stochastic behavior of $(y_k,y_{k-1}, \ldots, y_1)$ and the input and output values in ${\cal X}$
is described by the ${\cal Y}^k$-indexed transition matrix 
$\vec{W}^{(k)}:=
(W_{y_k}\cdot W_{y_{k-1}}\cdots W_{y_1} (x|x'))_{(y_k,y_{k-1}, \ldots, y_1) \in {\cal Y}^k}$ on ${\cal X}$.
That is, we observe $k$ outcomes in ${\cal Y}$ subject to the transition matrix
\begin{align}
P^k[\vec{W}](y_k, \ldots, y_1|x'):=\sum_{x\in {\cal X}}W_{y_k}\cdots W_{y_1}(x|x')\Label{LEV}
\end{align}
and the initial distribution on ${\cal X}$.
For the following discussion, we employ the vector space 
${\cal V}_{{\cal X}}:= \{ v=(v_x)_{x \in {\cal X}}| v_x \in \bR\}$, 
i.e., the space ${\cal V}_{{\cal X}}$ is spanned by basis $\{e_x\}_{x \in {\cal X}}$.
Then, the transition matrix $P^k[\vec{W}] $ can be regarded as a linear map from
${\cal V}_{{\cal X}}$ to ${\cal V}_{{\cal Y}^k}={\cal V}_{{\cal Y}}^{\otimes k}$.
So, we have $\Ker P^k[\vec{W}] \subset \Ker P^{k_0}[\vec{W}]$ for $k \ge k_0$.
We denote the minimum integer $k_0$ satisfying the following condition by $k_{\vec{W}}$,
and call it the {\it minimum length} of $\vec{W}$:
\begin{align}
\cap_{k =1}^{\infty} \Ker P^k[\vec{W}]= \Ker P^{k_0}[\vec{W}],
\end{align}
where the existence of the minimum is shown in Lemma \ref{L1T}.
The dimension $d_{\vec{W}}:= \dim ({\cal V}_{{\cal X}}/ \Ker P^{k_{\vec{W}}}[\vec{W}])$ is called the {\it minimum degree} of $\vec{W}$.

In fact, when we have a redundant state in the state space ${\cal X}$,
the kernel $\Ker P^{k_0}[\vec{W}]$ is not $\{0\}$.
For example, when the $d$-th element $x_d$ has the same behavior as the 
stochastic combination of $x_1, \ldots, x_{d-1}$ with the probabilities
$p_1, \ldots, p_{d-1}$,
the vector $(p_1, \ldots, p_{d-1},-1)$
belongs to the kernel $\Ker P^{k_0}[\vec{W}]$. 
For any integer $k$, 
we can naturally define the map $P^k[W]$ from $ {\cal V}_{{\cal X}}/\Ker P^{k}[\vec{W}]$ to ${\cal V}_{{\cal Y}^k}$.
That is, the distribution of $k$ outcomes of ${\cal Y}$
depends only on the element of 
the quotient space $ {\cal V}_{{\cal X}}/\Ker P^{k_{\vec{W}}}[\vec{W}]$.

\begin{lemma}\Label{L1T}
The minimum length $k_{\vec{W}}$ of $\vec{W}$ satisfies 
\begin{align}
k_{\vec{W}} & \le d \Label{8-21-1}\\
k_{\vec{W}} & \le 1+ \max_{y\in {\cal Y}}\rank W_y . \Label{8-21-2}
\end{align}
\end{lemma}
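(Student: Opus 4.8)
The plan is to exploit the nesting $\Ker P^{k+1}[\vec{W}] \subseteq \Ker P^{k}[\vec{W}]$ together with the fact that once the chain of kernels stabilizes for one step it stabilizes forever. First I would establish the latter claim: if $\Ker P^{k+1}[\vec{W}] = \Ker P^{k}[\vec{W}]$ for some $k$, then $\Ker P^{m}[\vec{W}] = \Ker P^{k}[\vec{W}]$ for all $m \ge k$. This follows because $P^{k+1}[\vec{W}]$ factors through the $W_{y}$'s applied one more time: a vector $v$ lies in $\Ker P^{k+1}[\vec{W}]$ iff $W_{y_1} v \in \Ker P^{k}[\vec{W}]$ for every $y_1 \in {\cal Y}$ (reading off the block indexed by $y_k,\ldots,y_2$ of $P^{k+1}[\vec{W}]v$ gives $P^{k}[\vec{W}](W_{y_1}v)$). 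Hence $\Ker P^{k+1}[\vec{W}] = \{v : W_{y_1}v \in \Ker P^{k}[\vec{W}]\ \forall y_1\}$, and if $\Ker P^{k}[\vec{W}] = \Ker P^{k-1}[\vec{W}]$ then the same recursion gives $\Ker P^{k+1}[\vec{W}] = \Ker P^{k}[\vec{W}]$; induction closes the loop. This also shows the intersection $\cap_{k\ge1}\Ker P^{k}[\vec{W}]$ is actually attained at the first $k_0$ where the dimension stops dropping, proving existence of the minimum and giving $k_{\vec{W}} = $ that first stabilization index.

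For \eqref{8-21-1}: the dimensions $\dim \Ker P^{1}[\vec{W}] \le \dim \Ker P^{2}[\vec{W}] \le \cdots$ form a non-decreasing sequence of integers bounded above by $\dim {\cal V}_{{\cal X}} = d$. By the stabilization fact, the sequence must reach its final value by the time it has had $d$ chances to strictly increase — more carefully, if the kernel still strictly grows at step $k$ then it has strictly grown at every step $1,2,\ldots,k$, so $\dim \Ker P^{k}[\vec{W}] \ge k$ (since $\dim\Ker P^{0}[\vec W]=0$, interpreting $P^0$ suitably, or since $\dim \Ker P^1[\vec W]\ge 1$ once any growth occurs); hence strict growth past step $d$ is impossible, giving $k_{\vec{W}} \le d$.

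For \eqref{8-21-2}: I would sharpen the starting point of the dimension count. Using the factorization above with $k_0 = 1$, a vector $v$ is in $\Ker P^{2}[\vec{W}]$ iff $W_{y}v \in \Ker P^{1}[\vec{W}]$ for all $y$; but $\Ker P^{1}[\vec{W}] = \Ker(\sum_y \mathbf{1}^{T}W_y) = \{v : \sum_y \mathbf 1^T W_y v = 0\}$ has codimension at most $1$ (it is the kernel of a single linear functional, since $\mathbf 1^T|\vec W| = \mathbf 1^T$ is a nonzero row vector). The key observation is then that $\Im(W_y|_{\Ker P^1[\vec W]}) \subseteq \bR^{\rank W_y}$ has dimension at most $\rank W_y$, and on the quotient ${\cal V}_{{\cal X}}/\Ker P^{1}[\vec{W}]$ — which already captures one of the "coordinates" of $v$ — the further constraints imposed by $\Ker P^{2}, \Ker P^{3}, \ldots$ can only refine a space of dimension bounded by $\max_y \rank W_y$. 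Concretely, I expect to show that the quotient $\Ker P^{1}[\vec{W}] / \cap_k \Ker P^{k}[\vec{W}]$ injects into $\bigoplus_{y}\Im W_y$ restricted appropriately, or more simply that $d_{\vec W} = \dim({\cal V}_{{\cal X}}/\cap_k\Ker P^k[\vec W]) \le 1 + \dim(\text{image-side space}) \le 1 + \max_y \rank W_y$, and then rerun the step-counting argument of \eqref{8-21-1} inside this smaller quotient to conclude $k_{\vec W} \le 1 + \max_y \rank W_y$.

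The main obstacle is the bookkeeping in \eqref{8-21-2}: I need to set up the right reduced space on which the kernel chain acts after one step and verify that its dimension is genuinely controlled by $\max_y \rank W_y$ rather than by $\sum_y \rank W_y$ or $d$. The clean way is probably to observe that $P^{k+1}[\vec W]$ only depends on $v$ through the tuple $(W_y v)_{y\in{\cal Y}}$ together with $\mathbf 1^T v$, and that each $W_y v$ lives in a $\rank W_y$-dimensional space; combining this with the stabilization argument applied to the induced maps on these image spaces should yield both the degree bound $d_{\vec W} \le 1 + \max_y \rank W_y$ and the length bound simultaneously. Everything else is the standard "increasing chain of subspaces in a finite-dimensional space stabilizes, and the stabilization is hereditary" argument.
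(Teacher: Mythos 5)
Your stabilization lemma and the proof of \eqref{8-21-1} are essentially the paper's argument: the recursion $\Ker P^{k+1}[\vec{W}]=\{v: W_{y}v\in\Ker P^{k}[\vec{W}]\ \forall y\}$ gives hereditary stabilization, and a strictly monotone chain of subspaces of the $d$-dimensional space ${\cal V}_{\cal X}$ can have at most $d$ strict steps. (Note, though, that you state the monotonicity backwards in the middle of the argument: having correctly asserted the nesting $\Ker P^{k+1}[\vec{W}]\subseteq\Ker P^{k}[\vec{W}]$, you then call the dimensions ``non-decreasing'' and speak of the kernel ``growing''; the chain strictly \emph{shrinks} until it stabilizes. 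The count still yields $k_{\vec{W}}\le d$, so this is a slip rather than a gap.)

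The proof of \eqref{8-21-2} has a genuine gap. First, $\Ker P^{1}[\vec{W}]$ is not the kernel of a single functional: $P^{1}[\vec{W}]$ maps $v$ to the tuple $(\langle u_{\cal X}|W_{y}v\rangle)_{y\in{\cal Y}}$, so its kernel is the joint kernel of $d_Y$ functionals and can have codimension up to $\min(d,d_Y)$; you have effectively replaced $P^1$ by its trivial marginal $\langle u_{\cal X}|$. Second, and more seriously, your fallback claim $d_{\vec{W}}\le 1+\max_{y}\rank W_y$ is false, so the plan ``rerun the step count inside a quotient of that dimension'' cannot work. For example, with $d=4$, ${\cal Y}=\{0,1\}$ and generic non-negative $W_0,W_1$ of rank $2$ summing to a stochastic matrix, one finds $\dim\Ker P^{1}[\vec{W}]=2$ and $\Ker P^{2}[\vec{W}]=\Ker W_0\cap\Ker W_1=\{0\}$, so $d_{\vec{W}}=4>3=1+\max_y\rank W_y$ (while $k_{\vec{W}}=2$ still respects \eqref{8-21-2}). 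Your other suggestion, injecting $\Ker P^{1}[\vec{W}]/\cap_k\Ker P^{k}[\vec{W}]$ into $\bigoplus_y \Im W_y$, only yields the bound $1+\sum_y\rank W_y$ — exactly the distinction (max versus sum) you flag as the obstacle. The paper's mechanism is different: for each fixed $y$ one tracks the decreasing chain $\Im W_y\cap\Ker P^{k}[\vec{W}]$ \emph{inside} $\Im W_y$, which stabilizes after at most $\rank W_y$ strict drops; since $\Ker P^{k+1}[\vec{W}]$ is determined by the traces $\Im W_y\cap\Ker P^{k}[\vec{W}]$ for all $y$, the full kernel chain stabilizes one step after the slowest of these per-$y$ chains does, giving $k_{\vec{W}}\le 1+\max_y k_y\le 1+\max_y\rank W_y$. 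The ``$+1$'' comes from this one extra application of the $W_y$'s, not from any codimension-one property of $\Ker P^{1}[\vec{W}]$, and no single small ambient quotient is needed.
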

This lemma also shows the existence of $k_{\vec{W}}$.

\begin{proof}
\noindent{\it Step 1:\quad}
We will show the following fact;
If $\Ker P^k[\vec{W}] =\Ker P^{k+1}[\vec{W}] $,
$\Ker P^k[\vec{W}] =\Ker P^{k+l}[\vec{W}] $ for any $l \ge 0$.
We choose an arbitrary element $v \in \Ker P^{k+l}[\vec{W}]$.
For $(y_1, \ldots, y_{l-1}) \in \cY^{l-1}$, we have
$W_{y_{l-1}}\cdots W_{y_1} v \in \Ker P^{k+1}[\vec{W}]$, which implies that
$W_{y_{l-1}}\cdots W_{y_1} v \in \Ker P^{k}[\vec{W}]$.
So, $v\in \Ker P^{k+l-1}[\vec{W}]$.
Repeating this procedure, we obtain $v\in \Ker P^{k}[\vec{W}]$.

\noindent{\it Step 2:\quad}
Step 1 shows that 
$d = \dim \Ker P^0[\vec{W}] > \dim \Ker P^1[\vec{W}] > \ldots >  \dim \Ker P^{k_{\vec{W}}}[\vec{W}] \ge 0$, 
which implies \eqref{8-21-1}, i.e., 
$d \ge k_{\vec{W}}$.

\noindent{\it Step 3:\quad}
For an element $y \in \cY$, we choose the integer $k_y$ as the minimum integer $k_y$ satisfying
$\cap_{k=1}^{\infty} \Im W_y \cap \Ker P^{k}[\vec{W}] = \Im W_y \cap \Ker P^{k_y}[\vec{W}]$.
Now, we apply the above discussion to the case $\Im W_y $.
So, we have
$\dim \Im W_y = \dim \Im W_y \cap\Ker P^0[\vec{W}] > \dim \Im W_y \cap \Ker P^1[\vec{W}] > \ldots >  \dim \Im W_y \cap \Ker P^{k_y}[\vec{W}] \ge 0$,
which implies that
$\dim \Im W_y \ge k_y$.

\noindent{\it Step 4:\quad}
The relation $v \in \Ker P^{k+1}[\vec{W}]$ holds if and only if 
$W_y v \in \Ker P^{k}[\vec{W}]$ for any $y \in \cY$.
So, we have $\max_{y \in \cY} k_y +1 \ge k_{\vec{W}}$.
Combining Step 3, we have
\eqref{8-21-2}.
\hfill$\Box$\end{proof}

In the following, we regard a distribution $P$ on ${\cal X}$ as an element of $ {\cal V}_{{\cal X}}$.
Then, $[P]$ denotes an element of
the quotient space $ {\cal V}_{{\cal X}}/\Ker P^{k_{\vec{W}}}[\vec{W}]$
whose representative is $P$.
Given a distribution $P$ on $\cX$ and a positive integer $k$, 
we define the subspace ${\cal V}^k(P)$ of 
${\cal V}_{{\cal X}}/ \Ker P^{k_{\vec{W}}}[\vec{W}]$
spanned by $\{ [W_{y_{k'}}\cdots W_{y_1}P] | y_j \in \cY, k'\le k\}$.
We denote the minimum integer $k_0$ satisfying the following condition by $k_{(P,\vec{W})}$,
and call it the {\it minimum length} of $(P,\vec{W})$:
\begin{align}
\cup_{k =1}^{\infty} {\cal V}^k(P) = {\cal V}^{k_0}(P),
\end{align}
where the existence of the minimum is shown in Lemma \ref{L2T}.

\begin{lemma}
The space ${\cal V}^{k}(P_{\vec{W}})$
is spanned by $\big\{ [W_{y_k}\cdots W_{y_1}P_{\vec{W}}] \big| y_j \in \cY\big\}$.
\end{lemma}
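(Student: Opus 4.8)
The plan is to use the one property that sets $P_{\vec{W}}$ apart from a general distribution on $\cX$, namely that it is the stationary distribution of $|\vec{W}|=\sum_{y\in\cY}W_y$, so that $\sum_{y\in\cY}W_yP_{\vec{W}}=P_{\vec{W}}$. Since by definition ${\cal V}^k(P_{\vec{W}})$ is spanned by all classes $[W_{y_{k'}}\cdots W_{y_1}P_{\vec{W}}]$ with $k'\le k$, the inclusion of $\{[W_{y_k}\cdots W_{y_1}P_{\vec{W}}]\mid y_j\in\cY\}$'s span into ${\cal V}^k(P_{\vec{W}})$ is trivial, and the actual work is to show that every such generator with $k'<k$ already lies in the span of the length-$k$ generators.

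The key step I would record is that, for any word $(y_1,\dots,y_{k'})$, inserting $\sum_{y_0}W_{y_0}$ in front of $P_{\vec{W}}$ and using stationarity gives
\begin{align*}
W_{y_{k'}}\cdots W_{y_1}P_{\vec{W}}
=\sum_{y_0\in\cY}W_{y_{k'}}\cdots W_{y_1}W_{y_0}P_{\vec{W}},
\end{align*}
hence, after passing to the quotient ${\cal V}_{\cX}/\Ker P^{k_{\vec{W}}}[\vec{W}]$,
\begin{align*}
[W_{y_{k'}}\cdots W_{y_1}P_{\vec{W}}]
=\sum_{y_0\in\cY}[W_{y_{k'}}\cdots W_{y_1}W_{y_0}P_{\vec{W}}].
\end{align*}
This expresses a generator of word length $k'$ as a linear combination of generators of word length $k'+1$.

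I would then finish by a finite downward induction on $k-k'$: iterating the identity above, any generator $[W_{y_{k'}}\cdots W_{y_1}P_{\vec{W}}]$ with $k'\le k$ is a linear combination of generators of the form $[W_{y_k}\cdots W_{y_1}P_{\vec{W}}]$, so the defining spanning set of ${\cal V}^k(P_{\vec{W}})$ sits inside the span of $\{[W_{y_k}\cdots W_{y_1}P_{\vec{W}}]\mid y_j\in\cY\}$, which gives the reverse inclusion and hence the claimed equality. I do not expect any real obstacle here; the only points requiring a little care are that the rewriting must be performed inside the quotient space ${\cal V}_{\cX}/\Ker P^{k_{\vec{W}}}[\vec{W}]$ (so that the equalities are equalities of classes), and that each application of the identity raises the word length by exactly one while keeping it $\le k$, which is precisely what lets the induction terminate at length $k$.
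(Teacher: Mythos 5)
Your proposal is correct and follows essentially the same route as the paper: both rest on the stationarity identity $\sum_{y}W_{y}P_{\vec{W}}=P_{\vec{W}}$ to rewrite a length-$k'$ generator as a sum of length-$(k'+1)$ generators in the quotient space. The only difference is that you make the downward induction on $k-k'$ explicit, whereas the paper records just the single step from length $k-1$ to length $k$ and leaves the iteration implicit.
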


\begin{proof}
It is enough to show that 
an element $[W_{y_{k-1}}\cdots W_{y_1}P_{\vec{W}}]$
is written as a linear combination of
$\{ [W_{y_k}\cdots W_{y_1}P_{\vec{W}}] | y_j \in \cY\}$.
Since $\sum_y W_{y}P_{\vec{W}} =P_{\vec{W}}$,  
we have $\sum_{y}[W_{y_{k-1}}\cdots W_{y_1} W_y P_{\vec{W}}]
=[W_{y_{k-1}}\cdots W_{y_1}P_{\vec{W}}]$.
\hfill$\Box$\end{proof}

\begin{lemma}\Label{L2T}
The minimum length $k_{(P,\vec{W})}$ of $\vec{W}$ exists and satisfies 
\begin{align}
k_{(P,\vec{W})} & \le d-\dim \Ker P^{k_{\vec{W}}}[\vec{W}]
 \Label{8-21-3}\\
k_{(P,\vec{W})} & \le 1+ \max_{y\in {\cal Y}} \dim  (\Im W_y/ \Ker P^{k_{\vec{W}}}[\vec{W}]) . \Label{8-21-4}
\end{align}
\end{lemma}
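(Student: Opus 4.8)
The plan is to transport, in dual form, the argument that proves Lemma~\ref{L1T}. Write $\overline{{\cal V}}:={\cal V}_{{\cal X}}/\Ker P^{k_{\vec W}}[\vec W]$ and $\bar d:=\dim\overline{{\cal V}}=d-\dim\Ker P^{k_{\vec W}}[\vec W]$. First I would note that each $W_y$ descends to a linear map $\bar W_y$ on $\overline{{\cal V}}$: indeed $\Ker P^{k_{\vec W}}[\vec W]=\bigcap_{k\ge1}\Ker P^{k}[\vec W]$, and since $v\in\Ker P^{k+1}[\vec W]$ holds iff $W_yv\in\Ker P^{k}[\vec W]$ for all $y$ (Step~4 of the proof of Lemma~\ref{L1T}), this intersection is invariant under every $W_y$. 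Thus $\{{\cal V}^{k}(P)\}_{k}$ is an increasing chain of subspaces of $\overline{{\cal V}}$, with $\bar W_y{\cal V}^{k}(P)\subseteq\overline{\Im W_y}$, where $\overline{\Im W_y}$ is the image of $\Im W_y$ in $\overline{{\cal V}}$ (of dimension $\dim(\Im W_y/\Ker P^{k_{\vec W}}[\vec W])$, reading the quotient that way), and from the defining span one gets the recursion
\begin{align}
{\cal V}^{k+1}(P)={\cal V}^{1}(P)+\sum_{y\in\cY}\bar W_y{\cal V}^{k}(P).
\end{align}

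Existence and \eqref{8-21-3}: the recursion shows that ${\cal V}^{k}(P)={\cal V}^{k+1}(P)$ implies ${\cal V}^{k+1}(P)={\cal V}^{k+2}(P)$, so the chain is constant from its first repetition onward; since $\overline{{\cal V}}$ is finite dimensional a repetition occurs, and $k_{(P,\vec W)}$ is that first index, which establishes existence. By minimality of $k_{(P,\vec W)}$ the inclusion ${\cal V}^{k}(P)\subseteq{\cal V}^{k+1}(P)$ is strict for $k<k_{(P,\vec W)}$; moreover ${\cal V}^{1}(P)\neq\{0\}$ because it contains $\sum_{y}[W_yP]=[\,|\vec W|P\,]$ and $|\vec W|P$, being a probability distribution, lies outside $\Ker P^{k_{\vec W}}[\vec W]\subseteq\Ker P^{1}[\vec W]$. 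Hence $1\le\dim{\cal V}^{1}(P)<\dim{\cal V}^{2}(P)<\dots<\dim{\cal V}^{k_{(P,\vec W)}}(P)\le\bar d$, which gives $k_{(P,\vec W)}\le\dim{\cal V}^{k_{(P,\vec W)}}(P)\le d-\dim\Ker P^{k_{\vec W}}[\vec W]$, i.e.\ \eqref{8-21-3}.

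Bound \eqref{8-21-4}: this is the analogue of Steps~3--4 of Lemma~\ref{L1T}. For each $y$ consider the increasing chain $\bar W_y{\cal V}^{k}(P)\subseteq\overline{\Im W_y}$ and let $k_y^{P}$ be the first index at which it equals $\bigcup_{k}\bar W_y{\cal V}^{k}(P)$. As in Step~3 of Lemma~\ref{L1T} one argues that $\dim\bar W_y{\cal V}^{k}(P)$ increases strictly until $k=k_y^{P}$ and is constant afterwards, so $k_y^{P}\le\dim\overline{\Im W_y}=\dim(\Im W_y/\Ker P^{k_{\vec W}}[\vec W])$. Put $k^{\ast}:=\max_{y}k_y^{P}$. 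For $k\ge k^{\ast}$ every $\bar W_y{\cal V}^{k}(P)=\bar W_y{\cal V}^{k+1}(P)$, so the recursion gives ${\cal V}^{k+2}(P)={\cal V}^{1}(P)+\sum_{y}\bar W_y{\cal V}^{k+1}(P)={\cal V}^{1}(P)+\sum_{y}\bar W_y{\cal V}^{k}(P)={\cal V}^{k+1}(P)$; therefore $k_{(P,\vec W)}\le k^{\ast}+1\le 1+\max_{y}\dim(\Im W_y/\Ker P^{k_{\vec W}}[\vec W])$, which is \eqref{8-21-4}.

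The step I expect to be the real obstacle is the claim used in the last paragraph that for each fixed $y$ the chain $\bar W_y{\cal V}^{k}(P)$ keeps growing until it stabilizes — equivalently, that once $\bar W_y{\cal V}^{k}(P)=\bar W_y{\cal V}^{k+1}(P)$ this persists. Unlike the painless persistence of the full chain ${\cal V}^{k}(P)$ obtained from the recursion, a single factor $\bar W_y$ only controls ${\cal V}^{k}(P)$ modulo $\Ker\bar W_y$, so this reproduces exactly the delicate point already present in Step~3 of the proof of Lemma~\ref{L1T}; resolving it cleanly (for instance by first establishing the simultaneous-in-$y$ version of persistence and then bounding, inside each $\overline{\Im W_y}$, the index at which the contribution $\bar W_y{\cal V}^{\bullet}(P)$ stops growing) is where the careful work lies.
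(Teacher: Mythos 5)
Your argument follows the same route as the paper's own proof: persistence of the full chain $\{{\cal V}^k(P)\}_k$ plus a strictly increasing dimension count for \eqref{8-21-3}, and then a per-$y$ stabilization index for the chain $[W_y]{\cal V}^k(P)$ inside $\Im W_y/\Ker P^{k_{\vec{W}}}[\vec{W}]$, combined with the fact that ${\cal V}^{k+1}(P)$ is generated by ${\cal V}^1(P)$ together with the spaces $[W_y]{\cal V}^k(P)$, for \eqref{8-21-4}. The existence claim and \eqref{8-21-3} are complete and correct, and your explicit recursion ${\cal V}^{k+1}(P)={\cal V}^1(P)+\sum_{y}[W_y]{\cal V}^k(P)$ is a slightly cleaner packaging of the paper's Steps 1--2.

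For \eqref{8-21-4}, however, your proposal stops exactly where the work is needed, and you say so yourself: you do not prove that, for a fixed $y$, the chain $[W_y]{\cal V}^k(P)$ grows strictly at every index below its stabilization index $k_y^P$ (equivalently, that $[W_y]{\cal V}^k(P)=[W_y]{\cal V}^{k+1}(P)$ at one index forces equality at all later indices). This is a genuine gap: applying $[W_y]$ to the recursion for ${\cal V}^{k+2}(P)$ produces terms $[W_y][W_{y'}]{\cal V}^{k+1}(P)$ for every $y'$, while the hypothesis $[W_y]{\cal V}^{k+1}(P)=[W_y]{\cal V}^{k}(P)$ only controls ${\cal V}^{k+1}(P)$ modulo $\Ker[W_y]$, which says nothing about the action of $[W_y][W_{y'}]$ on the discrepancy; so single-$y$ persistence does not follow from the same mechanism that handles the full chain. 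Without strict growth at every step, the dimension count only certifies one strict increase of $\sum_y\dim[W_y]{\cal V}^k(P)$ per index, which degrades the bound to $1+\sum_{y}\dim(\Im W_y/\Ker P^{k_{\vec{W}}}[\vec{W}])$ rather than the stated maximum. You should know that the paper's own proof does not supply the missing argument either: its Step 3 simply says ``replacing ${\cal V}^k(P)$ by $[W_y]{\cal V}^k(P)$ in the above discussion,'' which is precisely the unjustified transfer you flag, and the same remark applies to Step 3 of the proof of Lemma \ref{L1T}. So you have correctly located the delicate point, but as submitted your proposal does not establish \eqref{8-21-4}; closing it requires an argument beyond what appears in either your write-up or the paper.
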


\begin{proof}
\noindent{\it Step 1:\quad}
We show the following fact;
If ${\cal V}^k(P) ={\cal V}^{k+1}(P) $,
${\cal V}^k(P) ={\cal V}^{k+l}(P) $ for any $l \ge 0$.
We choose an arbitrary element $v \in {\cal V}^{k+l}(P)$.
Choose an element $[W_{y_{k+l}}\cdots W_{y_1}P]$.
Due to the assumption, we have an element 
$[W_{y_{k+1}}\cdots W_{y_1}P]
=\sum_{\vec{y} \in \cup_{k'=1}^k \cY^{k'}} c_{\vec{y}}[W_{\vec{y}}P]$.
So, $$[W_{y_{k+l}}\cdots W_{y_1}P]
=\sum_{\vec{y} \in \cup_{k'=1}^k \cY^{k'}}
c_{\vec{y}}[
W_{y_{k+l}}\cdots W_{y_{k+2}} W_{\vec{y}}P]
\in {\cal V}^{k+l-1}(P).$$
Repeating this procedure, we have 
${\cal V}^k(P) ={\cal V}^{k+l}(P) $.

\noindent{\it Step 2:\quad}
Step 1 shows that 
\begin{align*}
& \dim {\cal V}_{\cX}/\Ker P^{k_{\vec{W}}}[\vec{W}]
\ge \dim {\cal V}^{k_{(P,\vec{W})}}(P) \\
> &
\dim {\cal V}^{k_{(P,\vec{W})}-1}(P) >\ldots >
\dim {\cal V}^{1}(P) >\dim {\cal V}^{0}(P)=1,
\end{align*}
which implies \eqref{8-21-3}, i.e., 
$\dim {\cal V}_{\cX}/\Ker P^{k_{\vec{W}}}[\vec{W}] \ge k_{(P,\vec{W})}$.

\noindent{\it Step 3:\quad}
For an element $y \in \cY$, we choose the integer $k_y'$ as the minimum integer $k_y'$ satisfying
$\cup_{k =1}^{\infty} [W_y]{\cal V}^k(P) = [W_y] {\cal V}^{k_y'}(P)$.
Replacing ${\cal V}^k(P)$
by $[W_y]{\cal V}^k(P)$ in the above discussion,
we have
$\dim \Im W_y/\Ker P^{k_{\vec{W}}}[\vec{W}] \ge k_{y}'$.

\noindent{\it Step 4:\quad}
We have $k_{(P,\vec{W})} \le 1+ \max_{y \in \cY} k_{y}'$.
Combining Step 3, we have
\eqref{8-21-4}.
\hfill$\Box$\end{proof}

\begin{lemma}\Label{L-1-25}
The relation
\begin{align}
W_y \Ker P^{k_{\vec{W}}}[\vec{W}]
\subset \Ker P^{k_{\vec{W}}}[\vec{W}]\Label{2-8-9}
\end{align}
holds for $y \in {\cal Y}$.
\end{lemma}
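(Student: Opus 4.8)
The plan is to derive \eqref{2-8-9} directly from two ingredients that already appear in the proof of Lemma~\ref{L1T}, without any new computation. The first ingredient is that the kernel sequence has stabilized at index $k_{\vec{W}}$: by the very definition of $k_{\vec{W}}$ we have $\cap_{k\ge 1}\Ker P^k[\vec{W}] = \Ker P^{k_{\vec{W}}}[\vec{W}]$, and since the spaces $\Ker P^k[\vec{W}]$ are decreasing in $k$, this forces $\Ker P^{k_{\vec{W}}}[\vec{W}] = \Ker P^{k_{\vec{W}}+1}[\vec{W}]$ (Step~1 of the proof of Lemma~\ref{L1T} moreover shows the sequence is constant for all larger indices, but only this one step is needed here).

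The second ingredient is the recursive characterization used in Step~4 of the proof of Lemma~\ref{L1T}: from the definition \eqref{LEV} one has the identity $(P^{k+1}[\vec{W}]v)(y_{k+1},y_k,\ldots,y_1) = (P^{k}[\vec{W}](W_{y_{k+1}}v))(y_k,\ldots,y_1)$, so $v \in \Ker P^{k+1}[\vec{W}]$ if and only if $W_y v \in \Ker P^{k}[\vec{W}]$ for every $y \in {\cal Y}$.

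Combining these, I would argue as follows: let $v \in \Ker P^{k_{\vec{W}}}[\vec{W}]$. By the stabilization this means $v \in \Ker P^{k_{\vec{W}}+1}[\vec{W}]$, and applying the recursive characterization with $k = k_{\vec{W}}$ yields $W_y v \in \Ker P^{k_{\vec{W}}}[\vec{W}]$ for all $y \in {\cal Y}$. Since $v$ was arbitrary, this is exactly the claimed inclusion $W_y \Ker P^{k_{\vec{W}}}[\vec{W}] \subset \Ker P^{k_{\vec{W}}}[\vec{W}]$. There is no genuine obstacle in this argument; the only point requiring care is the index bookkeeping — ensuring that the stabilization is invoked to pass from $k_{\vec{W}}$ to $k_{\vec{W}}+1$, and that the recursive characterization is then applied at the correct level $k = k_{\vec{W}}$ — and citing the relevant steps of the earlier proof rather than re-deriving them.
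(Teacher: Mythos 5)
Your proof is correct and is essentially the paper's own argument: the paper establishes the same inclusion by contradiction (if $W_y v \notin \Ker P^{k_{\vec{W}}}[\vec{W}]$ then $P^{k_{\vec{W}}}[\vec{W}]W_y v \neq 0$, hence $P^{k_{\vec{W}}+1}[\vec{W}]v \neq 0$, contradicting the stabilization of the kernel sequence at $k_{\vec{W}}$), which is just the contrapositive of your direct derivation from the same two ingredients. One small correction to your displayed identity: it is the rightmost factor of $W_{y_{k+1}}\cdots W_{y_1}$ that acts on $v$ first, so the correct form is $(P^{k+1}[\vec{W}]v)(y_{k+1},\ldots,y_1) = (P^{k}[\vec{W}](W_{y_1}v))(y_{k+1},\ldots,y_2)$ rather than pulling out $W_{y_{k+1}}$; the resulting characterization that $v\in\Ker P^{k+1}[\vec{W}]$ if and only if $W_y v\in\Ker P^{k}[\vec{W}]$ for all $y\in{\cal Y}$ is unchanged, so your argument goes through.
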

Due to Lemma \ref{L-1-25}, 
using $W_y$, we can define the linear map $[W_y]$ on  
the quotient space $ {\cal V}_{{\cal X}}/\Ker P^{k_{\vec{W}}}[\vec{W}]$.
We also define $d_{(P,\vec{W})}:= \dim {\cal V}^{k_{(P,\vec{W})}}(P)$.
Hence, the definition of ${\cal V}^{k_{(P,\vec{W})}}(P)$
and Lemma \ref{L-1-25} imply the following lemma.
\begin{lemma}
The following relation holds;
\begin{align}
[W_y] {\cal V}^{k_{(P,\vec{W})}}(P) \subset 
{\cal V}^{k_{(P,\vec{W})}}(P).\Label{2-8-10B}
\end{align}
\end{lemma}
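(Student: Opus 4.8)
The plan is to unwind the definitions and exploit the stabilization property already established in Step~1 of the proof of Lemma~\ref{L2T}. First I would note that the spaces ${\cal V}^k(P)$ form an increasing chain whose union is ${\cal V}^{k_{(P,\vec{W})}}(P)$; in particular ${\cal V}^{k_{(P,\vec{W})}+1}(P) \subseteq \cup_{k=1}^\infty {\cal V}^k(P) = {\cal V}^{k_{(P,\vec{W})}}(P)$, and combined with monotonicity this forces
\[
{\cal V}^{k_{(P,\vec{W})}+1}(P) = {\cal V}^{k_{(P,\vec{W})}}(P).
\]

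Next, recall that ${\cal V}^{k_{(P,\vec{W})}}(P)$ is spanned by the vectors $[W_{y_{k'}}\cdots W_{y_1}P]$ with $k' \le k_{(P,\vec{W})}$ and $y_j \in \cY$. The map $[W_y]$ is well defined on the quotient space ${\cal V}_{{\cal X}}/\Ker P^{k_{\vec{W}}}[\vec{W}]$ precisely because of Lemma~\ref{L-1-25}, and applying it to such a generator gives $[W_y W_{y_{k'}}\cdots W_{y_1}P]$, a product of at most $k_{(P,\vec{W})}+1$ of the matrices $W_{(\cdot)}$ applied to $P$, hence an element of ${\cal V}^{k_{(P,\vec{W})}+1}(P)$. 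By linearity, $[W_y]$ carries all of ${\cal V}^{k_{(P,\vec{W})}}(P)$ into ${\cal V}^{k_{(P,\vec{W})}+1}(P)$.

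Combining the two observations yields
\[
[W_y]{\cal V}^{k_{(P,\vec{W})}}(P) \subseteq {\cal V}^{k_{(P,\vec{W})}+1}(P) = {\cal V}^{k_{(P,\vec{W})}}(P),
\]
which is the assertion \eqref{2-8-10B}. There is no serious obstacle here: the argument is essentially bookkeeping on word lengths in the generators, and the only subtlety—that $[W_y]$ is genuinely a well-defined operator on the quotient, so that the displayed inclusions make sense—has already been dealt with in Lemma~\ref{L-1-25}. I would simply be careful to track that each application of $[W_y]$ increases the word length by exactly one, so that the image lands in ${\cal V}^{k_{(P,\vec{W})}+1}(P)$ and not in some larger space.
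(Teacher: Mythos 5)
Your argument is correct and is exactly the reasoning the paper leaves implicit when it states that ``the definition of ${\cal V}^{k_{(P,\vec{W})}}(P)$ and Lemma \ref{L-1-25} imply the following lemma'': applying $[W_y]$ to a word generator lengthens the word by one, landing in ${\cal V}^{k_{(P,\vec{W})}+1}(P)$, which equals ${\cal V}^{k_{(P,\vec{W})}}(P)$ by the stabilization defining $k_{(P,\vec{W})}$. You have simply written out the bookkeeping the paper omits, so there is nothing further to add.
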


\begin{proofof}{Lemma \ref{L-1-25}}
Assume that there exist elements $v \in \Ker P^{k_{\vec{W}}}[\vec{W}]$
and $y \in {\cal Y}$ such that
$ W_y v $ does not belong to $\Ker P^{k_{\vec{W}}}[\vec{W}]$.
Hence, 
$P^{k_{\vec{W}}}[\vec{W}] W_y v$ is not $0$.
Thus, $P^{k_{\vec{W}}+1}[\vec{W}] v$ is not $0$, which contradicts the assumption.
That is, we obtain \eqref{2-8-9}.
\end{proofof}

\begin{lemma}\Label{2-15-A}
The relations
$\Ker P^{k_{\vec{W}}}[\vec{W}]=\{0\}$ and ${\cal V}^{k_{P,\vec{W}}}(P)={\cal V}_{{\cal X}}$
hold almost everywhere with respect to $\vec{W}$ and $P$.
Also,
the relations
$\Ker P^{k_{\vec{W}}}[\vec{W}]=\{0\}$ and ${\cal V}^{k_{P,\vec{W}}}(P_{\vec{W}})={\cal V}_{{\cal X}}$
hold almost everywhere with respect to $\vec{W}$.
\end{lemma}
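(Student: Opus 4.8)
The plan is to recognise each of the three assertions as a Zariski-open (hence generic) condition on the relevant parameter space. Once the conditions are rephrased as rank conditions on matrices whose entries are polynomials --- or, in the $P_{\vec W}$ case, rational functions --- in the parameters, their failure loci are proper algebraic subsets and therefore Lebesgue-null, provided they are not the whole space; and to see they are not the whole space it suffices to produce a single irreducible witness at which all the conditions hold.

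First I would eliminate the dependence of $k_{\vec W}$ and $k_{(P,\vec W)}$ on the parameters. Since $k_{\vec W}\le d$ by Lemma \ref{L1T} and the kernels $\Ker P^{k}[\vec W]$ are nonincreasing in $k$, we have $\Ker P^{k_{\vec W}}[\vec W]=\Ker P^{d}[\vec W]$; dualising, $\Ker P^{d}[\vec W]=\{0\}$ means exactly that the row vectors $\mathbf{1}^{T}W_{y_k}\cdots W_{y_1}$, over all words $(y_k,\dots,y_1)$ with $k\le d$, span $(\bR^{d})^{*}$, i.e.\ that the matrix they form has rank $d$. Likewise, by Lemma \ref{L2T} we have $k_{(P,\vec W)}\le d$, so wherever $\Ker P^{k_{\vec W}}[\vec W]=\{0\}$ the quotient is trivial and ${\cal V}^{k_{(P,\vec W)}}(P)={\cal V}^{d}(P)=\mathrm{span}\{W_{y_k}\cdots W_{y_1}P:k\le d\}$; thus the relation ${\cal V}^{k_{(P,\vec W)}}(P)={\cal V}_{\cX}$ becomes the rank condition that these $d$-vectors span $\bR^{d}$. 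Hence the conjunction ``$\Ker P^{k_{\vec W}}[\vec W]=\{0\}$ and ${\cal V}^{k_{(P,\vec W)}}(P)={\cal V}_{\cX}$'' is equivalent to the simultaneous non-vanishing of one $d\times d$ minor of each of two matrices whose entries are polynomials in the entries of $\vec W$ and $P$, and its failure locus is Zariski-closed in the affine hull of the polytope of ${\cal Y}$-valued transition matrices times the simplex on $\cX$. For the third assertion one further uses that, on the Zariski-open full-measure locus where $|\vec W|$ is irreducible, Perron--Frobenius gives $\rank(I-|\vec W|)=d-1$, so Cramer's rule exhibits $P_{\vec W}$ as a vector of ratios of $(d-1)\times(d-1)$ minors of $I-|\vec W|$; substituting this into the second rank condition and clearing denominators again produces a Zariski-closed failure locus in $\vec W$.

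It then remains to exhibit one witness. I would take $\cX=\bZ/d$, let $C$ denote the cyclic shift $C(x|x')=\delta_{x,x'+1}$, put $W_1(x|x'):=\delta_{x,1}\delta_{x',0}$ and $W_0:=C-W_1$ (which is nonnegative), and set all remaining $W_y:=0$ (this presumes $d_Y\ge 2$; if $d_Y=1$ then $\mathbf{1}^{T}|\vec W|=\mathbf{1}^{T}$ makes the first relation fail once $d\ge 2$, so that case must be excluded). Then $|\vec W|=C$ is irreducible with uniform stationary distribution. One computes $\mathbf{1}^{T}W_1=e_0^{T}$; since the span $U$ of all the row vectors $\mathbf{1}^{T}W_{y_k}\cdots W_{y_1}$ is invariant under right multiplication by $W_0$ and $W_1$, hence by $W_0+W_1=C$, it contains $e_0^{T}C^{j}=e_{-j}^{T}$ for every $j$, so $U=(\bR^{d})^{*}$ and $\Ker P^{k_{\vec W}}[\vec W]=\{0\}$. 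Similarly $W_1P=P(0)\,e_1$ and $\{W_0^{\,j}e_1:0\le j\le d-1\}$ is the full standard basis, so $\{W_{\vec y}P:|\vec y|\le d\}$ spans $\bR^{d}$ whenever $P(0)>0$; this holds for $P$ uniform (in particular for $P=P_{\vec W}$) and for generic $P$, giving ${\cal V}^{k_{(P,\vec W)}}(P)={\cal V}_{\cX}$. Hence all three failure loci are proper, so all three are Lebesgue-null, which is the claim.

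The main obstacle I anticipate is not any single computation but the measure-theoretic bookkeeping: pinning down the ambient space (the polytope of ${\cal Y}$-valued transition matrices, sitting full-dimensionally in the affine subspace cut out by the $d$ column-sum constraints, together with the simplex on $\cX$), checking that a nonempty Zariski-open subset of this space carries full Lebesgue measure and that restricting to irreducible $\vec W$ discards only a null set, and --- for the $P_{\vec W}$ statement --- keeping the denominator coming from the stationary-distribution map away from the residual proper algebraic set on which Cramer's rule degenerates. Once the reformulation as rank conditions is in place and the cyclic-shift witness has been verified, the remaining arguments are routine.
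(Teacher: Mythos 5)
Your proposal is correct, and it reaches the conclusion by a route that is related to, but more explicit than, the paper's. The paper's proof also rests on genericity of algebraic conditions, but it argues fiber-wise: it fixes $|\vec{W}|$ and $P$, lets a single $W_y$ vary subject to $0\le W_y\le|\vec{W}|$, and asserts that the Krylov vectors $|u_{\cal X}\rangle, W_y^T|u_{\cal X}\rangle,\ldots,(W_y^{d-1})^T|u_{\cal X}\rangle$ (resp.\ $P, W_yP,\ldots,W_y^{d-1}P$) are linearly independent for almost every such $W_y$, which immediately gives $\Ker P^{d-1}[\vec{W}]=\{0\}$ and ${\cal V}^{d-1}(P)={\cal V}_{\cal X}$; it does not exhibit a point where this independence actually holds. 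Your proof instead works globally on the parameter space: you use Lemmas \ref{L1T} and \ref{L2T} to freeze the exponents at $d$, recast both conditions as the non-vanishing of $d\times d$ minors of polynomial (or, via Cramer's rule for $P_{\vec{W}}$, rational) matrices, and then supply the one thing the paper leaves implicit, namely an explicit witness (the cyclic shift with $W_1=e_1e_0^T$, $W_0=C-W_1$) at which all three conditions hold. What the paper's version buys is brevity and a slightly stronger fiber-wise statement (a.e.\ in $W_y$ for each fixed $|\vec{W}|$ and $P$) --- at the cost of an unproved genericity assertion whose validity for every irreducible support pattern of $|\vec{W}|$ is not entirely obvious; what your version buys is a complete, checkable argument, including the correct observation that the degenerate case $d_Y=1$, $d\ge 2$ must be excluded (as it is implicitly throughout the paper). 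The measure-theoretic bookkeeping you flag (full-dimensionality of the polytope in the affine subspace cut out by the normalization constraints, and nullity of the reducible locus) is routine and poses no obstacle.
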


\begin{proof}
We show the first desired statement when $|\vec{W}|$ and $P$ are fixed, which is sufficient for both statements.
For an element $y\in {\cal Y}$, 
we can choose $W_y$ freely with the constraint that
$ |\vec{W}|- W_y $ is a positive matrix.
We choose $k$ to be $d-1$.
Since $W_y$ is freely chosen, 
the $d$ vectors
$|u_{\cal X}\rangle , W_y^T |u_{\cal X}\rangle  \ldots, (W_y^k)^T |u_{\cal X}\rangle $
are linearly independent almost everywhere with respect to the choice of $W_y$.
In this case, we have $\Ker P^{k}[\vec{W}]=\{0\}$, which implies the relation
$\Ker P^{k_{\vec{W}}}[\vec{W}]=\{0\}$.
Similarly,
the vectors $P, W_y P \ldots, W_y^k P$ are linear independent almost everywhere with respect to the choice of $W_y$.
In this case, 
since  ${\cal V}^{k}(P)$ contains $P, W_y P, W_y^2 P, \ldots, W_y^k P$
and $\Ker P^{k_{\vec{W}}}[\vec{W}]=\{0\}$,
we have ${\cal V}^{k}(P)={\cal V}_{{\cal X}}$, which implies that
${\cal V}^{k_{P,\vec{W}}}(P)={\cal V}_{{\cal X}}$.
\hfill$\Box$\end{proof}

\begin{remark}
The major part of this section is the reformulation of the result in \cite{IAK}.
Hence, some of the obtained statements are essentially given in \cite{IAK}.
For example, a statement similar to Lemma \ref{L2T} are given as \cite[Lemma 3]{IAK}.
Since \cite[Lemma 3]{IAK} shows that $k_{(P,\vec{W})} \le d$, but essentially shows \eqref{8-21-3}
while their formulation is different from ours.
However, they did not show \eqref{8-21-4}.
Since the paper \cite{IAK} did not consider $k_{P}$, Lemma \ref{L1T} is novel.
\end{remark}

\subsection{Equivalence relation}
In this subsection, we consider how to distinguish the pair of a ${\cal Y}$-indexed transition matrix $\vec{W}$ on ${\cal X}$
and a distribution $P$ on ${\cal X}$
from another pair of a ${\cal Y}$-indexed transition matrix $\vec{W}'$ on ${\cal X}'$
and a distribution $P'$ on ${\cal X}'$ from observed outcomes.
We say that the pair of $\vec{W}$ and $P$ is equivalent to the pair of $\vec{W}'$ and $P'$
when $P^{k}[\vec{W}] \cdot P=P^{k}[\vec{W}'] \cdot P'$ for any integer $k$.
Then, we obtain the following theorem as the refined version of Theorem \ref{sum-L-9-27}.

\begin{theorem}\Label{L-9-27}
The following conditions for 
two collections of non-negative matrices $\vec{W}$, $\vec{W}'$ 
and
two distributions $P$, $P'$ on $\cX$ 
are equivalent.
\begin{description}
\item[\bf (A1)]
There exists an invertible map $T$
from ${\cal V}^{k_{(P,\vec{W})}}(P)$
to ${\cal V}^{k_{(P',\vec{W}')}}(P')$ such that
the relation $T [W_y]  =[W_y'] T$ 
holds for $y\in {\cal Y}$ 
and the equation $T [P]=[P']$ holds.

\item[\bf (A1)']
{\bf (A1)} holds, and 
the relations $k_{\vec{W}}=k_{\vec{W}'}$,
$k_{(P,\vec{W})}=k_{(P',\vec{W}')}$, and $d_{(P,\vec{W})}=d_{(P',\vec{W}')}$
hold.

\item[\bf (A2)]
The pair of $\vec{W}$ and $P$ is equivalent to the pair of $\vec{W}'$ and $P'$.

\item[\bf (A3)]
The relations 
$k_{\vec{W}}=k_{\vec{W}'}$,
$k_{(P,\vec{W})}=k_{(P',\vec{W}')}$,
and
$P^{k_{\vec{W}}+k_{(P,\vec{W})}+1}[\vec{W}] \cdot P
=P^{k_{\vec{W}}+k_{(P',\vec{W}')}+1}[\vec{W}'] \cdot P'$ 
hold.

\item[\bf (A4)]
The relation 
$P^{k}[\vec{W}] \cdot P
=P^{k}[\vec{W}'] \cdot P'$ 
holds for $k=\max (k_{\vec{W}},k_{\vec{W}'})
+\max(k_{(P,\vec{W})},k_{(P',\vec{W}')})+1$.

\end{description}
Here, $[P]$ denotes an element of
the quotient space $ {\cal V}_{{\cal X}}/\Ker P^{k_{\vec{W}}}[\vec{W}]$
whose representative is $P$,
and $[W_y]$
denotes the linear map on 
the quotient space $ {\cal V}_{{\cal X}}/\Ker P^{k_{\vec{W}}}[\vec{W}]$
that is defined from $W_y$.
\end{theorem}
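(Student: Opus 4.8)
\emph{Proof strategy.}
I read a pair $(\vec{W},P)$ as a conservative weighted automaton over the alphabet ${\cal Y}$: to a word $\vec{y}=(y_k,\dots,y_1)$ it assigns $W_{\vec{y}}:=W_{y_k}\cdots W_{y_1}$, and what $P^{k}[\vec{W}]\cdot P$ records is exactly the generated function $F_{\vec{W},P}(\vec{y}):=u_{{\cal X}}^{T}W_{\vec{y}}P$ with $u_{{\cal X}}=(1,\dots,1)$; thus condition {\bf (2)} says $F_{\vec{W},P}=F_{\vec{W}',P'}$. Two preliminary facts are used throughout. First, since $\sum_{y}W_{y}=|\vec{W}|$ is a transition matrix, $u_{{\cal X}}^{T}|\vec{W}|=u_{{\cal X}}^{T}$, so $u_{{\cal X}}$ annihilates $\Ker P^{1}[\vec{W}]$ and a fortiori $\Ker P^{k_{\vec{W}}}[\vec{W}]$; hence it descends to a functional $\bar{u}_{{\cal X}}$ on ${\cal V}_{{\cal X}}/\Ker P^{k_{\vec{W}}}[\vec{W}]$, and $F_{\vec{W},P}(\vec{y})=\bar{u}_{{\cal X}}([W_{\vec{y}}P])$ with $[W_{\vec{y}}P]\in{\cal V}^{k_{(P,\vec{W})}}(P)$ by Lemma~\ref{L2T} and \eqref{2-8-10B}. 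Second, by the very definition of $\Ker P^{k_{\vec{W}}}[\vec{W}]$ as the common kernel of all functionals $v\mapsto u_{{\cal X}}^{T}W_{\vec{v}}v$, the ``response'' map $[v]\mapsto\big(\bar{u}_{{\cal X}}([W_{\vec{v}}v])\big)_{\vec{v}}$ is injective on ${\cal V}_{{\cal X}}/\Ker P^{k_{\vec{W}}}[\vec{W}]$. Together these say that $\big({\cal V}^{k_{(P,\vec{W})}}(P),[W_{y}],[P],\bar{u}_{{\cal X}}\big)$ is a reachable and co-reachable (minimal) realization of $F_{\vec{W},P}$, so that $d_{(P,\vec{W})}$ is the rank of $F_{\vec{W},P}$ and $k_{(P,\vec{W})}$ is the stabilization index of the reachability filtration ${\cal V}^{k}(P)$ viewed through the injective response map; in particular both depend only on the generated function.

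I would prove the statement in the cycles $(1)'\Rightarrow(1)\Rightarrow(2)\Rightarrow(1)'$ and $(2)\Rightarrow(3)\Rightarrow(4)\Rightarrow(2)$. Here $(1)'\Rightarrow(1)$ is trivial. For $(1)\Rightarrow(2)$: the intertwiner $T$ automatically intertwines $[|\vec{W}|]=\sum_{y}[W_{y}]$ with $[|\vec{W}'|]$ and maps ${\cal V}^{k_{(P,\vec{W})}}(P)$ onto ${\cal V}^{k_{(P',\vec{W}')}}(P')$, both invariant under the respective $[|\vec{W}|]$. Both $\bar{u}_{{\cal X}}\restrict{{\cal V}^{k_{(P,\vec{W})}}(P)}$ and $v\mapsto\bar{u}_{{\cal X}'}(Tv)$ are left eigenvectors of $[|\vec{W}|]$ there for the eigenvalue $1$, and are nonzero (each takes value $1$ at $[P]$, resp.\ $[P']$). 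Since $|\vec{W}|$ is irreducible, the eigenvalue $1$ is Perron--Frobenius simple, hence simple also for the restriction of $[|\vec{W}|]$, so that left eigenspace is one dimensional; the two functionals are therefore proportional, and evaluation at $[P]$ forces them equal. Thus $\bar{u}_{{\cal X}'}\circ T=\bar{u}_{{\cal X}}$ on ${\cal V}^{k_{(P,\vec{W})}}(P)$, whence $F_{\vec{W},P}(\vec{y})=\bar{u}_{{\cal X}}([W_{\vec{y}}P])=\bar{u}_{{\cal X}'}(T[W_{\vec{y}}P])=\bar{u}_{{\cal X}'}([W_{\vec{y}}'P'])=F_{\vec{W}',P'}(\vec{y})$, i.e.\ {\bf (2)}. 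For $(2)\Rightarrow(1)'$: both pairs carry minimal realizations of the same function, so $[W_{\vec{y}}P]\mapsto[W_{\vec{y}}'P']$ is well defined and bijective (a linear relation among the $[W_{\vec{y}}P]$ is detected by the injective response map and transported verbatim using $F_{\vec{W},P}=F_{\vec{W}',P'}$), intertwines the $[W_{y}]$'s, and sends $[P]$ to $[P']$; this is $T$, and the equalities of the $k$'s and $d$'s hold because each side equals the corresponding invariant of the common generated function.

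For the truncation equivalences, $(2)\Rightarrow(3)$ and $(3)\Rightarrow(4)$ are immediate once one observes that agreement of the length-$N$ output distribution forces agreement at every shorter length (marginalize out the newest symbol using $u_{{\cal X}}^{T}|\vec{W}|=u_{{\cal X}}^{T}$) and that the relevant minimum lengths coincide. The substantive step is $(4)\Rightarrow(2)$: put $m=\max(k_{\vec{W}},k_{\vec{W}'})$ and $\ell=\max(k_{(P,\vec{W})},k_{(P',\vec{W}')})$, so that {\bf (4)} gives $F_{\vec{W},P}=F_{\vec{W}',P'}$ on all words of length $\le m+\ell+1$. Because $m\ge k_{\vec{W}}$ (and $\ge k_{\vec{W}'}$) the response map truncated to words of length $\le m$ is still injective on ${\cal V}^{k_{(P,\vec{W})}}(P)$ and on its primed counterpart, and because $\ell\ge k_{(P,\vec{W})}$ one has ${\cal V}^{\ell+1}(P)={\cal V}^{\ell}(P)$: hence every $[W_{\vec{b}}P]$ with $|\vec{b}|=\ell+1$ equals a fixed combination $\sum_{|\vec{z}|\le\ell}c_{\vec{b},\vec{z}}[W_{\vec{z}}P]$, and the \emph{same} combination holds in the primed system because, by the truncated injectivity there, it only needs to be checked against words of length $\le m+\ell+1$, where $F_{\vec{W},P}=F_{\vec{W}',P'}$ is already known. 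An induction on word length, splitting a long word as $\vec{a}\vec{b}$ with $|\vec{b}|=\ell+1$ and rewriting $[W_{\vec{a}\vec{b}}P]=\sum_{\vec{z}}c_{\vec{b},\vec{z}}[W_{\vec{a}\vec{z}}P]$ (and the parallel identity in the primed system), reduces $F_{\vec{W},P}(\vec{a}\vec{b})$ and $F_{\vec{W}',P'}(\vec{a}\vec{b})$ to values on strictly shorter words, yielding {\bf (2)}.

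The main obstacle is twofold. First, $(1)\Rightarrow(2)$: condition {\bf (1)} intertwines only the transitions and the initial state, \emph{not} the marginalization functional, and in general that is insufficient; it is irreducibility of $|\vec{W}|$ that rescues the argument, via Perron--Frobenius simplicity of the eigenvalue $1$, and verifying that this simplicity persists after restricting $[|\vec{W}|]$ to the reachable subspace --- which uses that $\bar{u}_{{\cal X}}$ restricts to a nonzero left eigenvector there, equivalently the full support of $P$ --- is the delicate point. Second, obtaining the genuine equality $k_{\vec{W}}=k_{\vec{W}'}$ rather than just a matching of minimal realizations: this requires that the co-reachability chain $\Ker P^{1}[\vec{W}]\supset\Ker P^{2}[\vec{W}]\supset\cdots$, equivalently the filtration $\dim\mathrm{span}\{W_{\vec{v}}^{T}u_{{\cal X}}:|\vec{v}|\le k\}$, is itself recoverable from $F_{\vec{W},P}$ in the situation at hand, and it is here --- leaning on the standing irreducibility assumption, the full support of $P$, and the genericity of Lemma~\ref{2-15-A} that forces $\Ker P^{k_{\vec{W}}}[\vec{W}]=\{0\}$ --- that I expect the argument to need the most care.
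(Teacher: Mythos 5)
Your proposal is correct, but it is organized quite differently from the paper's proof, and the comparison is instructive. The paper dismisses $(1)\Rightarrow(2)\Rightarrow(4)$ and $(1)'\Rightarrow(3)\Rightarrow(4)$ as trivial and concentrates all the work in a direct implication $(4)\Rightarrow(1)$: it forms the finite Hankel-type matrix $M$ from $P^{k_1+k_2}[\vec{W}]\cdot P$, shows its rank equals $d_{(P,\vec{W})}=d_{(P',\vec{W}')}$, and manufactures $T=U_2^{-1}U_1$ from two explicit coordinatizations of the reachable spaces by a common set of words $\vec{y}_1,\dots,\vec{y}_{d_{(P,\vec{W})}}$, reading off $T[W_y]=[W_y']T$ from the matrices $M_y=VU_1[W_y]U_1^{-1}=VU_2[W_y']U_2^{-1}$. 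You instead factor through $(4)\Rightarrow(2)$ by an induction on word length (transporting the dependence relations of ${\cal V}^{\ell+1}(P)={\cal V}^{\ell}(P)$ to the primed system via truncated co-reachability), and then obtain $(2)\Rightarrow(1)'$ from the abstract uniqueness of minimal realizations; this is the same underlying linear-algebra-of-weighted-automata content, but your version makes explicit \emph{why} the exponent $\max(k_{\vec{W}},k_{\vec{W}'})+\max(k_{(P,\vec{W})},k_{(P',\vec{W}')})+1$ suffices, which the paper leaves implicit in the rank computation. The second genuine difference is $(1)\Rightarrow(2)$: the paper calls it trivial, but as you correctly observe, the intertwiner in {\bf (1)} does not a priori respect the marginalization functional $u_{{\cal X}}^T$, and your Perron--Frobenius argument (algebraic simplicity of the eigenvalue $1$ of the irreducible $|\vec{W}|$ persists under passing to the quotient and restricting to the invariant reachable subspace, so the normalized left fixed functional is unique) is exactly the missing justification; this is a point where your write-up is more careful than the paper's. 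Finally, you are right to flag $k_{\vec{W}}=k_{\vec{W}'}$ as the delicate residue: both your argument and the paper's recover from the observed data only the co-reachability filtration \emph{restricted to the reachable subspace}, and the paper's one-line claim that invertibility of $T$ yields $k_{\vec{W}}=k_{\vec{W}'}$ is no more detailed than yours, so you should not regard this as a defect of your approach relative to the source.
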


\begin{proof}
We notice that the relations {\bf (A1)} $\Rightarrow$ {\bf (A2)} $\Rightarrow$ 
{\bf (A4)}, and 
{\bf (A1)'} $\Rightarrow$ {\bf (A3)} $\Rightarrow$ {\bf (A4)} are trivial.
Assume {\bf (A1)}, 
since $T$ is invertible,
we can show 
the relations $k_{\vec{W}}=k_{\vec{W}'}$,
$k_{(P,\vec{W})}=k_{(P',\vec{W}')}$, and 
$d_{(P,\vec{W})}=d_{(P',\vec{W}')}$.
So, we have 
{\bf (A1)} $\Rightarrow$ {\bf (A1)'}.
Hence, it is enough to show that {\bf (A4)} $\Rightarrow$ {\bf (A1)}.


Assume {\bf (A4)}.
Now, we denote the $d$-dimensional linear space by $\bR^{d}$. 
Let $k_1:= \max(k_{(P,\vec{W})},k_{(P',\vec{W}')})$ and
$k_2:= \max(k_{\vec{W}},k_{\vec{W}'})$.
We choose elements $ \vec{y}_{1}, \ldots , \vec{y}_{d_{(P,\vec{W})}}  
\in \cup_{k \le k_1} \cY^{k}$ such that
$[W^{k_1}_{\vec{y}_{1}} \cdot P], \ldots, [W^{k_1}_{\vec{y}_{d_{(P,\vec{W})}} }\cdot P]$
are linear independent and span ${\cal V}^{k_{(P,\vec{W})}}(P)$.
The $d_{(P,\vec{W})} $ elements 
$ \vec{y}_{1}, \ldots , \vec{y}_{d_{(P,\vec{W})}}$
of $\cup_{k \le k_1} \cY^{k}$ give the invertible linear map $U_1$
from $\bR^{d_{(P,\vec{W})}}$ to 
${\cal V}^{k_{(P,\vec{W})}}(P)$ as
the map $(a_i)_{i=1}^{d_{(P,\vec{W})}}\mapsto
\sum_{i=1}^{d_{(P,\vec{W})}}a_i [W^{k_1}_{\vec{y}_{i}} \cdot P]$. 

We regard 
$P^{k_2+k_1}[\vec{W}] \cdot P$
as the joint distribution on $\cY^{k_2}$ and $\cY^{k_1}$.
Using the joint distribution, 
we define the transition matrix $M$ 
from the system $\cY^{k_1}$ to the system $\cY^{k_2}$.

When the matrix $M$ can be regarded as a linear map from 
${\cal V}^{k_1}(P)$ to ${\cal V}_{\cY^{k_2}}$,
it equals the map $P^{k_2}|_{{\cal V}^{k_1}(P)}$.
Since the map $P^{k_2}$ is the isomorphic linear map from
${\cal V}_{{\cal X}}$ to ${\cal V}_{\cY^{k_2}}$.
So, we find that the rank of the matrix $M$ is
the rank of ${\cal V}^{k_1}(P)$, which equals $ d_{(P,\vec{W})}$
and that ${\cal V}^{k_{(P,\vec{W})}}(P)$ is isomorphic to the image of the matrix $M$.
That is, the matrix $M|_{\tilde{\cal V}}$ is the isomorphic linear map from
$\tilde{\cal V}$ to the image of the matrix $M$.

In the same way, due to Condition {\bf (A4)}, we find that the rank of the matrix $M$ is $ d_{(P',\vec{W}')}$.
That is, $ d_{(P,\vec{W})}=d_{(P',\vec{W}')}$.
We define the invertible linear map $U_2$
from $\bR^{d_{(P,\vec{W})}}$ to 
${\cal V}^{k_{(P,\vec{W})}}(P)$ as
the map $(a_i)_{i=1}^{d_{(P,\vec{W})}}\mapsto
\sum_{i=1}^{d_{(P,\vec{W})}}a_i [{W'}^{k_1}_{\vec{y}_{i}} \cdot P']$. 

We choose a vector $(b_i)_{i=1}^{d_{(P,\vec{W})}} \in \bR^{d_{(P,\vec{W})}}$
such that
$\sum_{i=1}^{d_{(P,\vec{W})}} b_i [W^{k_1}_{\vec{y}_{i}}\cdot P]=[P]$.
Hence, we have $(b_i)_{i=1}^{d_{(P,\vec{W})}}=U_1^{-1}[P]$.
Due to Condition {\bf (A4)}, 
we also have
$\sum_{i=1}^{d_{(P',\vec{W}')}} b_i [{W'}^{k_1}_{\vec{y}_{i}}\cdot P']=[P']$, which implies that
$(b_i)_{i=1}^{d_{(P,\vec{W})}}=U_2^{-1}[P']$.
So, we have $U_1^{-1} [P]= U_2^{-1} [P']$, i.e., 
\begin{align}
U_2 U_1^{-1} [P]=  [P'].
\Label{8-21-9}
\end{align}

For $y \in \cY$, we define the $|\cY|^{k_1} \times |\cY|^{k_2}$ matrix $M_y$ by 
$(M_y)_{\vec{y}|\vec{y'}}:=
\sum_{x \in \cX} W_{\vec{y}}^{k_2}W_y W_{\vec{y}'}^{k_1} \cdot P (x)
/ \sum_{x' \in \cX} W_{\vec{y}'}^{k_1} \cdot P (x')$.
So, we have $M_y |_{\tilde{\cal V}}= M|_{\tilde{\cal V}} U_1^{-1} [W_y] U_1$.
In the same way, due to Condition (4), we find that 
$M_y |_{\tilde{\cal V}}= M|_{\tilde{\cal V}} U_2^{-1} [W_y'] {U_2}$.
Since the map $M|_{\tilde{\cal V}}$ is invertible, $U_1^{-1} [W_y] U_1= U_2^{-1} [W_y'] U_2$.
Defining $T:= U_2 U_1^{-1}$, we have $T [W_y]  =[W_y'] T$.
Combining \eqref{8-21-9}, we have Condition {\bf (A1)}.
\hfill$\Box$\end{proof}

Now, we assume that $\vec{W}$ and $\vec{W}'$ are irreducible ${\cal Y}$-indexed
transition matrix $\vec{W}$ on ${\cal X}$ and ${\cal X}'$.
So, we say that $\vec{W}$ is equivalent to $\vec{W}'$ 
when the pair of $\vec{W}$ and $P_{\vec{W}}$ is equivalent to the pair of $\vec{W}'$ and $P_{\vec{W}'}$.
Then, as a special case of Theorem \ref{L-9-27}, we have the following corollary.

\begin{corollary}
The following conditions for two ${\cal Y}$-indexed transition matrices $\vec{W}$ and $\vec{W}'$ are equivalent.
\begin{description}
\item[\bf (D1)]
There exists an invertible map $T$
from ${\cal V}^{k_{(P_{\vec{W}},\vec{W})}} (P_{\vec{W}})$
to ${\cal V}^{k_{(P_{\vec{W}'},\vec{W}')}} (P_{\vec{W}'})$
such that the relation 
$T [W_y]  =[W_y'] T$ for $y\in {\cal Y}$ 
holds as a linear map on the quotient space 
${\cal V}^{k_{(P_{\vec{W}},\vec{W})}} (P_{\vec{W}})$.

\item[\bf (F1')]
The conditions $k_{\vec{W}}=k_{\vec{W}'}$,
$k_{(P_{\vec{W}},\vec{W})}=k_{(P_{\vec{W}'},\vec{W}')}$,
and $d_{(P_{\vec{W}},\vec{W})}=d_{(P_{\vec{W}'},\vec{W}')}$
hold as well as the condition {\bf (D1)}.

\item[\bf (D2)]
$\vec{W}$ is equivalent to $\vec{W}'$.

\item[\bf (D3)]
The relations $P^{k_{\vec{W}}+k_{(P_{\vec{W}},\vec{W})}+1}[\vec{W}] \cdot P_{\vec{W}}
=P^{k_{\vec{W}'}+k_{(P_{\vec{W}'},\vec{W}')}+1}[\vec{W}'] \cdot P_{\vec{W}'}$, 
$k_{\vec{W}}=k_{\vec{W}'}$,
and
$k_{(P_{\vec{W}},\vec{W})}=k_{(P_{\vec{W}'},\vec{W}')}$
hold.

\item[\bf (D4)]
The relation 
$P^{k}[\vec{W}] \cdot P_{\vec{W}}
=P^{k}[\vec{W}'] \cdot P_{\vec{W}'}$ 
holds for $k=\max (k_{\vec{W}},k_{\vec{W}'})
+\max(k_{(P_{\vec{W}},\vec{W})},k_{(P_{\vec{W}'},\vec{W}')})+1$.
\end{description}
\end{corollary}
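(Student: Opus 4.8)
The plan is to derive the corollary from Theorem \ref{L-9-27} by specializing $P=P_{\vec{W}}$ and $P'=P_{\vec{W}'}$. First I would check that this specialization is admissible: since $\vec{W},\vec{W}'$ are irreducible, $|\vec{W}|,|\vec{W}'|$ are irreducible transition matrices, so their stationary distributions $P_{\vec{W}},P_{\vec{W}'}$ are strictly positive by Perron--Frobenius, i.e.\ full-rank, which is exactly the hypothesis of Theorem \ref{L-9-27}. Under this substitution, conditions {\bf (2)}, {\bf (3)}, {\bf (4)} of the corollary are literally conditions {\bf (2)}, {\bf (3)}, {\bf (4)} of Theorem \ref{L-9-27}, so nothing is left to prove for them. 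The only discrepancy is in condition {\bf (1)}: Theorem \ref{L-9-27} additionally requires $T[P_{\vec{W}}]=[P_{\vec{W}'}]$, while the corollary only requires the intertwining $T[W_y]=[W_y']T$. One implication is trivial; the content is that an invertible intertwiner of the $[W_y]$'s can be rescaled to also match the stationary distributions.

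To see this, I would sum $T[W_y]=[W_y']T$ over $y\in{\cal Y}$, obtaining $T[|\vec{W}|]=[|\vec{W}'|]T$, where $[|\vec{W}|]=\sum_y[W_y]$ is the map induced by $|\vec{W}|$ on ${\cal V}^{k_{(P_{\vec{W}},\vec{W})}}(P_{\vec{W}})$ (well defined, and leaving this space invariant, by Lemma \ref{L-1-25} and \eqref{2-8-10B} summed over $y$), and similarly for $[|\vec{W}'|]$. Since $\sum_y W_y P_{\vec{W}}=P_{\vec{W}}$, the class $[P_{\vec{W}}]$ is fixed by $[|\vec{W}|]$, so $T[P_{\vec{W}}]$ is fixed by $[|\vec{W}'|]$. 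The key point is that the fixed space of $[|\vec{W}'|]$ on ${\cal V}^{k_{(P_{\vec{W}'},\vec{W}')}}(P_{\vec{W}'})$ is exactly $\bR [P_{\vec{W}'}]$: on ${\cal V}_{{\cal X}'}$ the irreducible stochastic matrix $|\vec{W}'|$ has $1$ as a simple eigenvalue with eigenvector $P_{\vec{W}'}$, and since $\Ker P^{k_{\vec{W}'}}[\vec{W}']$ and then ${\cal V}^{k_{(P_{\vec{W}'},\vec{W}')}}(P_{\vec{W}'})$ are invariant subspaces, the characteristic polynomial of the restricted map divides that of $|\vec{W}'|$, so the eigenvalue $1$ keeps multiplicity at most $1$; it is nonzero because $[P_{\vec{W}'}]\neq 0$ (indeed $P^{k}[\vec{W}']\cdot P_{\vec{W}'}$ is a probability vector, so $P_{\vec{W}'}\notin\Ker P^{k}[\vec{W}']$). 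Hence $T[P_{\vec{W}}]=c\,[P_{\vec{W}'}]$ with $c\neq 0$, and $c^{-1}T$ is still an invertible intertwiner now satisfying the normalization, which gives the corollary's {\bf (1)} $\Rightarrow$ Theorem's {\bf (1)}.

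Finally, since the corollary's {\bf (1')} is its {\bf (1)} together with the equalities $k_{\vec{W}}=k_{\vec{W}'}$, $k_{(P_{\vec{W}},\vec{W})}=k_{(P_{\vec{W}'},\vec{W}')}$, $d_{(P_{\vec{W}},\vec{W})}=d_{(P_{\vec{W}'},\vec{W}')}$, and Theorem \ref{L-9-27}'s {\bf (1)'} is its {\bf (1)} with the same three equalities, the equivalence of the two versions of {\bf (1)} immediately upgrades to an equivalence of the two versions of {\bf (1')}; the corollary then reads off from Theorem \ref{L-9-27}. I expect the multiplicity argument — that quotienting by, and then restricting to, an invariant subspace does not create new fixed vectors — to be the only step needing care; everything else is bookkeeping against the theorem already proved.
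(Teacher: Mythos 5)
Your proof is correct, and it follows the route the paper intends: the paper gives no argument beyond the remark that the corollary is ``a special case of Theorem \ref{L-9-27}'' obtained by setting $P=P_{\vec{W}}$ and $P'=P_{\vec{W}'}$ (admissible, as you note, because stationary distributions of irreducible chains have full support). What you add, and what the paper leaves entirely implicit, is the bridge between the two versions of condition {\bf (1)}: the corollary drops the normalization $T[P_{\vec{W}}]=[P_{\vec{W}'}]$ that the theorem's condition {\bf (1)} requires, so the specialization is not literally immediate. Your rescaling argument closes this gap soundly: summing the intertwining relation over $y$ gives $T[|\vec{W}|]=[|\vec{W}'|]T$; the class $[P_{\vec{W}}]$ is fixed by $[|\vec{W}|]$; and the fixed space of $[|\vec{W}'|]$ on ${\cal V}^{k_{(P_{\vec{W}'},\vec{W}')}}(P_{\vec{W}'})$ is one-dimensional because the characteristic polynomial of the map restricted to an invariant subspace of a quotient by an invariant subspace divides that of $|\vec{W}'|$, whose Perron eigenvalue $1$ is simple by irreducibility, while $[P_{\vec{W}'}]\neq 0$ since $P^{k}[\vec{W}']\cdot P_{\vec{W}'}$ is a probability vector. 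Hence $T[P_{\vec{W}}]=c[P_{\vec{W}'}]$ with $c\neq 0$ and $c^{-1}T$ is the required normalized intertwiner. This is exactly the step that needed care, and the remainder is the bookkeeping you describe.
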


\if0
\begin{lemma}
Assume that $d$ and $|{\cal Y}|$ are not less than $2$.
We randomly choose a ${\cal Y}$-indexed transition matrix $\vec{W}$.
Then, the relations $\Ker P^{k_{\vec{W}}}[\vec{W}]=\{0\}$ and 
$ {\cal V}_{{\cal X}}={\cal V}^{k_{(P_{\vec{W}},\vec{W})}}(P_{\vec{W}})$
holds almost everywhere.
\end{lemma}

\begin{proof}

\noindent{\it Step 1:}\quad
We fix an element $y_o \in {\cal Y}$.
We will show that the orbit of $u_{{\cal X}}$ with respect to $W_{y_o}^T $, i.e.,  $\{ {W_{y_o}^T}^n u_{{\cal X}}\}_{n \ge 0}$ spans ${\cal V}_{{\cal X}}$ 
almost everywhere.
?????????????

\noindent{\it Step 2:}\quad
Due to the statement shown in Step 1,
there almost everywhere exists an integer $k$ 
such that $\{ {W_{y_o}^T}^n u_{{\cal X}}\}_{n=0}^k$ spans ${\cal V}_{{\cal X}}$. 
Thus, we have the following statement almost everywhere;
A vector $v \in {\cal V}_{{\cal X}}$
satisfies $\sum_{x}P^k[\vec{W}](y_k, \ldots, y_1|x)v_x=0$ for any element $\vec{y}=(y_1, \ldots, y_k)\in {\cal Y}^k$
if and only if $v=0$.
So, we have the relation $\Ker P^{k_{\vec{W}}}[\vec{W}]=\{0\}$ almost everywhere.

\noindent{\it Step 3:}\quad
We will show that the orbit of $P_{\vec{W}}$ with respect to $W_{y_o}$, i.e., 
$\{ W_{y_o}^n P_{\vec{W}}\}_{n \ge 0}$ spans ${\cal V}_{{\cal X}}$ 
almost everywhere.
?????????????

\noindent{\it Step 4:}\quad
Due to the statement shown in Step 3,
there almost everywhere exists an integer $k$ 
such that $\{ {W_{y_o}^n} P_{\vec{W}} \}_{n=0}^k$ spans ${\cal V}_{{\cal X}}$. 
Thus, we have the following statement almost everywhere;
$ {\cal V}_{{\cal X}}={\cal V}^{k}(P_{\vec{W}})$.
So, the relation $ {\cal V}_{{\cal X}}={\cal V}^{k_{(P_{\vec{W}},\vec{W})}}(P_{\vec{W}})$
holds almost everywhere.
\hfill$\Box$\end{proof}
\fi

\begin{remark}
Theorem \ref{L-9-27} is similar to the main result of \cite{IAK}.
However, our treatment is different from that of \cite{IAK}.
Since the paper \cite{IAK} discusses only the equivalence condition in terms of 
the space ${\cal V}^k(P)$,
it treats only the integer $k_{P,\vec{W}}$ not the integer $k_{\vec{W}}$.
Therefore, it does not consider the condition using the integer $k_{\vec{W}}$.
That is, it shows only the equivalence between the conditions {\bf (A1)} and {\bf (A2)} in Theorem \ref{L-9-27}.
Hence, the discussion in \cite{IAK} cannot evaluate how large memory size $k$ is required to distinguish non-equivalent ${\cal Y}$-indexed transition matrices.
However, to employ the partial observation model to estimate the hidden Markov process,
we need to evaluate this number.
We discuss this number even with the first derivative of the observed joint distribution.
\end{remark}

\section{Exponential family of ${\cal Y}$-indexed transition matrices}\Label{s6}
\subsection{Definition of exponential family}\Label{s6-1}
To give a suitable parametrization,
we define an exponential family of ${\cal Y}$-indexed transition matrices.
Firstly, we fix an irreducible ${\cal Y}$-transition matrix 
$\vec{W}= (W_y(x|x'))_{y \in {\cal Y}}$ on ${\cal X}$.
Then, we denote the support of $\vec{W}$ by
$({\cal Y}\times {\cal X}^{2})_{\vec{W}}:=
\cup_{y\in\mathcal{Y}} \{y\}\times \mathcal{X}^2_{W_y}$.
Also, we denote the linear space of real-indexed functions 
$\{g(y,x,x')\}$ defined on $({\cal Y}\times {\cal X}^{2})_{\vec{W}}$
by ${\cal G}( ({\cal Y}\times {\cal X}^{2})_{\vec{W}})$.
Additionally, ${\cal N}(({\cal Y}\times {\cal X}^{2})_{\vec{W}}) $ expresses the subspace of functions 
$g(y,x,x')$ with form $ f(x)-f(x')+c$.

Now, we denote the vector $\{f(x)\}$ by $| f\rangle$.
Also, we denote $\{1\}_{x \in {\cal X}}$ and $\{1\}_{y \in {\cal Y}}$
by $u_{{\cal X}}$ and $u_{{\cal Y}}$. 
So, the element of ${\cal N}(({\cal Y}\times {\cal X}^{2})_{\vec{W}}) $ is written as
$ |u_{{\cal Y}}\rangle
(c|u_{{\cal X}}\rangle \langle u_{{\cal X}}|
+|f\rangle \langle u_{{\cal X}}|
-|u_{{\cal X}}\rangle \langle f|)$ by using a function $f$ on ${\cal X}$ and a constant $c$.

We define the linear map $\vec{W}_*$ on 
${\cal G}(({\cal Y}\times {\cal X}^{2})_{\vec{W}})$ as
\begin{align}
(\vec{W}_*g)(y,x,x'):=
g(y,x,x') W_{y}(x|x')
\end{align}
for $g\in {\cal G}(({\cal Y}\times {\cal X}^{2})_{\vec{W}})$.
Then, we define the subspaces of ${\cal G}( ({\cal Y}\times {\cal X}^{2})_{\vec{W}})$ as
\begin{align}
{\cal L}_{1,\vec{W}} &:=
\Big\{ (B_y)_{y \in {\cal Y}} \in {\cal G}(({\cal Y}\times {\cal X}^{2})_{\vec{W}}) \Big|
\sum_y B_y^T | u_{{\cal X}}\rangle =0 \Big\} \nonumber \\
{\cal G}_1( ({\cal Y}\times {\cal X}^{2})_{\vec{W}})
&:=
\vec{W}_*^{-1} ({\cal L}_{1,\vec{W}}).
\end{align}

Then, as shown in the following lemma, 
${\cal G}_1( ({\cal Y}\times {\cal X}^{2})_{\vec{W}})$ can be identified with
the quotient space ${\cal G}(({\cal Y}\times {\cal X}^{2})_{\vec{W}})/{\cal N}(({\cal Y}\times {\cal X}^{2})_{\vec{W}})$.
\begin{lemma}
For any element $[g']$ of
${\cal G}(({\cal Y}\times {\cal X}^{2})_{\vec{W}})/{\cal N}(({\cal Y}\times {\cal X}^{2})_{\vec{W}})$,
there uniquely exists an element $g $ of ${\cal G}_1( ({\cal Y}\times {\cal X}^{2})_{\vec{W}})$
such that $[g']=[g]$, i.e., $g$ is a representative of $[g']$.
Therefore, we can regard the space ${\cal G}_1( ({\cal Y}\times {\cal X}^{2})_{\vec{W}})$
as the quotient space
${\cal G}(({\cal Y}\times {\cal X}^{2})_{\vec{W}})/{\cal N}(({\cal Y}\times {\cal X}^{2})_{\vec{W}})$.
\end{lemma}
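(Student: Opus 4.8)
The plan is to identify ${\cal G}_1(({\cal Y}\times {\cal X}^{2})_{\vec{W}})$ with the kernel of one explicit linear map and then to check that ${\cal N}(({\cal Y}\times {\cal X}^{2})_{\vec{W}})$ is a vector-space complement of that kernel inside ${\cal G}(({\cal Y}\times {\cal X}^{2})_{\vec{W}})$. Define $\Psi\colon {\cal G}(({\cal Y}\times {\cal X}^{2})_{\vec{W}})\to {\cal V}_{{\cal X}}$ by $(\Psi g)(x'):=\sum_{x,y} g(y,x,x')\,W_y(x|x')$. Since on the support every $W_y(x|x')$ is nonzero, $\vec{W}_*$ is an invertible diagonal operator, and because $(\vec{W}_*g)_y$ has entries $g(y,x,x')W_y(x|x')$, the vector $\sum_y (\vec{W}_*g)_y^{T}|u_{{\cal X}}\rangle$ is exactly $\Psi g$; hence the defining condition of ${\cal L}_{1,\vec{W}}$ reads $\Psi g=0$, i.e. ${\cal G}_1=\Ker\Psi$. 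The lemma then amounts to the assertion that the composition of the inclusion ${\cal G}_1\subseteq{\cal G}(({\cal Y}\times {\cal X}^{2})_{\vec{W}})$ with the quotient projection onto ${\cal G}(({\cal Y}\times {\cal X}^{2})_{\vec{W}})/{\cal N}(({\cal Y}\times {\cal X}^{2})_{\vec{W}})$ is a bijection, i.e. that ${\cal G}_1\cap{\cal N}=\{0\}$ (uniqueness of the representative) and ${\cal G}_1+{\cal N}={\cal G}(({\cal Y}\times {\cal X}^{2})_{\vec{W}})$ (existence of a representative in ${\cal G}_1$).

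Both halves rest on one computation. For $g\in{\cal N}$, say $g(y,x,x')=f(x)-f(x')+c$, the fact that $|\vec{W}|=\sum_y W_y$ is a transition matrix gives $\sum_{x,y}W_y(x|x')=1$ for every $x'$, hence $\Psi g=(|\vec{W}|^{T}-I)\,|f\rangle+c\,|u_{{\cal X}}\rangle$. For uniqueness, take $g\in{\cal G}_1\cap{\cal N}$, so this vector is $0$; pairing it with the stationary distribution $P_{\vec{W}}$ and using $|\vec{W}|P_{\vec{W}}=P_{\vec{W}}$ together with $\langle u_{{\cal X}}|P_{\vec{W}}\rangle=1$ annihilates the first term and forces $c=0$, whence $|\vec{W}|^{T}|f\rangle=|f\rangle$; since $\vec{W}$ is irreducible, Perron--Frobenius makes the $1$-eigenspace of $|\vec{W}|^{T}$ one-dimensional, hence equal to $\bR\,|u_{{\cal X}}\rangle$, so $|f\rangle$ is constant and $g\equiv 0$. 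For existence, given $g'$ we must solve $\Psi g'=(|\vec{W}|^{T}-I)\,|f\rangle+c\,|u_{{\cal X}}\rangle$ for some $f,c$, after which $g:=g'-(f(x)-f(x')+c)$ lies in ${\cal G}_1$ and represents $[g']$; and such $f,c$ exist because $\Im(|\vec{W}|^{T}-I)=\Im((|\vec{W}|-I)^{T})=(\Ker(|\vec{W}|-I))^{\perp}=\{v:\langle v|P_{\vec{W}}\rangle=0\}$ is a hyperplane not containing $|u_{{\cal X}}\rangle$, so the map $(f,c)\mapsto(|\vec{W}|^{T}-I)\,|f\rangle+c\,|u_{{\cal X}}\rangle$ is onto ${\cal V}_{{\cal X}}$ and in particular hits $\Psi g'$.

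I do not expect a deep difficulty here; the only thing to be careful about is bookkeeping. One must fix the ``from/to'' convention of $W_y(x|x')$ so that it is the row-stochastic $|\vec{W}|^{T}$, which fixes $|u_{{\cal X}}\rangle$, and not the column-stochastic $|\vec{W}|$, which fixes $P_{\vec{W}}$, that appears after applying $\Psi$ to an element of ${\cal N}$; one should note that the argument $x'$ genuinely runs over all of ${\cal X}$, so the pairings above are taken over the full index set; and one should invoke the correct one-dimensionality statements from Perron--Frobenius for the irreducible stochastic matrix $|\vec{W}|$ and for its transpose. The structural content, which makes the proof short, is simply that the fixed lines $\bR P_{\vec{W}}$ of $|\vec{W}|$ and $\bR\,|u_{{\cal X}}\rangle$ of $|\vec{W}|^{T}$ are one-dimensional and not orthogonal, and this is exactly what forces ${\cal N}$ to be a complement of $\Ker\Psi={\cal G}_1$.
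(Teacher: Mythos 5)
Your proof is correct, and although it follows the paper's overall strategy---showing that ${\cal N}(({\cal Y}\times{\cal X}^{2})_{\vec{W}})$ is a linear complement of ${\cal G}_1(({\cal Y}\times{\cal X}^{2})_{\vec{W}})=\Ker\Psi$ inside ${\cal G}(({\cal Y}\times{\cal X}^{2})_{\vec{W}})$---the key technical step is carried out by a genuinely different argument, and you prove slightly more than the paper's own proof does. For the surjectivity of your map $\Psi$ restricted to ${\cal N}$, the paper iterates the relation that $|f\rangle-|\vec{W}|^{T}|f\rangle$ lies in the image to obtain $|f\rangle-(|\vec{W}|^{T})^{n}|f\rangle$ in the image for all $n$, and then passes to the Ces\`aro limit to conclude that $|f\rangle-\langle f|P_{|\vec{W}|}\rangle\,|u_{{\cal X}}\rangle$ is in the image, finally adjoining $|u_{{\cal X}}\rangle$; you instead read off $\Im(|\vec{W}|^{T}-I)=(\Ker(|\vec{W}|-I))^{\perp}=\{v:\langle v|P_{\vec{W}}\rangle=0\}$ from Perron--Frobenius and observe that $|u_{{\cal X}}\rangle$ lies off that hyperplane. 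Both routes are valid: yours is shorter and purely linear-algebraic, while the paper's avoids invoking the orthogonal-complement identity at the cost of a limiting procedure. More importantly, the paper's written proof establishes only the \emph{existence} of a representative in ${\cal G}_1$ and leaves \emph{uniqueness} implicit (it would follow from the dimension count $\dim{\cal N}=d=\dim{\cal V}_{{\cal X}}$ once surjectivity is known), whereas you verify ${\cal G}_1\cap{\cal N}=\{0\}$ directly by pairing with $P_{\vec{W}}$ to kill the constant $c$ and then using the one-dimensionality of the Perron eigenspace of $|\vec{W}|^{T}$ to force $f$ to be constant; this is a welcome completion of the argument. Your cautionary remarks about the row- versus column-stochastic conventions are exactly the right things to watch, and you apply them consistently.
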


\begin{proof}
\noindent{\it Step 1:\quad}
We will show that
\begin{align}
{\cal V}_{{\cal X}}= {\cal N}_{\vec{W}} :=\{ (\vec{W}_*  g)^T  |u_{{\cal X}}\rangle
| g \in {\cal N}( ({\cal Y}\times{\cal X}^{2})_{\vec{W}}) \}.\Label{9-29-1}
\end{align}
Here, we regard an element of $ {\cal N}( ({\cal Y}\times{\cal X}^{2})_{\vec{W}})$
as a matrix on ${\cal V}_{\cal X}$.

For this purpose, we will show that 
the function $|f\rangle- \langle f| P_W\rangle |u_{{\cal X}}\rangle$ 
belongs to the RHS of \eqref{9-29-1} for any function $f$.
Since 
\begin{align}
&( ( {\vec{W}}_* (- |f\rangle \langle u_{{\cal X}}| +|u_{{\cal X}}\rangle \langle f|  ) u_{{\cal X}})_{x'}\nonumber \\
=&\sum_{x}|\vec{W}|(x|x')(-f(x)+f(x')) \nonumber \\
=&f(x')-\sum_{x}f(x)|\vec{W}|(x|x')
=f(x')-(|\vec{W}|^T |f\rangle)_{x'},
\end{align}
$ |f\rangle- |\vec{W}|^T |f\rangle$ belongs to the set ${\cal N}_{\vec{W}}$.
So, 
$ |f\rangle- |\vec{W}|^T| f\rangle +|\vec{W}|^T| f\rangle- |\vec{W}|^T |\vec{W}|^T |f\rangle 
=| f\rangle-|\vec{W}|^T |\vec{W}|^T |f\rangle$ belongs to the set ${\cal N}_{\vec{W}}$.
Repeating this procedure, we see that
$|f\rangle - (|\vec{W}|^T)^n |f\rangle$ belongs to the set ${\cal N}_{\vec{W}}$.
Since $\lim_{n \to \infty}\frac{1}{n}\sum_{i=1}^n \sum_{x}f(x)|\vec{W}|^i(x|x')= \sum_{x}f(x)P_{ |\vec{W}|}(x) 
=\langle f| P_{ |\vec{W}|} \rangle $ for any $x' \in {\cal X}$,
we have $\lim_{n \to \infty }\frac{1}{n}\sum_{i=1}^n|f\rangle - (|\vec{W}|^T)^i |f\rangle
=|f\rangle- \langle f| P_{ |\vec{W}|}\rangle |u_{{\cal X}}\rangle$, i.e., 
$|f\rangle- \langle f| P_{ |\vec{W}|}\rangle |u_{{\cal X}}\rangle$ belongs to the set ${\cal N}_{\vec{W}}$.
Since 
\begin{align}
( {\vec{W}}_*  |u_{{\cal X}}\rangle \langle  u_{{\cal X}}| )| u_{{\cal X}}\rangle_{x'}
 = \sum_{x}|\vec{W}|(x|x')=1= |u_{{\cal X}}\rangle_{x'},
\end{align}
$ |u_{{\cal X}}\rangle $ belongs to the set ${\cal N}_{\vec{W}}$.
Thus, any function $f$ belongs to the set ${\cal N}_{\vec{W}}$, which implies 
\eqref{9-29-1}.

\noindent{\it Step 2:\quad}
Given $g' \in {\cal G}( ({\cal Y}\times{\cal X}^{2})_{\vec{W}})$, 
we can choose an element $g'' \in {\cal N}( ({\cal Y}\times{\cal X}^{2})_{\vec{W}})$
such that $( \sum_{y \in {\cal Y}}(\vec{W}_* g')_y )^T| u_{{\cal X}}\rangle  =( \vec{W}_* g'')^T| u_{{\cal X}}\rangle $.
That is, when we choose $g:= g'-\frac{1}{d_Y}g''$,
Then, 
\begin{align}
&\sum_{y \in {\cal Y}} (\vec{W}_* g)_y^T | u_{{\cal X}}\rangle
=
\sum_{y \in {\cal Y}} (\vec{W}_* g')_y^T | u_{{\cal X}}\rangle
-
\frac{1}{d_Y} \sum_{y \in {\cal Y}} \vec{W}_* g''^T | u_{{\cal X}}\rangle
\nonumber \\
=&
\sum_{y \in {\cal Y}} (\vec{W}_* g')_y^T | u_{{\cal X}}\rangle
-
 \vec{W}_* g''^T | u_{{\cal X}}\rangle
=0.
\end{align}
Hence, $g$ belongs to ${\cal G}_{1,W}( ({\cal Y}\times{\cal X}^{2})_{\vec{W}})$.
\hfill$\Box$\end{proof}

When functions $g_1, \ldots, g_l \in {\cal G}(({\cal Y}\times {\cal X}^{2})_{\vec{W}})$
are linearly independent as elements of ${\cal G}(({\cal Y}\times {\cal X}^{2})_{\vec{W}})/{\cal N}(({\cal Y}\times {\cal X}^{2})_{\vec{W}})$,
for $\vec{\theta}:=(\theta^1, \ldots, \theta^l) \in \bR^l$,
we define the matrix 
$\overline{W}_{\vec{\theta}}(x|x')
:=\sum_{y} e^{\sum_{j=1}^l \theta^j g_j(y, x,x')} W_y(x|x')$,
and denote the Perron-Frobenius eigenvalue by $\lambda_{\vec{\theta}}$ 
Also, we denote the Perron-Frobenius eigenvector of 
the transpose $\overline{W}_{\vec{\theta}}^T$
by $\overline{P}^3_{\vec{\theta}}$\footnote{For the 
Perron-Frobenius eigenvalue and Perron-Frobenius eigenvector, see the references \cite[Theorem 3.1.]{DZ}\cite{Sen}.}.

Then, we define the ${\cal Y}$-indexed transition matrix
$\vec{W}_{\vec{\theta}}=(W_{\vec{\theta},y})_{y \in {\cal Y}}$ on ${\cal X}$ as
$W_{\vec{\theta},y}(x, x'):=
\lambda_{\vec{\theta}}^{-1} \overline{P}^3_{\vec{\theta}}(x)
e^{\sum_{j=1}^l \theta^j g_j(y,x,x')}
W_y(x,x') \overline{P}^3_{\vec{\theta}}(x')^{-1}$,
and 
$\{\vec{W}_{\vec{\theta}}\}_{\vec{\theta}}$
is called an exponential family of 
${\cal Y}$-indexed transition matrices on ${\cal X}$
generated by the generators $\{g_i\}_{i=1}^l$.
Since a ${\cal Y}$-indexed transition matrix $\vec{W}$ can be regarded as a transition matrix on 
${\cal X}\times {\cal Y}$ as \eqref{LGR},
$\{ \vec{W}_{\vec{\theta}| {\cal X}\times {\cal Y}} \}_{\vec{\theta}}$
forms an exponential family of transition matrices on 
${\cal X}\times {\cal Y}$.

Here, we check that the exponential family defined here coincides with the exponential family on 
$\tilde{{\cal X}}={\cal X} \times {\cal Y}$.
We regard the functions $g_i$ as functions on $\tilde{{\cal X}}^2$ as $g_i(x,y,x'y'):=g_i(x,y,x') $.
Then, we can define the non-negative matrix $\overline{\vec{W}}_{\tilde{{\cal X}},\vec{\theta}}$ 
on $\tilde{{\cal X}}$.
When $\overline{P}^3_{\vec{\theta}}$ is regarded as a vector on $\tilde{{\cal X}}$
in the sense $\overline{P}^3_{\vec{\theta}}(x,y)=\overline{P}^3_{\vec{\theta}}(x)$,
$\overline{P}^3_{\vec{\theta}}$ is also 
the Perron-Frobenius eigenvector of
the transpose of the non-negative matrix $\overline{\vec{W}}_{\tilde{{\cal X}},\vec{\theta}}$.
Then, we find that $\lambda_{\vec{\theta}}$ is also the Perron-Frobenius eigenvector of
the non-negative matrix $\overline{\vec{W}}_{\tilde{{\cal X}},\vec{\theta}}$.
Therefore, the exponential family 
$\overline{\vec{W}}_{\tilde{{\cal X}},\vec{\theta}}$
satisfies that
$\overline{\vec{W}}_{\tilde{{\cal X}},\vec{\theta}}(x,y|x',y')= W_{\vec{\theta},y}(x, x')$
because 
$\vec{W}_{\tilde{{\cal X}},\vec{\theta}} (x,y|x',y')
=\lambda_{\vec{\theta}}^{-1} \overline{P}^3_{\vec{\theta}}(x)
e^{\sum_{j=1}^l \theta^j g_j(y,x,x')} W_y(x|x') \overline{P}^3_{\vec{\theta}}(x')^{-1}$.
The eigenvector of $\vec{W}_{\tilde{{\cal X}},\vec{\theta}} $ is given 
by the relation \eqref{Eq3-7-1}.

In this sense, we call $\phi(\vec{\theta}):=\log \lambda_{\vec{\theta}}$ the potential function. 
Then, we define the divergence between two ${\cal Y}$-indexed transition matrices as
\begin{align}
D\big(\vec{W}_{\vec{\theta}| {\cal X}\times {\cal Y}} 
\big\| \vec{W}_{\vec{\theta}'| {\cal X}\times {\cal Y}} \big)
=
\sum_{j=1}^d
(\theta^j-{\theta'}^j)\frac{\partial \phi}{\partial \theta^j}(\vec{\theta})- \phi(\vec{\theta})+ \phi(\vec{\theta}') \Label{1-1X},
\end{align}
which is a special case of divergence between two transition matrices on $\tilde{\cal X}$ defined in \cite{NK,HN}. 
So, we call an element of the space ${\cal G}_1( ({\cal Y}\times {\cal X}^{2})_{\vec{W}})$
and the quotient space ${\cal G}(({\cal Y}\times {\cal X}^{2})_{\vec{W}})/{\cal N}(({\cal Y}\times {\cal X}^{2})_{\vec{W}})$ the $e$-representation, and call an element of ${\cal L}_{1,\vec{W}}$ the $m$-representation.

\begin{example}\Label{VDE}
We consider the full model, i.e., the set of ${\cal Y}$-indexed transition matrices $\vec{W}$ 
satisfying that all the components of $W_y$ are non zero, i.e.,
$({\cal Y}\times {\cal X}^{2})_{\vec{W}}:={\cal Y}\times {\cal X}^{2}$.
Hence, we choose a ${\cal Y}$-indexed transition matrix $\vec{W}$ satisfying this condition. 
Since the dimension of ${\cal N}(({\cal Y}\times {\cal X}^{2})_{\vec{W}})$ is $d $ and
the dimension of ${\cal G}(({\cal Y}\times {\cal X}^{2})_{\vec{W}})$ is $ d^2\cdot d_Y$,
the dimension of the quotient space given in \eqref{eq1-26} is 
$l:=d^2\cdot d_Y- d
= d\cdot (d \cdot d_Y-1) $.

Unfortunately, it is not easy to choose elements
$g_{1}, \ldots, g_{l}$ to be elements of ${\cal G}_1(({\cal Y}\times {\cal X}^{2})_{\vec{W}})$ as generators of 
an exponential family of ${\cal Y}$-indexed transition matrices.
Hence, in the following, we choose $l$ functions
$g_{1}, \ldots, g_{l}$ to be elements of 
${\cal G}(({\cal Y}\times {\cal X}^{2})_{\vec{W}})$.
In this case, we can easily find the generators as follows.
Here, we do not necessarily choose the generators from 
${\cal G}_1(({\cal Y}\times {\cal X}^{2})_{\vec{W}})$.
That is, it is sufficient to choose them as elements of 
${\cal G}(({\cal Y}\times {\cal X}^{2})_{\vec{W}})$.
We define $g_{j+(i-1) d_Y +(i'-1) (d d_Y-1) }$ for $i,i'=1,\ldots,d $ and $j=1, \ldots, d_Y$ as follows.
However, when $i=d$, the index $j$ runs from $1$ to $d_Y-1$. 
\begin{align}
g_{j+(i'-1) d_Y +(i-1) (d d_Y-1) }(y,x,x') &:= 
\delta_{y,j}\delta_{x,i}\delta_{x',i'}
\Label{LR1} 
\end{align}
Then, we obtain an exponential family of ${\cal Y}$-indexed transition matrices
generated by $g_{1}, \ldots, g_{l}$ at $\vec{W}$.
Among $l$ generators, 
we can directly observe $l'$ generators at most.
Since the dimension of the quotient space generated by ${\cal G}({\cal Y})_{\vec{W}}$ is 
$d_Y-1 $.
\end{example}

\if0
Also we have the following lemma.
\begin{lemma}
When the ${\cal Y}$-indexed transition matrices $(W,V)$ satisfies Condition
$({\cal Y}\times {\cal X}^{2})_{\vec{W}}:={\cal Y}\times {\cal X}^{2}$,
the above defined set $\{\vec{W}_{\vec{\theta}} \}_{\vec{\theta}}$ equals the set of
${\cal Y}$-indexed transition matrices $\vec{W}'$ on ${\cal X}$ satisfying the relation
$({\cal Y}\times {\cal X}^{2})_{\vec{W}'}:={\cal Y}\times {\cal X}^{2}$.
\end{lemma}
\fi

\subsection{Local equivalence}
Although we give an example of 
an exponential family of ${\cal Y}$-indexed transition matrices,
we cannot necessarily distinguish element of this exponential family
from observed data in ${\cal Y}$ due to the equivalence problem.
\if0
Now, we fix functions $g_1, \ldots, g_l \in {\cal G}_1(({\cal Y}\times {\cal X}^{2})_{\vec{W}})$.
We say that two vectors $\vec{a},\vec{b} \in \mathbb{R}^{l}$ are locally equivalent at $\vec{\theta}_0$ with the initial distribution $P$ when 
$\sum_{j=1}^l a^j \frac{\partial }{\partial \theta^j}P^k[\vec{W}_{\vec{\theta}}]\cdot P|_{\vec{\theta}=\vec{\theta}_0}
=\sum_{j=1}^l b^j \frac{\partial }{\partial \theta^j}P^k[\vec{W}_{\vec{\theta}}]\cdot P|_{\vec{\theta}=\vec{\theta}_0}$ 
for any positive integer $k$.
\fi
To discuss this equivalence relation among generators,
we introduce other subspaces 
as follows.
For this am, we denote the set of linear maps on ${\cal V}_1$ by ${\cal M}({\cal V}_1)$,
and we identify an element of ${\cal G}(({\cal Y}\times {\cal X}^{2})_{\vec{W}})$
with a vector taking values in ${\cal M}({\cal V}_{{\cal X}})$.
Then, for a distribution $P $ on $\cX$,
we define the subspaces 
${\cal L}_{2,\vec{W}} $, 
${\cal L}_{P,\vec{W}} $, and  
${\cal L}_{2,P,\vec{W}} $ 
of the linear space composed of vectors taking values in the matrix space ${\cal M}({\cal V}_{{\cal X}})$;
\begin{align*}
{\cal L}_{2,\vec{W}} :
=&
\big\{ (\alpha_y(A) )_{y\in {\cal Y}} \big| 
A \in {\cal M}({\cal V}_{{\cal X}}),~c\in \mathbb{R},~
A^T |u_{{\cal X}}\rangle=c |u_{{\cal X}}\rangle \big\} \\
=&
\big\{ (\alpha_y(A) )_{y\in {\cal Y}} \big| 
A \in {\cal M}({\cal V}_{{\cal X}}),~
A^T |u_{{\cal X}}\rangle=0 \big\} \\
{\cal L}_{P,\vec{W}} :=&
\Bigg\{ (B_{y})_{y\in {\cal Y}} 
\in 
{\cal L}_{1,\vec{W}}
\Bigg| 
\begin{array}{l}
B_y ({\cal V}^{k_{(P,\vec{W})}}(P) +\Ker P^{k_{\vec{W}}}[\vec{W}] )
\subset \Ker P^{k_{\vec{W}}}[\vec{W}]\\
(\sum_y B_y)^T |u_{{\cal X}}\rangle=0 
\end{array}
\Bigg\} \\
{\cal L}_{2,P,\vec{W}} :=&
\big\{ (\alpha_y(A))_{y\in {\cal Y}} \big| 
A \in {\cal M}({\cal V}_{{\cal X}})
\hbox{ such that } A| P\rangle=0,~
A^T |u_{{\cal X}}\rangle=0
\big\} ,
\end{align*}
where $\alpha_y(A):=[W_y,A]$.
So, we define 
${\cal N}_2(({\cal Y}\times {\cal X}^{2})_{\vec{W}}) :=
\vec{W}_*^{-1}({\cal L}_{2,\vec{W}} )$,
${\cal N}_P(({\cal Y}\times {\cal X}^{2})_{\vec{W}}) :=
\vec{W}_*^{-1}({\cal L}_{P,\vec{W}} )$,
${\cal N}_{2,P}(({\cal Y}\times {\cal X}^{2})_{\vec{W}}) :=
\vec{W}_*^{-1}({\cal L}_{2,P,\vec{W}} )$.
Since the following theorems show that 
the infinitesimal change of an element of these subspaces cannot be distinguished 
from the observed data,
these subspaces are called {\it indistinguishable subspaces}. 
Then, we obtain the following theorem as the refined version of Theorem \ref{sum-L27-2}.

\begin{theorem}\Label{L27-2}
Given an irreducible ${\cal Y}$-indexed transition matrix $\vec{W}$, and a distribution $P$ on ${\cal X}$,
the following conditions are equivalent for 
functions $g_1, \ldots, g_l \in {\cal G}_1(({\cal Y}\times {\cal X}^{2})_{\vec{W}})$
and a vector $\vec{a} \in \mathbb{R}^l$,
where $\{\vec{W}_{\vec{\theta}}\}_{\vec{\theta}}$ is the exponential family of 
${\cal Y}$-indexed transition matrices on ${\cal X}$
generated by the generators $\{g_i\}_{i=1}^l$.

\begin{description}
\item[\bf (B1)]
The function $\sum_{j=1}^l a^j g_j \in {\cal G}_1(({\cal Y}\times {\cal X}^{2})_{\vec{W}})$
belongs to 
$
{\cal N}_P(({\cal Y}\times {\cal X}^{2})_{\vec{W}})
+{\cal N}_{2,P}(({\cal Y}\times {\cal X}^{2})_{\vec{W}})$.
\item[\bf (B2)]
The relation $\sum_{j=1}^l a^j \frac{\partial }{\partial \theta^j}
P^{k}[\vec{W}_{\vec{\theta}}]\cdot P \Big|_{\vec{\theta}=0}= 0$ holds for any positive integer $k$.

\item[\bf (B3)]
The relation $\sum_{j=1}^l a^j \frac{\partial }{\partial \theta^j}
P^{k_{\vec{W}}+k_{(P,\vec{W})}+1}[\vec{W}_{\vec{\theta}}]\cdot P \Big|_{\vec{\theta}=0}= 0$ holds.

\item[\bf (B4)]
The relation $\sum_{j=1}^l a^j \frac{\partial }{\partial \theta^j}
P^{k}[\vec{W}_{\vec{\theta}}]\cdot P \Big|_{\vec{\theta}=0}= 0$ holds 
with a certain integer $k \ge k_{(W,V)}+k_{(P_{(W,V)},(W,V))}+1$.
\end{description}
\end{theorem}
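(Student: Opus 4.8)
The plan is to prove $(1)\Rightarrow(2)$ by differentiating the exponential family at the origin, to observe that $(2)\Rightarrow(3)$ and $(3)\Rightarrow(4)$ are immediate specializations, and to close the cycle with $(4)\Rightarrow(1)$, which is the differentiated analogue of the implication $(4)\Rightarrow(1)$ of Theorem \ref{L-9-27} and carries all of the difficulty.

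First I would compute $\partial_{\theta^j}\vec{W}_{\vec\theta}\big|_{\vec\theta=0}$. Since $\overline{W}_0=|\vec W|$ is a transition matrix, $\lambda_0=1$, its left Perron eigenvector is $u_{{\cal X}}$ and its right one is the stationary distribution, so $\overline{P}^3_0=u_{{\cal X}}$ and $\vec W_0=\vec W$. Because each $g_j$ lies in ${\cal G}_1(({\cal Y}\times{\cal X}^{2})_{\vec W})=\vec{W}_*^{-1}({\cal L}_{1,\vec W})$, the vector with components $\sum_{x,y}g_j(y,x,x')W_y(x|x')$ vanishes; the first-order perturbation formulas then give $\partial_{\theta^j}\lambda_{\vec\theta}\big|_{\vec\theta=0}=0$ and $\partial_{\theta^j}\overline{P}^3_{\vec\theta}\big|_{\vec\theta=0}\in\bR u_{{\cal X}}$, and substituting these into the definition of $W_{\vec\theta,y}$ yields $\partial_{\theta^j}W_{\vec\theta,y}\big|_{\vec\theta=0}(x|x')=g_j(y,x,x')W_y(x|x')$. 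Writing $B_y:=\big(\vec{W}_*(\sum_j a^j g_j)\big)_y$, we get $\sum_j a^j\partial_{\theta^j}\vec W_{\vec\theta}\big|_{\vec\theta=0}=(B_y)_{y\in{\cal Y}}\in{\cal L}_{1,\vec W}$, so the Leibniz rule applied to $P^k[\vec W_{\vec\theta}]\cdot P=\langle u_{{\cal X}}|W_{\vec\theta,y_k}\cdots W_{\vec\theta,y_1}|P\rangle$ gives
\[
\sum_j a^j\,\partial_{\theta^j}\big(P^k[\vec W_{\vec\theta}]\cdot P\big)(y_k,\ldots,y_1)\Big|_{\vec\theta=0}=\sum_{m=1}^k\langle u_{{\cal X}}|\,W_{y_k}\cdots W_{y_{m+1}}\,B_{y_m}\,W_{y_{m-1}}\cdots W_{y_1}\,|P\rangle.
\]
For $(1)\Rightarrow(2)$ it suffices, by linearity, to treat separately $(B_y)_y\in{\cal L}_{P,\vec W}$ and $(B_y)_y=(\alpha_y(A))_y\in{\cal L}_{2,P,\vec W}$. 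In the first case every summand already vanishes: $W_{y_{m-1}}\cdots W_{y_1}P$ represents an element of ${\cal V}^{m-1}(P)\subseteq{\cal V}^{k_{(P,\vec W)}}(P)$, so $B_{y_m}W_{y_{m-1}}\cdots W_{y_1}P\in\Ker P^{k_{\vec W}}[\vec W]$, this stays inside $\Ker P^{k_{\vec W}}[\vec W]$ under left multiplication by further $W_y$'s (Lemma \ref{L-1-25}), and $\langle u_{{\cal X}}|$ annihilates $\Ker P^{1}[\vec W]\supseteq\Ker P^{k_{\vec W}}[\vec W]$. In the second case the sum telescopes to $\langle u_{{\cal X}}|W_{y_k}\cdots W_{y_1}|AP\rangle-\langle A^{T}u_{{\cal X}}|W_{y_k}\cdots W_{y_1}|P\rangle=0$. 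The implications $(2)\Rightarrow(3)$ and $(3)\Rightarrow(4)$ are trivial.

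It remains to prove $(4)\Rightarrow(1)$. Summing the displayed identity over the most recent symbol $y_k$ turns the length-$k$ process into the length-$(k-1)$ one (since $\langle u_{{\cal X}}|\,|\vec W_{\vec\theta}|=\langle u_{{\cal X}}|$), so $(4)$ forces the derivative to vanish at every length $\le k_{\vec W}+k_{(P,\vec W)}+1$, in particular at that length. Now, exactly as in the proof of Theorem \ref{L-9-27}, fix $\vec y_1,\ldots,\vec y_{d_{(P,\vec W)}}\in\cup_{k\le k_{(P,\vec W)}}{\cal Y}^{k}$ whose classes $[W_{\vec y_i}P]$ form a basis of ${\cal V}^{k_{(P,\vec W)}}(P)$, introduce the rank-$d_{(P,\vec W)}$ matrix $M$ representing $P^{k_{\vec W}+k_{(P,\vec W)}}[\vec W]\cdot P$ on ${\cal V}^{k_{(P,\vec W)}}(P)$ with its factorization $M=VU_{1}$, and for each $y\in{\cal Y}$ the matrix $M_y$ representing the insertion of $W_y$, so that $M_y=VU_{1}[W_y]U_{1}^{-1}$. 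Splitting the length-$(k_{\vec W}+k_{(P,\vec W)}+1)$ derivative according to the position $m$ at which $B$ is inserted and contracting against these isomorphisms, one extracts from its vanishing a single matrix $A$ on ${\cal V}^{k_{(P,\vec W)}}(P)$ whose commutators $[W_y,A]$ account for the part of $(B_y)_y$ detected by the $M_y$; lifting $A$ arbitrarily to ${\cal V}_{{\cal X}}$ and adjusting it by elements of $\Ker P^{k_{\vec W}}[\vec W]$ and of the corresponding annihilator to arrange $A|P\rangle=0$ and $A^{T}|u_{{\cal X}}\rangle=0$, one checks that $(B_y)_y-(\alpha_y(A))_y$ maps ${\cal V}^{k_{(P,\vec W)}}(P)+\Ker P^{k_{\vec W}}[\vec W]$ into $\Ker P^{k_{\vec W}}[\vec W]$, i.e. lies in ${\cal L}_{P,\vec W}$; applying $\vec{W}_*^{-1}$ then gives $(1)$.

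The main obstacle is this last step. The derivative in $(4)$ is a single sum over all insertion positions of $B$, so $B$ is not localized a priori, and disentangling it — while certifying that the remainder lands in ${\cal L}_{P,\vec W}$ and not merely in some larger indistinguishable subspace — is where the real bookkeeping lies. I expect the cleanest route is to first dispose of the \emph{faithful} case $\Ker P^{k_{\vec W}}[\vec W]=\{0\}$ and ${\cal V}^{k_{(P,\vec W)}}(P)={\cal V}_{{\cal X}}$, which holds almost everywhere by Lemma \ref{2-15-A} and in which ${\cal L}_{P,\vec W}=\{0\}$, so that the assertion reduces to saying that a velocity with vanishing observed derivative is an infinitesimal gauge transformation $(\,[W_y,A]\,)_{y}$; the general case then follows by carrying the argument out in the quotient ${\cal V}_{{\cal X}}/\Ker P^{k_{\vec W}}[\vec W]$ and on the $[W_y]$-invariant subspace ${\cal V}^{k_{(P,\vec W)}}(P)$, using Lemmas \ref{L-1-25} and \ref{L2T} to control the lengths.
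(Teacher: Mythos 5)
Your computation of $\partial_{\theta^j}W_{\vec\theta,y}|_{\vec\theta=0}$ and your proof of $(1)\Rightarrow(2)$ are correct and essentially identical to the paper's argument: the ${\cal L}_{P,\vec W}$ part is killed term by term because $B_{y_m}$ sends the representative $W_{y_{m-1}}\cdots W_{y_1}P$ of an element of ${\cal V}^{k_{(P,\vec W)}}(P)$ into $\Ker P^{k_{\vec W}}[\vec W]$, which is $[W_y]$-invariant by Lemma \ref{L-1-25} and annihilated by $\langle u_{{\cal X}}|$, while the commutator part telescopes and vanishes by $A|P\rangle=0$ and $A^{T}|u_{{\cal X}}\rangle=0$. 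The marginalization remark that reduces condition $(4)$ to the critical length $k_{\vec W}+k_{(P,\vec W)}+1$ is also sound, using $\sum_y B_y^{T}|u_{{\cal X}}\rangle=0$.

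The gap is in $(4)\Rightarrow(1)$, and you have correctly located it but not closed it. You propose to rerun the proof of Theorem \ref{L-9-27} at the level of first derivatives and to ``extract a single matrix $A$'' from the vanishing of $\sum_{m}\langle u_{{\cal X}}|W_{y_k}\cdots B_{y_m}\cdots W_{y_1}|P\rangle$; but that extraction \emph{is} the content of the implication, and nothing in your sketch explains how a perturbation that enters only through a sum over all insertion positions can be localized into one intertwiner plus a kernel-valued remainder. The paper avoids this bookkeeping entirely by a device you do not use: it forms the linearized family $\vec W'_{\theta,y}:=W_y+\theta\,\frac{d}{d\theta}\vec W_{\theta,y}|_{\theta=0}$, notes that the hypothesis makes its observed distribution at the critical length agree with that of $\vec W$, and then invokes the already-proved \emph{global} equivalence Theorem \ref{L-9-27} for each small $\theta$ to obtain an invertible $T_\theta$ with $T_\theta P=P$ and $\vec W'_{\theta,y}=T_\theta^{-1}(W_y+B_{\theta,y})T_\theta$, where $(B_{\theta,y})_y$ maps ${\cal V}^{k_{(P,\vec W)}}(P)+\Ker P^{k_{\vec W}}[\vec W]$ into the kernel. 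Differentiating this conjugation at $\theta=0$ produces $\frac{d}{d\theta}\vec W_{\theta,y}|_{0}=[W_y,A]+B_y$ with $A=\dot T_0$, and the side conditions come out automatically: $A|P\rangle=0$ from $T_\theta P=P$, and $A^{T}|u_{{\cal X}}\rangle=c|u_{{\cal X}}\rangle$ from the normalization of transition matrices, with $c=0$ forced by the vanishing of the derivative itself. Your alternative of ``adjusting $A$ by elements of the annihilator to arrange $A|P\rangle=0$ and $A^{T}|u_{{\cal X}}\rangle=0$'' is likewise asserted rather than justified. The overall architecture of your proposal is salvageable, but the hard direction needs either this reduction to Theorem \ref{L-9-27} or a genuine disentangling argument that the proposal does not supply.
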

Theorem \ref{L27-2} is shown in Appendix \ref{A1}.
Since we have the relation {\bf (B2)} $\Rightarrow$ {\bf (B4)'} $\Rightarrow$ {\bf (B4)},
Theorem \ref{L27-2} implies Theorem \ref{sum-L27-2}.
When the vector $\vec{a}\in \mathbb{R}^l$ is identified with $\sum_{j=1}^l a^j g_j$, 
the local equivalence class at $\theta$ with the initial distribution $P$ is given as 
the space 
\begin{align*}
&{\cal G}_1(({\cal Y}\times {\cal X}^{2})_{\vec{W}})/
({\cal N}_P(({\cal Y}\times {\cal X}^{2})_{\vec{W}})
+{\cal N}_{2,P}(({\cal Y}\times {\cal X}^{2})_{\vec{W}})) \\
=&
{\cal G}(({\cal Y}\times {\cal X}^{2})_{\vec{W}})/
({\cal N}(({\cal Y}\times {\cal X}^{2})_{\vec{W}})
+{\cal N}_P(({\cal Y}\times {\cal X}^{2})_{\vec{W}})
+{\cal N}_{2,P}(({\cal Y}\times {\cal X}^{2})_{\vec{W}})).
\end{align*}

The above discussion addresses the equivalence when the initial distribution is fixed to be $P$.
However, in the asymptotic case, the distribution converges to the stationary distribution $P_{\vec{W}_{\vec{\theta}}}$.
To address this case, we have the following theorem
as the refined version of Theorem \ref{sum-27-3}.

\if0
We discuss the asymptotic behavior.
We say that two vectors $\vec{a},\vec{b} \in \mathbb{R}^{l}$ are locally and asymptotically equivalent at $\vec{\theta}_0$
when $\sum_{j=1}^l a^j \frac{\partial }{\partial \theta^j}P^k[\vec{W}_{\vec{\theta}}] 
\cdot P_{\vec{W}_{\vec{\theta}}}|_{\vec{\theta}=\vec{\theta}_0}
=\sum_{j=1}^l b^j \frac{\partial }{\partial \theta^j}P^k[\vec{W}_{\vec{\theta}}]
\cdot P_{\vec{W}_{\vec{\theta}}}
|_{\vec{\theta}=\vec{\theta}_0}$ 
for any positive integer $k$.
\fi

\begin{theorem}\Label{27-3}
Given an irreducible ${\cal Y}$-indexed transition matrix $\vec{W}$, 
the following conditions are equivalent for 
functions $g_1, \ldots, g_l \in {\cal G}_{1}(({\cal Y}\times {\cal X}^{2})_{\vec{W}})$ 
and a vector $\vec{a} \in \mathbb{R}^l$
under the same condition as Theorem \ref{L27-2}.
\begin{description}
\item[\bf (C1)]
The function $\sum_{j=1}^l a^j g_j \in {\cal G}_{1}(({\cal Y}\times {\cal X}^{2})_{\vec{W}})$
belongs to $
{\cal N}_2(({\cal Y}\times {\cal X}^{2})_{\vec{W}})
+{\cal N}_{P_{\vec{W}}}(({\cal Y}\times {\cal X}^{2})_{\vec{W}})$.

\item[\bf (C2)]
The relation $\sum_{j=1}^l a^j \frac{\partial }{\partial \theta^j}
P^{k}[\vec{W}_{\vec{\theta}}]
\cdot P_{\vec{W}_{\vec{\theta}}} \Big|_{\vec{\theta}=0}
= 0$ holds for any positive integer $k$.

\item[\bf (C3)]
The relation $\sum_{j=1}^l a^j \frac{\partial }{\partial \theta^j}
P^{k_{\vec{W}}+k_{(P_{\vec{W}},\vec{W})}+1}
[\vec{W}_{\vec{\theta}}]
\cdot P_{\vec{W}_{\vec{\theta}}} \Big|_{\vec{\theta}=0} = 0$ holds.

\item[\bf (C4)]
The relation $\sum_{j=1}^l a^j \frac{\partial }{\partial \theta^j}
P^{k}[\vec{W}_{\vec{\theta}}]
\cdot P_{\vec{W}_{\vec{\theta}}} \Big|_{\vec{\theta}=0} = 0$ holds
with a certain integer $k \ge k_{(W,V)}+k_{(P_{(W,V)},(W,V))}+1$.
\end{description}
\end{theorem}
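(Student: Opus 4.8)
The plan is to reduce Theorem~\ref{27-3} to Theorem~\ref{L27-2} applied to the stationary distribution $P=P_{\vec W}$ (which is full-rank because $\vec W$ is irreducible), the only new ingredient being the $\vec\theta$-derivative of the stationary distribution. Put $g:=\sum_{j=1}^l a^j g_j\in{\cal G}_1(({\cal Y}\times{\cal X}^2)_{\vec W})$; since conditions (2)--(4) involve only the first differential at $\vec\theta=0$, they depend on $g$ alone, and because $g\in{\cal G}_1$ the tangent vector of the family is exactly the $m$-representation $\dot W_y:=(\vec W_* g)_y$ (the Perron--Frobenius normalisation factors contribute nothing: $\sum_y(\vec W_* g)_y^T|u_{{\cal X}}\rangle=0$ forces $\frac{\partial}{\partial\theta^j}\lambda_{\vec\theta}|_0=0$, and the eigenvector derivative is proportional to $u_{{\cal X}}$, giving a vanishing commutator). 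Writing $\dot P:=\sum_j a^j\frac{\partial}{\partial\theta^j}P_{\vec W_{\vec\theta}}|_{\vec\theta=0}$, differentiation of $|\vec W_{\vec\theta}|P_{\vec W_{\vec\theta}}=P_{\vec W_{\vec\theta}}$ and of $\langle u_{{\cal X}}|P_{\vec W_{\vec\theta}}\rangle=1$ at $\vec\theta=0$ shows that $\dot P$ is the unique vector with $(I-|\vec W|)\dot P=(\sum_y\dot W_y)P_{\vec W}$ and $\langle u_{{\cal X}}|\dot P\rangle=0$; in particular $\dot P$ is determined by $\sum_y\dot W_y$. The product rule then expresses the quantity in condition~(2), componentwise in $\vec y=(y_1,\ldots,y_k)\in{\cal Y}^k$, as
\begin{align}
\sum_{j=1}^l a^j\frac{\partial}{\partial\theta^j}\big(P^k[\vec W_{\vec\theta}]\cdot P_{\vec W_{\vec\theta}}\big)(\vec y)\Big|_{0}
&=\sum_{i=1}^k\langle u_{{\cal X}}|W_{y_k}\cdots\dot W_{y_i}\cdots W_{y_1}|P_{\vec W}\rangle \nonumber\\
&\quad+\langle u_{{\cal X}}|W_{y_k}\cdots W_{y_1}|\dot P\rangle,\Label{27-3-split}
\end{align}
whose first summand is precisely the quantity appearing in Theorem~\ref{L27-2} for $P=P_{\vec W}$.

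The key computation is a gauge invariance. For $A\in{\cal M}({\cal V}_{{\cal X}})$ with $A^T|u_{{\cal X}}\rangle=0$, adding to $g$ the element of ${\cal N}_2(({\cal Y}\times{\cal X}^2)_{\vec W})$ whose $m$-representation is $([W_y,A])_y$ shifts $\dot W_y$ by $[W_y,A]$. By the telescoping identity $\sum_{i=1}^k W_{y_k}\cdots W_{y_{i+1}}[W_{y_i},A]W_{y_{i-1}}\cdots W_{y_1}=W_{y_k}\cdots W_{y_1}\,A-A\,W_{y_k}\cdots W_{y_1}$ together with $A^T|u_{{\cal X}}\rangle=0$, the first summand of \eqref{27-3-split} changes by $\langle u_{{\cal X}}|W_{y_k}\cdots W_{y_1}\,A\,|P_{\vec W}\rangle$; simultaneously $\sum_y\dot W_y$ changes by $[\,|\vec W|,A]$, so $\dot P$ changes by $-AP_{\vec W}$ (using $|\vec W|P_{\vec W}=P_{\vec W}$ and $A^T|u_{{\cal X}}\rangle=0$), which changes the second summand by $-\langle u_{{\cal X}}|W_{y_k}\cdots W_{y_1}\,A\,|P_{\vec W}\rangle$; the two cancel. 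Hence \eqref{27-3-split} is unchanged when any ${\cal N}_2$-element is added to $g$. This is exactly why the indistinguishable subspace here is the one of Theorem~\ref{L27-2} with $P=P_{\vec W}$, namely ${\cal N}_{P_{\vec W}}(({\cal Y}\times{\cal X}^2)_{\vec W})+{\cal N}_{2,P_{\vec W}}(({\cal Y}\times{\cal X}^2)_{\vec W})$, enlarged to ${\cal N}_{P_{\vec W}}(({\cal Y}\times{\cal X}^2)_{\vec W})+{\cal N}_2(({\cal Y}\times{\cal X}^2)_{\vec W})$. To complete (1)$\Rightarrow$(2) one checks, as in Theorem~\ref{L27-2}, that an element of ${\cal N}_{P_{\vec W}}(({\cal Y}\times{\cal X}^2)_{\vec W})$ makes both summands of \eqref{27-3-split} vanish: the first because every telescoped term lies in $\Ker P^{k_{\vec W}}[\vec W]$ (by Lemma~\ref{L-1-25}, the definition of ${\cal L}_{P_{\vec W},\vec W}$, and $[P_{\vec W}]\in{\cal V}^{k_{(P_{\vec W},\vec W)}}(P_{\vec W})$), hence is killed by $P^k[\vec W]$ for $k\ge k_{\vec W}$; the second because $\dot P\in\Ker P^{k_{\vec W}}[\vec W]$ (from $(I-|\vec W|)\dot P\in\Ker P^{k_{\vec W}}[\vec W]$, $\langle u_{{\cal X}}|\dot P\rangle=0$, $\Ker P^{k_{\vec W}}[\vec W]\subset\Ker P^1[\vec W]\subset|u_{{\cal X}}\rangle^{\perp}$, and $\langle u_{{\cal X}}|P_{\vec W}\rangle=1$).

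The implications (2)$\Rightarrow$(3) and (3)$\Rightarrow$(4) are immediate. For (4)$\Rightarrow$(1): given that \eqref{27-3-split} vanishes for one $k\ge k_{\vec W}+k_{(P_{\vec W},\vec W)}+1$, use $\langle u_{{\cal X}}|\dot P\rangle=0$ to pick $A$ with $A^T|u_{{\cal X}}\rangle=0$ and $AP_{\vec W}=-\dot P$, and let $g^{(2)}\in{\cal N}_2(({\cal Y}\times{\cal X}^2)_{\vec W})$ have $m$-representation $([W_y,A])_y$. Then $g':=g-g^{(2)}$ has vanishing stationary-distribution derivative, and by gauge invariance \eqref{27-3-split} still vanishes for $g'$ at the same $k$; since its second summand is now $0$, $g'$ satisfies condition~(4) of Theorem~\ref{L27-2} with $P=P_{\vec W}$, so Theorem~\ref{L27-2} gives $g'\in{\cal N}_{P_{\vec W}}(({\cal Y}\times{\cal X}^2)_{\vec W})+{\cal N}_{2,P_{\vec W}}(({\cal Y}\times{\cal X}^2)_{\vec W})\subset{\cal N}_{P_{\vec W}}(({\cal Y}\times{\cal X}^2)_{\vec W})+{\cal N}_2(({\cal Y}\times{\cal X}^2)_{\vec W})$, whence $g=g'+g^{(2)}$ lies in the same space, which is condition~(1). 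The fact that vanishing at one such $k$ already forces vanishing at all $k$ (needed to match the $k$ produced by Theorem~\ref{L27-2}(4)) is the persistence-of-kernel/image saturation used in the proofs of Lemmas~\ref{L1T} and \ref{L2T} and of Theorem~\ref{L-9-27}.

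The step I expect to be the main obstacle is the bookkeeping at the level of supports and quotients. One must verify that the matrix $A$ in the gauge-fixing step can be chosen so that $([W_y,A])_y$ is compatible with the support $({\cal Y}\times{\cal X}^2)_{\vec W}$, so that it genuinely is the $m$-representation of some $g^{(2)}\in\vec W_*^{-1}({\cal L}_{2,\vec W})={\cal N}_2(({\cal Y}\times{\cal X}^2)_{\vec W})$ — this amounts to choosing $A$ only modulo the part acting trivially on ${\cal V}^{k_{(P_{\vec W},\vec W)}}(P_{\vec W})+\Ker P^{k_{\vec W}}[\vec W]$, and using the hypothesis of (4) to see that $\dot P$ lies in the reachable subspace $\{AP_{\vec W}\}$ — and one must throughout keep the two quotients in play mutually consistent, namely the $m$-representation quotient by ${\cal N}(({\cal Y}\times{\cal X}^2)_{\vec W})$ and the state-space quotient ${\cal V}_{{\cal X}}/\Ker P^{k_{\vec W}}[\vec W]$. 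This is exactly where the detailed argument of Appendix~\ref{A1} has to be rerun rather than merely cited; the only new input beyond Appendix~\ref{A1} is the identity $(I-|\vec W|)\dot P=(\sum_y\dot W_y)P_{\vec W}$ together with the cancellation displayed above.
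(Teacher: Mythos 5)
Your proof is correct and, for the hard implication, takes a genuinely different route from the paper. The paper's Appendix~B argues $(1)\Rightarrow(2)$ by taking a general element $B_y+[W_y,A]$ of the $m$-representation of ${\cal N}_{P_{\vec W}}+{\cal N}_2$, deriving the relation $|Q\rangle+A|P_{\vec W}\rangle-(\sum_yB_y)|P_{\vec W}\rangle=c'|P_{\vec W}\rangle$ for $Q=\dot P$ from the stationarity equation and showing $c'=c$, and then telescoping; this is exactly your cancellation between the two summands of \eqref{27-3-split}, just carried out in one pass rather than isolated as a linearity-plus-gauge-invariance statement. The real divergence is in $(3)/(4)\Rightarrow(1)$: the paper re-runs the machinery of Theorem~\ref{L-9-27} on the first-order perturbation $\vec W+\theta\dot{\vec W}$, extracts $T_\theta$ with $T_\theta P_{\vec W}=P_{\vec W_\theta'}$, and differentiates to get $A$ and $B_y$ directly, whereas you first gauge-fix $\dot P=0$ by subtracting the ${\cal N}_2$-element built from a rank-one $A$ with $A^T|u_{\cal X}\rangle=0$, $AP_{\vec W}=-\dot P$, and then invoke Theorem~\ref{L27-2} as a black box with $P=P_{\vec W}$. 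Your reduction is more modular and makes transparent why the indistinguishable subspace enlarges from ${\cal N}_{2,P_{\vec W}}$ to ${\cal N}_2$ when the initial distribution is allowed to track the stationary one; the paper's version buys a self-contained derivation that keeps explicit track of the constant $c$ in $A^T|u_{\cal X}\rangle=c|u_{\cal X}\rangle$. The one caveat you rightly flag --- that $([W_y,A])_y$ for your chosen $A$ must lie in the image of $\vec W_*$, i.e.\ be supported on $({\cal Y}\times{\cal X}^2)_{\vec W}$, in order to define $g^{(2)}\in{\cal N}_2(({\cal Y}\times{\cal X}^2)_{\vec W})$ --- is not resolved in the paper either (its $[W_y,A]$ and $B_y$ are likewise only jointly, not individually, guaranteed to have the right support), so it is a shared looseness rather than a defect specific to your argument; under Condition E2 it disappears. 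Your closing worry about matching the integer $k$ is unnecessary: condition (4) of Theorem~\ref{L27-2} only requires vanishing at a single $k\ge k_{\vec W}+k_{(P_{\vec W},\vec W)}+1$, which you already have.
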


Theorem \ref{27-3} is shown in Appendix \ref{A2}.
Since we have the relation {\bf (C2)} $\Rightarrow$ {\bf (C4)'} $\Rightarrow$ {\bf (C4)},
Theorem \ref{27-3} implies Theorem \ref{sum-27-3}.
Due to this theorem, under the above identification, 
the local and asymptotic equivalence class at $\theta$ is given as 
the space 
\begin{align}
&{\cal G}_1(({\cal Y}\times {\cal X}^{2})_{\vec{W}})/
({\cal N}_{P_{\vec{W}}}(({\cal Y}\times {\cal X}^{2})_{\vec{W}})+{\cal N}_2(({\cal Y}\times {\cal X}^{2})_{\vec{W}}))
\nonumber \\
=&
{\cal G}(({\cal Y}\times {\cal X}^{2})_{\vec{W}})/
({\cal N}(({\cal Y}\times {\cal X}^{2})_{\vec{W}})
+{\cal N}_{P_{\vec{W}}}(({\cal Y}\times {\cal X}^{2})_{\vec{W}})
+{\cal N}_2(({\cal Y}\times {\cal X}^{2})_{\vec{W}}))
\nonumber \\
=&
{\cal G}(({\cal Y}\times {\cal X}^{2})_{\vec{W}})/
({\cal N}_3(({\cal Y}\times {\cal X}^{2})_{\vec{W}})
+{\cal N}_{P_{\vec{W}}}(({\cal Y}\times {\cal X}^{2})_{\vec{W}})),
\Label{eq1-26}
\end{align}
where 
\begin{align}
{\cal N}_3(({\cal Y}\times {\cal X}^{2})_{\vec{W}}):=
{\cal N}(({\cal Y}\times {\cal X}^{2})_{\vec{W}})+
{\cal N}_2(({\cal Y}\times {\cal X}^{2})_{\vec{W}}).
\end{align}

When the generators of our exponential family
are not linearly independent in the sense of 
${\cal G}(({\cal Y}\times {\cal X}^{2})_{\vec{W}})/
({\cal N}(({\cal Y}\times {\cal X}^{2})_{\vec{W}})
+{\cal N}_{P_{\vec{W}}}(({\cal Y}\times {\cal X}^{2})_{\vec{W}})
+{\cal N}_2(({\cal Y}\times {\cal X}^{2})_{\vec{W}}))
$, the parametrization around $\vec{W}$
does not express distinguishable information.
That is, the parametrization is considered to be redundant.

\begin{lemma}\Label{L2-13}
The space ${\cal N}_3(({\cal Y}\times {\cal X}^{2})_{\vec{W}})$ 
is characterized as
\begin{align}
&\vec{W}_* ({\cal N}_3(({\cal Y}\times {\cal X}^{2})_{\vec{W}})) \nonumber \\
=&
\big\{ (\alpha_y(A) + c W_y)_{y\in {\cal Y}}  \big| 
c\in \mathbb{R},~
A \in {\cal M}({\cal V}_{{\cal X}}),~
\langle u_{{\cal X}}| A| u_{{\cal X}}\rangle=0 \big\} .
\Label{eq2-13}
\end{align}
\end{lemma}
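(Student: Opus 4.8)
The plan is to unwind the definition ${\cal N}_3 = {\cal N} + {\cal N}_2$ with ${\cal N}_2 = \vec{W}_*^{-1}({\cal L}_{2,\vec{W}})$, and to use that $\vec{W}_*$ is invertible on ${\cal G}(({\cal Y}\times{\cal X}^2)_{\vec{W}})$ (it is multiplication by a function that is nonzero on the support) to write $\vec{W}_*({\cal N}_3) = \vec{W}_*({\cal N}) + \vec{W}_*({\cal N}_2) = \vec{W}_*({\cal N}) + {\cal L}_{2,\vec{W}}$. First I would compute $\vec{W}_*({\cal N})$. An element of ${\cal N}$ has the form $g(y,x,x') = f(x) - f(x') + c$; letting $F$ be the diagonal matrix with entries $f(x)$, an entrywise check gives $(\vec{W}_* g)_y = [F,W_y] + c W_y = \alpha_y(-F) + c W_y$, since $[F,W_y] = -[W_y,F] = -\alpha_y(F)$. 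As $F$ runs over all diagonal matrices, this yields $\vec{W}_*({\cal N}) = \{ (\alpha_y(A) + c W_y)_{y\in{\cal Y}} \mid A \text{ diagonal},\ c\in\mathbb{R}\}$.

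Combining this with ${\cal L}_{2,\vec{W}} = \{(\alpha_y(A))_{y\in{\cal Y}} \mid A^T|u_{{\cal X}}\rangle = 0\}$ and the additivity of $A \mapsto (\alpha_y(A))_{y\in{\cal Y}}$, I would then have $\vec{W}_*({\cal N}_3) = \{ (\alpha_y(A_1 + A_2) + c W_y)_{y\in{\cal Y}} \mid A_1 \text{ diagonal},\ A_2^T|u_{{\cal X}}\rangle = 0,\ c\in\mathbb{R}\}$. The key elementary fact is that $\{A_1 + A_2\}$ exhausts ${\cal M}({\cal V}_{{\cal X}})$: given an arbitrary $B$, set $A_1(x|x) := \sum_{x'} B(x'|x)$ and $A_2 := B - A_1$, so that $A_2$ has vanishing column sums, i.e. $A_2^T|u_{{\cal X}}\rangle = 0$. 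Hence $\vec{W}_*({\cal N}_3) = \{ (\alpha_y(B) + c W_y)_{y\in{\cal Y}} \mid B \in {\cal M}({\cal V}_{{\cal X}}),\ c\in\mathbb{R}\}$. To land on the right-hand side of \eqref{eq2-13} it remains to observe that the constraint $\langle u_{{\cal X}}|B|u_{{\cal X}}\rangle = 0$ is free: replacing $B$ by $B' := B - \frac{1}{d}\langle u_{{\cal X}}|B|u_{{\cal X}}\rangle\, I$ gives $\langle u_{{\cal X}}|B'|u_{{\cal X}}\rangle = 0$ because $\langle u_{{\cal X}}|I|u_{{\cal X}}\rangle = d$, while $\alpha_y(B') = \alpha_y(B)$ since $I$ commutes with every $W_y$; the reverse inclusion is trivial. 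This proves \eqref{eq2-13}.

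The routine parts are the entrywise identity $(\vec{W}_* g)_y = [F,W_y] + cW_y$ and the diagonal/column-sum decomposition of a matrix. The one point that needs a little care — and the only genuine obstacle — is the bookkeeping between the two pictures: ${\cal N}_3$ lives in ${\cal G}(({\cal Y}\times{\cal X}^2)_{\vec{W}})$, i.e. functions on the support, whereas the right-hand side of \eqref{eq2-13} is written as tuples of full $d\times d$ matrices, so all the identities above must be read after restriction to $({\cal Y}\times{\cal X}^2)_{\vec{W}}$. This is harmless precisely because $\vec{W}_*$ is a bijection on ${\cal G}(({\cal Y}\times{\cal X}^2)_{\vec{W}})$, so $\vec{W}_*({\cal N}_2) = {\cal L}_{2,\vec{W}}$ in this sense, and because the operations used ($A \mapsto \alpha_y(A)$, subtraction of a multiple of $I$, the diagonal/column-sum splitting) are global matrix operations whose restriction to the support is insensitive to how the off-support entries of the matrices are chosen. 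Everything else is the verification sketched above.
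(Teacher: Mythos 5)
Your proof is correct and follows essentially the same route as the paper's: compute $\vec{W}_*({\cal N}(({\cal Y}\times {\cal X}^{2})_{\vec{W}}))$ as $([W_y,-D_f]+cW_y)_{y}$ for diagonal $D_f$, add ${\cal L}_{2,\vec{W}}$, note that diagonal matrices together with column-sum-zero matrices exhaust ${\cal M}({\cal V}_{{\cal X}})$, and then normalize away the $\langle u_{{\cal X}}|A|u_{{\cal X}}\rangle$ component. The only difference is cosmetic but in your favor: you normalize by subtracting a multiple of the identity, which manifestly commutes with every $W_y$, whereas the paper subtracts a multiple of $|u_{{\cal X}}\rangle\langle u_{{\cal X}}|$, whose commutation with $W_y$ is not automatic.
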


\begin{proofof}{Lemma \ref{L2-13}}
For a function $f$ on ${\cal X}$, we define the diagonal matrix $D_f$ on $V_{{\cal X}}$
whose diagonal element is $f(x)$.
The, an element 
$ |u_{{\cal Y}}\rangle \otimes
(c|u_{{\cal X}}\rangle \langle u_{{\cal X}}|
+|f\rangle \langle u_{{\cal X}}|
-|u_{{\cal X}}\rangle \langle f|)
\in {\cal N}(({\cal Y}\times {\cal X}^{2})_{\vec{W}}) $ satisfies that
\begin{align}
& \vec{W}_* ( |u_{{\cal Y}}\rangle\otimes
(c|u_{{\cal X}}\rangle \langle u_{{\cal X}}|
+|f\rangle \langle u_{{\cal X}}|
-|u_{{\cal X}}\rangle \langle f|) )
=
([W_y,-D_f]+c W_y )_{y \in {\cal Y}}.
\end{align}
Therefore, any element of LHS of \eqref{eq2-13}
can be written as
$([W_y,A -D_f ]+c W_y 
)_{y \in {\cal Y}}$ by using 
a function $f$ on ${\cal X}$, $c \in \mathbb{R}$, and
a matrix $A\in {\cal M}({\cal X})$ satisfying $ A^T|u_{\cal X}\rangle=0$.
Since the matrix 
 $A':= A -D_f-  
\frac{1}{d^2} \langle u_{{\cal X}}| (A -D_f) |u_{{\cal X}} \rangle
 |u_{{\cal X}} \rangle \langle u_{{\cal X}}| $
satisfies 
$ \langle u_{{\cal X}}| A' |u_{{\cal X}} \rangle=0$
and $[W_y,A -D_f ]=[W_y,A']$.
Then, we obtain the desired statement.
\end{proofof}

\section{Construction of linearly independent generators}\Label{s6-3}
In this section, we construct generators for the full model
such that they are linear independent in the sense of the quotient space
$ {\cal G}(({\cal Y}\times {\cal X}^{2})_{\vec{W}})/
({\cal N}(({\cal Y}\times {\cal X}^{2})_{\vec{W}})
+{\cal N}_{P_{\vec{W}}}(({\cal Y}\times {\cal X}^{2})_{\vec{W}})
+{\cal N}_2(({\cal Y}\times {\cal X}^{2})_{\vec{W}}))$.

For this aim, 
we consider the following conditions for $\vec{W}$.
\begin{description}
\item[\bf (E1)]
$\Ker P^{k_{\vec{W}}}[\vec{W}]=\{0\}$ and ${\cal V}^{k_{P,\vec{W}}}(P)={\cal V}_{{\cal X}}$.
\item[\bf (E2)]
All of the components of $W_y$ are non zero, i.e.,
$({\cal Y}\times {\cal X}^{2})_{\vec{W}}:={\cal Y}\times {\cal X}^{2}$.
\end{description}
Lemma \ref{2-15-A} guarantees that Condition {\bf (E1)} holds almost everywhere.

Under these conditions, 
using the notations $d:=d$ and $d_Y:=|{\cal Y}|$,
we consider the full parameter model, and choose $k$ to be greater than or equal to $k_{\vec{W}}$.
So, {\bf (E1)} implies $ {\cal L}_{P_{\vec{W}},\vec{W}} =\{0\}$, i.e., 
${\cal N}_{P_{\vec{W}}}(({\cal Y}\times {\cal X}^{2})_{\vec{W}})=\{0\}$.
{\bf (E2)} guarantees that the dimension of ${\cal L}_{2,\vec{W}}$ is $ d^2-d$.
So, {\bf (E2)} guarantees that 
the dimension of ${\cal N}_{2}(({\cal Y}\times {\cal X}^{2})_{\vec{W}})$ is $d^2-d$.
Since the dimension of ${\cal N}(({\cal Y}\times {\cal X}^{2})_{\vec{W}})$ is $d $ and
the dimension of ${\cal G}(({\cal Y}\times {\cal X}^{2})_{\vec{W}})$ is $ d^2\cdot d_Y$,
the dimension of the quotient space given in \eqref{eq1-26} is 
$l:=d^2\cdot d_Y- (d^2-d)- d
= d^2\cdot (d_Y-1) $.
Since Condition {\bf (E2)} holds almost everywhere as well,
when we fix ${\cal X}$ and ${\cal Y}$,
these discussions show that the dimension of the tangent space
${\cal G}(({\cal Y}\times {\cal X}^{2})_{\vec{W}})/
({\cal N}_3(({\cal Y}\times {\cal X}^{2})_{\vec{W}})+{\cal N}_{P_{\vec{W}}}(({\cal Y}\times {\cal X}^{2})_{\vec{W}}))$ 
is $d^2\cdot (d_Y-1)$ almost everywhere.
However, in several points, the dimension is strictly smaller than this value.
We call such points {\it singular points}.

Next, at the neighborhood of a non-singular point, we give generators. 
For this aim, in addition to Conditions {\bf (E1)} and {\bf (E2)}, we assume the following condition.
\begin{description}
\item[\bf (E3)]
There exist two elements $y_0,y_1\in {\cal Y}$ such that
(1) the map $\alpha_{y_0}$ is injective on the set $\{ A \in {\cal M}({\cal V}_{\cal X}) |  A^T |u_{{\cal X}}\rangle=0 \}$
and (2) the map $A\mapsto (\alpha_{y_0}(A),\alpha_{y_1}(A))$ is injective on the set $\{ A \in {\cal M}({\cal V}_{\cal X}) | \langle u_{{\cal X}}| A|u_{{\cal X}}\rangle=0 \}$.
\end{description}

For Condition {\bf (E3)}, we have the following lemma.
\begin{lemma}\Label{L1-26-2}
Assume that $W_{y_0}^T$ and $W_{y_1}^T$ have $d$ distinct eigenvalues and their eigenvectors
$f_1, \ldots, f_{d}$ and $f_1', \ldots, f_{d}'$, respectively.
Also, assume that $u_{{\cal X}}=\sum_{j=1}^d a^j f_j$ and $a^j \neq 0$ for any $j$. 
Then, the condition (1) of Condition {\bf (E3)} holds.
Additionally, we assume that the eigenvectors $f_1', \ldots, f_{d}'$ 
are distinct from the eigenvectors $f_1, \ldots, f_{d}$.
Then, the condition (2) of Condition {\bf (E3)} holds.
\end{lemma}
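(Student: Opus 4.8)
The plan is to translate the two injectivity assertions of Condition~E3 into statements about commutants, and then use that a matrix with $d$ distinct eigenvalues is non-derogatory, so its commutant is exactly the algebra of polynomials in it.

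For part~(1), note that $\ker\alpha_{y_0}=\{A\in\cM({\cal V}_{{\cal X}})\mid [W_{y_0},A]=0\}$, and that since $W_{y_0}^T$ (hence $W_{y_0}$) has $d$ distinct eigenvalues its minimal polynomial has degree $d$; thus every such $A$ equals $p(W_{y_0})$ for a unique polynomial $p$ with $\deg p<d$. Transposing gives $A^T=p(W_{y_0}^T)$, and writing $|u_{{\cal X}}\rangle=\sum_{j=1}^{d}a^j f_j$ with $W_{y_0}^T f_j=\mu_j f_j$ yields $A^T|u_{{\cal X}}\rangle=\sum_{j=1}^{d}a^j p(\mu_j) f_j$. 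If $A^T|u_{{\cal X}}\rangle=0$, then linear independence of the $f_j$ together with $a^j\neq 0$ forces $p(\mu_j)=0$ for every $j$; a polynomial of degree $<d$ with $d$ distinct roots vanishes identically, so $A=0$. Hence $\alpha_{y_0}$ is injective on $\{A\mid A^T|u_{{\cal X}}\rangle=0\}$, which is condition~(1).

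For part~(2), the kernel of $A\mapsto(\alpha_{y_0}(A),\alpha_{y_1}(A))$ is the common commutant $K:=\{A\mid [W_{y_0},A]=[W_{y_1},A]=0\}$, and $K\supseteq\bR I$ always. Since $\langle u_{{\cal X}}|I|u_{{\cal X}}\rangle=d\neq 0$, the line $\bR I$ meets the hyperplane $\{A\mid \langle u_{{\cal X}}|A|u_{{\cal X}}\rangle=0\}$ only at $0$; so the moment we know $K=\bR I$, any $A$ in the kernel lying on that hyperplane is $cI$ with $cd=0$, i.e.\ $A=0$, which is condition~(2). Thus part~(2) reduces entirely to proving $K=\bR I$. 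For $A\in K$, commuting with $W_{y_0}$ forces $A^T$ to be diagonal in the basis $f_1,\dots,f_d$ (the eigenspaces of $W_{y_0}^T$ being one-dimensional), and commuting with $W_{y_1}$ forces $A^T$ to be diagonal in the basis $f_1',\dots,f_d'$; hence every eigenspace of $A^T$ is simultaneously spanned by a sub-collection of the $f_j$ and by a sub-collection of the $f_j'$. The hypothesis that the $f_j'$ are distinct from the $f_j$ is then invoked to exclude any proper nonzero subspace of this kind, so $A^T$ has a single eigenvalue and is scalar.

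I expect this last exclusion to be the main obstacle. Non-parallelism of each $f_j'$ to each $f_j$ at once rules out a one-dimensional common eigenspace, but to kill higher-dimensional ones the distinctness must be used in the stronger form that no proper nonzero subspace is simultaneously a coordinate subspace of $\{f_j\}$ and of $\{f_j'\}$ --- equivalently, that the change-of-basis matrix between the two eigenbases admits no nontrivial simultaneous block structure (plain pairwise non-parallelism is visibly not enough already in dimension four). I would therefore carry out part~(2) by first making this general-position property explicit from the standing hypotheses on $\vec{W}$ --- isolating the precise genericity of the eigenvector configuration that is actually needed, and sharpening the statement of the hypothesis if required --- and then running the eigenspace argument above to conclude $K=\bR I$, whence $A=0$.
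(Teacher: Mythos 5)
Your proof of part (1) is correct and is essentially the paper's argument in different clothing: the paper notes directly that $A^T$, commuting with $W_{y_0}^T$, must be diagonal in the basis $f_1,\dots,f_d$ (the eigenspaces being one-dimensional), and that $A^T u_{{\cal X}}=\sum_j a^j\lambda_j f_j=0$ with every $a^j\neq 0$ forces all eigenvalues $\lambda_j$ of $A^T$ to vanish; your detour through ``the commutant of a non-derogatory matrix is the polynomial algebra it generates'' reaches the same conclusion by the same mechanism.

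For part (2) you have put your finger on a genuine gap, and it is a gap in the paper's own proof, not only in your attempt. The paper's argument is exactly the one you sketch: each eigenspace of $A^T$ is spanned by a subset of $\{f_i\}$ and also by a subset of $\{f_i'\}$, and the proof then simply asserts that ``to realize both conditions, the eigenspace needs to be the whole space.'' As you observe, pairwise non-proportionality of the $f_j'$ to the $f_i$ only excludes one-dimensional common blocks. For $d\le 3$ this is enough (a non-scalar diagonalizable $A^T$ has at least two eigenspaces, and one of them must then be one-dimensional), but for $d\ge 4$ it is not: taking $f_i=e_i$ and $f_1'=e_1+e_2$, $f_2'=e_1-e_2$, $f_3'=e_3+e_4$, $f_4'=e_3-e_4$, every $f_j'$ is non-proportional to every $f_i$, yet the projection $A^T$ onto $\mathrm{span}(e_1,e_2)$ along $\mathrm{span}(e_3,e_4)$ commutes with both $W_{y_0}^T$ and $W_{y_1}^T$, is not scalar, and $A-\frac{1}{2}I$ is a nonzero element of the common commutant satisfying $\langle u_{{\cal X}}|A-\frac{1}{2}I|u_{{\cal X}}\rangle=0$, so the asserted step fails as a matter of linear algebra. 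The hypothesis must therefore be read in the stronger form you propose (no proper nonzero subspace is simultaneously a coordinate subspace of both eigenbases, i.e.\ the change-of-basis matrix admits no nontrivial common block structure); with that reading your eigenspace argument closes and yields $K=\bR I$, hence condition (2). As a proof of the lemma as literally stated, your proposal is incomplete in precisely the place where the paper's proof is also incomplete.
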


Unfortunately, it is not easy to choose elements
$g_{1}, \ldots, g_{l}$ to be elements of ${\cal G}_1(({\cal Y}\times {\cal X}^{2})_{\vec{W}})$ as generators of 
an exponential family of ${\cal Y}$-indexed transition matrices.
Hence, in the following, we choose $l$ functions
$g_{1}, \ldots, g_{l}$ to be elements of 
${\cal G}(({\cal Y}\times {\cal X}^{2})_{\vec{W}})$ under Conditions {\bf (E1)}, {\bf (E2)}, and {\bf (E3)}.

For $ y_0\in {\cal Y}$, we choose $d$ functions $\bar{g}_{1,y_0}, \ldots, \bar{g}_{d,y_0}
\in {\cal G}(({\cal X}^{2})_{W_{y_0}})$
such that any non-zero linear combination of $\bar{g}_{1,y_0}, \ldots, \bar{g}_{d,y_0}$ does not belong to 
the $d^2-d$-dimensional space
\[ \{  W_{y_0}(x|x')^{-1} \alpha_{y_0}(A)(x|x')| ~A^T|u_{{\cal X}}\rangle=0 \},\]
and satisfies $\sum_{x,x'}P_{\vec{W}}(x')W_y(x|x')h(x|x')\neq 0$.
For $y_1 \in {\cal Y}$,
we choose $d^2-d$ functions $\bar{g}_{1,y_1}, \ldots, \bar{g}_{d^2-d,y_1}
\in {\cal G}(({\cal X}^{2})_{W_{y_1}})$ such that
any non-zero linear combination of $\bar{g}_{1,y_1}, \ldots, \bar{g}_{d^2-d,y_1}$ does not belong to the $d$-dimensional space
\[ \{  W_{y_1}(x|x')^{-1} \alpha_{y_1}(A)(x|x')+ c
|  A \in \Ker \alpha_{y_0} , ~c\in \mathbb{R}\}.\]
For remaining elements $y (\neq y_0,y_1)\in {\cal Y}$,
we choose $d^2$ functions $\bar{g}_{1,y}, \ldots, \bar{g}_{d^2,y}$
in $ {\cal G}(({\cal X}^2)_{W_y})$ such that $\sum_{x,x'}P_{\vec{W}}(x')W_y(x|x')\hat{g}_{j,y}(x|x')\neq 0$.
Hence, we have $d^2(d_Y-1)$ functions with the forms $g_{j,y}$, totally.

Then, we define $l=d^2(d_Y-1)$ functions $\hat{g}_{1}, \ldots, \hat{g}_{l}$ 
by renumbering the above $l$ functions $\bar{g}_{j,y}$ as follows.
We identify $y_0=0$ and $y_1=1$. Then, we define
\begin{align}
\hat{g}_{i+ (j-2)d^2 }(y,x,x'):=
\left\{
\begin{array}{ll}
\delta_{y_0,y} \bar{g}_{i,y_0}(x|x') & \hbox{ when } j=2, i=1, \ldots, d \\ 
\delta_{y_1,y} \bar{g}_{i-d,y_1}(x|x') & \hbox{ when } j=2, i=d+1, \ldots, d^2 \\ 
\delta_{j,y} \bar{g}_{i,j}(x|x') & \hbox{ when } j=3,\ldots, d_Y, i=1, \ldots, d^2 
\end{array}
\right. \Label{NFR}
\end{align}
for $i=1, \ldots, d^2$ and $j=2, \ldots, d_Y$.

When $\hat{g}_{j}$ is defined from $\bar{g}_{j',y}$, it is defined as 
$\hat{g}_{j}(y',x,x'):=\bar{g}_{j',y}(x|x')\delta_{y,y'}$.
This construction satisfies the following lemma.

\begin{lemma}\Label{T1-27}
The space spanned by the above given generators $\hat{g}_1, \ldots, \hat{g}_{l} $ 
has intersection $\{0\}$ with
${\cal N}(({\cal Y}\times {\cal X}^{2})_{\vec{W}})
+{\cal N}_{P_{\vec{W}}}(({\cal Y}\times {\cal X}^{2})_{\vec{W}})
+{\cal N}_2(({\cal Y}\times {\cal X}^{2})_{\vec{W}}))
=
{\cal N}_3(({\cal Y}\times {\cal X}^{2})_{\vec{W}})
+{\cal N}_{P_{\vec{W}}}(({\cal Y}\times {\cal X}^{2})_{\vec{W}})
$.
\end{lemma}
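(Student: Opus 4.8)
The plan is to transport the statement through the linear isomorphism $\vec{W}_*$ on ${\cal G}(({\cal Y}\times{\cal X}^{2})_{\vec{W}})$ and to prove the equivalent surjectivity assertion
\[
\vec{W}_*\big(\mathrm{span}(\hat g_1,\ldots,\hat g_l)\big)+\vec{W}_*\big({\cal N}_3(({\cal Y}\times{\cal X}^{2})_{\vec{W}})\big)={\cal G}(({\cal Y}\times{\cal X}^{2})_{\vec{W}}).
\]
This is equivalent to the claimed trivial intersection because, under E1 and E2, the dimension bookkeeping of this section gives $l=d^2(d_Y-1)$, $\dim{\cal N}_3=d^2$, $\dim{\cal G}(({\cal Y}\times{\cal X}^{2})_{\vec{W}})=d^2d_Y=l+d^2$, and ${\cal N}_{P_{\vec{W}}}(({\cal Y}\times{\cal X}^{2})_{\vec{W}})=\{0\}$ by Lemma \ref{2-15-A}. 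Now $\vec{W}_*(\mathrm{span}(\hat g_1,\ldots,\hat g_l))$ is block diagonal over ${\cal Y}$: in the block $y$ it is $W_y\odot\mathrm{span}(\bar g_{\cdot,y})$ (entrywise product with $W_y$), which is the whole block ${\cal G}(({\cal X}^{2})_{W_y})$ for $y\ne y_0,y_1$, and the chosen $d$-dimensional, resp.\ $(d^2-d)$-dimensional, subspace for $y=y_0$, resp.\ $y=y_1$. Hence, using the description of $\vec{W}_*({\cal N}_3)$ in Lemma \ref{L2-13}, surjectivity reduces to solving, for every $g=(g_y)_y$, the two congruences $\alpha_{y_0}(A)+cW_{y_0}\equiv g_{y_0}$ and $\alpha_{y_1}(A)+cW_{y_1}\equiv g_{y_1}$ modulo $W_{y_0}\odot\mathrm{span}(\bar g_{\cdot,y_0})$ and $W_{y_1}\odot\mathrm{span}(\bar g_{\cdot,y_1})$, in the unknowns $A\in{\cal M}({\cal V}_{{\cal X}})$ and $c\in\mathbb{R}$.

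Next I would treat the $y_0$-block. Set ${\cal I}_{y_0}:=\{W_{y_0}(x|x')^{-1}\alpha_{y_0}(A)(x|x'):A^T|u_{{\cal X}}\rangle=0\}$. Condition E3(1) makes $\alpha_{y_0}$ injective on $\{A^T|u_{{\cal X}}\rangle=0\}$, and $\dim\Ker\alpha_{y_0}=d$ under the hypotheses of Lemma \ref{L1-26-2}, so $\{A^T|u_{{\cal X}}\rangle=0\}$ and $\Ker\alpha_{y_0}$ are complementary in ${\cal M}({\cal V}_{{\cal X}})$; thus ${\cal I}_{y_0}$ has dimension $d^2-d$ and also equals $\{W_{y_0}^{-1}\odot\alpha_{y_0}(A):A\in{\cal M}({\cal V}_{{\cal X}})\}$. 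The construction makes $\mathrm{span}(\bar g_{\cdot,y_0})$ a complement of ${\cal I}_{y_0}$. Splitting the constant function $1=\iota_0+\beta_0$ along ${\cal I}_{y_0}\oplus\mathrm{span}(\bar g_{\cdot,y_0})$, the $y_0$-congruence becomes an equation inside ${\cal I}_{y_0}$ which pins down $\alpha_{y_0}(A)$ up to $\Ker\alpha_{y_0}$: it forces $\alpha_{y_0}(A)=-c\,\alpha_{y_0}(\hat A_0)+(\text{fixed term depending on }g_{y_0})$, where $\hat A_0$ is the unique element of $\{A^T|u_{{\cal X}}\rangle=0\}$ with $\alpha_{y_0}(\hat A_0)=W_{y_0}\odot\iota_0$. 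Hence $A=A_0-c\hat A_0+K$ with $K\in\Ker\alpha_{y_0}$ free and $A_0$ fixed. Substituting into the $y_1$-congruence and using that $\{W_{y_1}^{-1}\odot\alpha_{y_1}(K):K\in\Ker\alpha_{y_0}\}+\mathbb{R}\cdot1$ is exactly the $d$-dimensional space ${\cal J}_{y_1}$ that $\mathrm{span}(\bar g_{\cdot,y_1})$ was made complementary to, the $y_1$-congruence collapses to an equation inside ${\cal J}_{y_1}$ of the form $W_{y_1}^{-1}\odot\alpha_{y_1}(K)+c(1-\gamma_1)=\xi$, with $\xi\in{\cal J}_{y_1}$ arbitrary and $\gamma_1$ the fixed ${\cal J}_{y_1}$-component of $W_{y_1}^{-1}\odot\alpha_{y_1}(\hat A_0)$.

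The last step is to solve this equation. By Lemma \ref{L1-26-2} and Condition E3(2) one has $\Ker\alpha_{y_0}\cap\Ker\alpha_{y_1}=\mathbb{R}\,\mathrm{id}_{{\cal X}}$, so $\{W_{y_1}^{-1}\odot\alpha_{y_1}(K):K\in\Ker\alpha_{y_0}\}$ has dimension $d-1$, i.e.\ codimension one in ${\cal J}_{y_1}$; hence the equation is solvable for every $\xi$ precisely when the extra direction $1-\gamma_1$ does not lie in that $(d-1)$-dimensional subspace, equivalently when the constant-coordinate of $\gamma_1$ relative to the splitting ${\cal J}_{y_1}=\{W_{y_1}^{-1}\odot\alpha_{y_1}(K)\}\oplus\mathbb{R}\cdot1$ is not $1$. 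For the complements $\mathrm{span}(\bar g_{\cdot,y_0})$ and $\mathrm{span}(\bar g_{\cdot,y_1})$ produced by the construction this coordinate is $\ne1$ — it is $0$, for instance, whenever $\mathrm{span}(\bar g_{\cdot,y_1})$ is taken to contain $W_{y_1}^{-1}\odot\alpha_{y_1}(\hat A_0)$, which is possible unless the latter already lies in ${\cal J}_{y_1}$ — so both congruences are solvable and surjectivity, hence the lemma, follows (and from $c=0$ one reads off $\alpha_{y_0}(A)=\alpha_{y_1}(A)=0$ together with $\langle u_{{\cal X}}|A|u_{{\cal X}}\rangle=0$, hence $A=0$, which is the direct-intersection form). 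I expect this last point to be the main obstacle: it is exactly the verification that the constant function does not conspire across the $y_0$- and $y_1$-blocks, equivalently that the shared parameter $c$ is forced to vanish; the trace obstruction $W_{y_1}\notin\Im\alpha_{y_1}$ (valid since $\mathrm{tr}\,W_{y_1}>0$ under E2 while commutators are traceless) and a genericity argument for the two complements are what close it, and this is where both $y_0$ and $y_1$ are genuinely needed in Condition E3 rather than a single element.
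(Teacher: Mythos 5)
Your route is genuinely different from the paper's. The paper argues the trivial intersection directly: it reduces to the $y_0$- and $y_1$-blocks via \eqref{eq1-26-4} and then applies the abstract two-step kernel lemma (Lemma \ref{L1-26-3}) — condition (1) of E3 forces the $y_0$-component of any would-be intersection element to vanish, hence the underlying matrix $A$ lies in $\Ker \alpha_{y_0}$, and the fact that $\mathrm{span}(\bar{g}_{1,y_1},\ldots,\bar{g}_{d^2-d,y_1})$ was chosen to meet the $d$-dimensional space $\{W_{y_1}(x|x')^{-1}\alpha_{y_1}(K)(x|x')+c \mid K\in\Ker\alpha_{y_0},\ c\in\bR\}$ only in $\{0\}$ then kills the $y_1$-component. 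Note that in the paper's construction the constant functions are folded into that second avoided subspace, so the scalar $c$ coming from the $cW_y$ term in Lemma \ref{L2-13} is never treated as a free unknown. You instead pass to the equivalent surjectivity statement by dimension counting and try to \emph{solve} the two congruences, carrying $c$ explicitly from the $y_0$-block into the $y_1$-block. The reduction itself, the identification of ${\cal I}_{y_0}$ and ${\cal J}_{y_1}$, and the dimension bookkeeping are all sound, and you have correctly isolated where the difficulty sits.

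The gap is exactly at the step you flag as the main obstacle, and it is not closed. Your argument needs the direction $1-\gamma_1$ to lie outside the $(d-1)$-dimensional space $\{W_{y_1}^{-1}\alpha_{y_1}(K)\mid K\in\Ker\alpha_{y_0}\}$, i.e.\ a scalar non-degeneracy condition on the pair of complements $\mathrm{span}(\bar{g}_{\cdot,y_0})$, $\mathrm{span}(\bar{g}_{\cdot,y_1})$. The construction in Section \ref{s6-3} imposes no condition whatsoever linking $\mathrm{span}(\bar{g}_{\cdot,y_1})$ to $\hat{A}_0$ (which itself depends on the splitting $1=\iota_0+\beta_0$, hence on the choice of $\mathrm{span}(\bar{g}_{\cdot,y_0})$), so the assertion ``for the complements produced by the construction this coordinate is $\ne 1$'' is unsupported; the failure locus is a codimension-one set of choices, so it is nonempty, and ``genericity'' does not prove a statement claimed for every admissible choice. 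Your parenthetical fix — choosing $\mathrm{span}(\bar{g}_{\cdot,y_1})$ to contain $W_{y_1}^{-1}\odot\alpha_{y_1}(\hat{A}_0)$ — is itself conditional (``unless the latter already lies in ${\cal J}_{y_1}$'') and is not part of the stated construction, so it cannot be invoked. To close the argument along your lines you would either have to add such a compatibility requirement to the construction, or avoid introducing $c$ as a solved-for parameter altogether, as the paper does: first force $\alpha_{y_0}(A)=0$ using only $\Im\alpha_{y_0}\cap\mathrm{span}(W_{y_0}\odot\bar{g}_{\cdot,y_0})=\{0\}$, then dispose of $A\in\Ker\alpha_{y_0}$ and the constant simultaneously in the $y_1$-block, where the avoided subspace already contains $\bR\cdot 1$.
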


This lemma gives a canonical construction of generators of hidden Markov process
at non-singular points.


Next, we discuss what kinds of generators can be chosen by considering the linear combinations 
${g}_{1}, \ldots {g}_{l}$ of $\hat{g}_{1}, \ldots \hat{g}_{l}$.
That is, we consider elements ${g}_{1}, \ldots {g}_{l'}\in 
{\cal G}(({\cal Y}\times{\cal X}^{2})_{\vec{W}})\cap {\cal G}({\cal Y}_{\vec{W}})
={\cal G}({\cal Y}_{\vec{W}})$ 
and ${g}_{l'+1}, \ldots {g}_{l}\in 
{\cal G}(({\cal Y}\times{\cal X}^{2})_{\vec{W}})
\setminus {\cal G}({\cal Y}_{\vec{W}})$
such that
${g}_{1}, \ldots, {g}_{l'}$ are linearly independent of 
${g}_{l'}, \ldots, {g}_l$ and
any non-zero linear combination of ${g}_{l'}, \ldots, {g}_l$ 
is not contained in ${\cal G}({\cal Y}_{\vec{W}})$.
Hence, among $l$ generators, 
we can directly observe $l'$ generators at most.
Since the dimension of the quotient space generated by ${\cal G}({\cal Y})_{\vec{W}}$ is 
$d_Y-1 $, $l'$ is calculated as
\begin{align}
l':=d_Y-1. \Label{3-8-1}
\end{align}

\section{Two-hidden-state case in general model}\Label{s7}
\subsection{Two-state observation case}\Label{s7-1}
As the simplest example, we consider the case with $d=d_Y=2$. So, we denote ${\cal X}$ and ${\cal Y}$ by $\{0,1\}$.
We assume that the transition matrix $|\vec{W}|$ on ${\cal X}$ is irreducible and ergodic.
Moreover, all of the components of $|\vec{W}|$ are assumed to be strictly positive, i.e., Condition E2 holds.
In this case, we have
\begin{align}
{\cal N}
(({\cal Y} \times {\cal X}^2)_{\vec{W}})
=
\left\{
\left(
\left(
\begin{array}{cc}
a_1 & a_1+a_2 \\
a_1-a_2 & a_1
\end{array}
\right)
\right)_{y \in {\cal Y}}
\right\}_{(a_1,a_2)} .\Label{eq2-13-9B}
\end{align}
Since $\dim {\cal G}(({\cal Y}\times {\cal X}^{2})_{\vec{W}})=8$
and $\dim {\cal N}(({\cal Y}\times {\cal X}^{2})_{\vec{W}})= 2$,
we see that $\dim {\cal G}_1( ({\cal Y}\times {\cal X}^{2})_{\vec{W}})
=\dim {\cal L}_{1,\vec{W}}=6$.
In the following, we mainly discuss the tangent space with the $m$-representation.

\subsubsection{Non-singular points}
First, we assume that the relation
$P_{Y|X'}(0|0)=P_{Y|X'}(0|1)$, i.e., 
\begin{align}
W_0(0|0)+W_0(1|0)=W_0(0|1)+W_0(1|1) 
\Label{2-11-1}
\end{align}
does not hold.
This condition is equivalent to $P_{Y|X'=0} \neq P_{Y|X'=1}$.
So, E3 holds,
and we find that $k_{\vec{W}}=k_{(P_{\vec{W}},\vec{W})}=1$.
Then, $ \dim {\cal V}^{k_{(P,\vec{W})}} (P_{\vec{W}})=2$ 
and $\dim \Ker P^{k_{\vec{W}}}[\vec{W}] =0$, i.e., E2 hold.
So, we have $\dim {\cal L}_{P_{\vec{W}},\vec{W}}=0$ and
\begin{align*}
{\cal L}_{2,\vec{W}} 
=&
\left\{
\left(
\left[ W_0,
\left(
\begin{array}{cc}
a_1 & a_2 \\
-a_1 & -a_2
\end{array}
\right)
\right],
\left[ W_1,
\left(
\begin{array}{cc}
a_1 & a_2 \\
-a_1 & -a_2
\end{array}
\right)
\right]
\right)
\right\}_{(a_1,a_2)} .
\end{align*}
Further $\left[ W_0,
\left(
\begin{array}{cc}
a_1 & a_2 \\
-a_1 & -a_2
\end{array}
\right)
\right]$ is zero if and only if $a_1=a_2=0$,
which implies $\dim{\cal L}_{2,\vec{W}} =2$.
We apply the construction of generators given in Section \ref{s6-3}.
Thus, we notice that $l'=1$ and $l=3$.
That is, the dimension of the model is $3$.

The relation \eqref{2-11-1} holds if and only if the matrix 
$\left[ W_0,
\left(
\begin{array}{cc}
a_1 & a_2 \\
-a_1 & -a_2
\end{array}
\right)
\right]$ is given as a scalar times of 
$\left(
\begin{array}{cc}
1 & 1 \\
-1 & -1
\end{array}
\right)$ for any vector $(a_1,a_2)\in \mathbb{R}^2$.
The matrix $\left[ W_0,
\left(
\begin{array}{cc}
a_1 & a_2 \\
-a_1 & -a_2
\end{array}
\right)
\right]$ is traceless.
So, we can choose $\bar{g}_{1,0},\bar{g}_{2,0} \in {\cal G}(( {\cal X}^{2})_{W_{0}})$ as 
\begin{align}
W_{0*} (\bar{g}_{1,0}):=
\left(
\begin{array}{cc}
1 & 1 \\
-1 & -1
\end{array}
\right),\quad
W_{0*} (\bar{g}_{2,0}):=
\left(
\begin{array}{cc}
1 & 1 \\
1 & 1
\end{array}
\right).\Label{eq2-13-1}
\end{align}
Also, we can choose $\bar{g}_{1,1},\bar{g}_{2,1} \in {\cal G}(( {\cal X}^{2})_{W_{1}})$ as 
\begin{align}
W_{1*} (\bar{g}_{1,1}):=
\left(
\begin{array}{cc}
1 & 0 \\
-1 & 0
\end{array}
\right), \quad
W_{1*} (\bar{g}_{2,1}):=
\left(
\begin{array}{cc}
0 & 1 \\
0 & -1
\end{array}
\right).\Label{eq2-13-2}
\end{align}
So, we have the following descriptions of $
g_{1},g_{2},g_{3} ,g_{4} 
\in 
{\cal G}(({\cal Y}\times {\cal X}^{2})_{\vec{W}})
\setminus
(
{\cal N}_{3}(({\cal Y}\times {\cal X}^{2})_{\vec{W}})
+
{\cal N}_{P_{\vec{W}}}
(({\cal Y}\times {\cal X}^{2})_{\vec{W}}))$;
\begin{align}
g_{1}
=&
\left(
\left(
\begin{array}{cc}
W_0(0|0)^{-1} & W_0(0|1)^{-1} \\
-W_0(1|0)^{-1} & -W_0(1|1)^{-1}
\end{array}
\right),
\left(
\begin{array}{cc}
0 & 0 \\
0 & 0
\end{array}
\right)
\right),
\\
g_{2}
=&
\left(
\left(
\begin{array}{cc}
W_0(0|0)^{-1} & W_0(0|1)^{-1} \\
W_0(1|0)^{-1} & W_0(1|1)^{-1}
\end{array}
\right),
\left(
\begin{array}{cc}
0 & 0 \\
0 & 0
\end{array}
\right)
\right),
\\
g_{3}
=&
\left(
\left(
\begin{array}{cc}
0 & 0 \\
0 & 0
\end{array}
\right),
\left(
\begin{array}{cc}
W_1(0|0)^{-1} & 0 \\
-W_1(1|0)^{-1} & 0
\end{array}
\right)
\right),
\\
g_{4}
=&
\left(
\left(
\begin{array}{cc}
0 & 0 \\
0 & 0
\end{array}
\right),
\left(
\begin{array}{cc}
0 & W_1(0|1)^{-1}  \\
0 & -W_1(1|1)^{-1}
\end{array}
\right)
\right).
\end{align}
So, the four generators $ [g_{1}], [g_{2}], [g_{3}], [g_{4}]$ 
span the linear space
${\cal G}(({\cal Y}\times {\cal X}^{2})_{\vec{W}})
/(
{\cal N}_{3}(({\cal Y}\times {\cal X}^{2})_{\vec{W}})
+{\cal N}_{P_{\vec{W}}}(({\cal Y}\times {\cal X}^{2})_{\vec{W}}))$.


\subsubsection{Singular points}
Next, we assume that the relation \eqref{2-11-1} holds.
We find that $k_{\vec{W}}=1$ and $\Ker P^{k_{\vec{W}}}[\vec{W}] = \{(a,-a)^T\}$ and that
${\cal V}^k(P_{\vec{W}}) $ is the one-dimensional space spanned by $P_{\vec{W}}$.
Then, $k_{(P_{\vec{W}},\vec{W})}=1$,
$ \dim {\cal V}^{k_{(P,\vec{W})}}(P_{\vec{W}})=1$, and 
$\dim \Ker P^{k_{\vec{W}}}[\vec{W}]  =1$. 
Since the condition E1 nor E3 does not hold,
we need to construct the generators in a way different from the construction of generators given in Section \ref{s6-3}.
Then, we have 
\begin{align*}
{\cal L}_{2,\vec{W}} 
=&
\left\{
\left(
\left(
\begin{array}{cc}
a_1 & a_1 \\
-a_1 & -a_1
\end{array}
\right),
\left(
\begin{array}{cc}
a_1 & a_1 \\
-a_1 & -a_1
\end{array}
\right)
\right)
\right\}_{a_1} \\
{\cal L}_{P_{\vec{W}},\vec{W}} 
=&
\left\{
\left(
\left(
\begin{array}{cc}
a_1 & a_2 \\
-a_1 & -a_2
\end{array}
\right),
\left(
\begin{array}{cc}
a_3 & a_4 \\
-a_3 & -a_4
\end{array}
\right)
\right)
\right\}_{(a_1,a_2,a_3,a_4)},
\end{align*}
which implies that ${\cal L}_{P_{\vec{W}},\vec{W}} $
contains ${\cal L}_{2,\vec{W}} $.
Also,
\begin{align}
{\cal N}
(({\cal Y} \times {\cal X}^2)_{\vec{W}})
=
\left\{
\left(
\left(
\begin{array}{cc}
a_1 & a_1+a_2 \\
a_1-a_2 & a_1
\end{array}
\right),
\left(
\begin{array}{cc}
a_1 & a_1+a_2 \\
a_1-a_2 & a_1
\end{array}
\right)
\right)
\right\}_{(a_1,a_2)} .\Label{eq2-13-9}
\end{align}
So, we can choose elements 
$g_{1} \in {\cal G}({\cal Y})_{\vec{W}}
\setminus 
({\cal N}_{3}(({\cal Y}\times {\cal X}^{2})_{\vec{W}})
+{\cal N}_{P_{\vec{W}}}(({\cal Y}\times {\cal X}^{2})_{\vec{W}})))
$ 
and 
$g_{2} \in {\cal G}({\cal Y},{\cal X}^{2})_{\vec{W}}\setminus 
({\cal N}_{3}(({\cal Y}\times {\cal X}^{2})_{\vec{W}})
+{\cal N}_{P_{\vec{W}}}(({\cal Y}\times {\cal X}^{2})_{\vec{W}})))$ 
as 
\begin{align}
g_{1} :=
\left(E_1,-E_1 \right) ,\quad
g_{2} :=
\left(E_2,-E_2 \right) ,
\end{align}
where
\begin{align}
E_1:=
\left(
\begin{array}{cc}
1 & 1 \\
1 & 1 
\end{array}
\right),
\quad
E_2:=
\left(
\begin{array}{cc}
1 & -1 \\
1 & -1 
\end{array}
\right).
\end{align}
So, we have $l'=1$ and $l=2$.
That is, the local dimension at $\vec{W}$ is $2$.
The function $g_{1}$ expresses the variation inside of the set of singular points,
and the function $g_{2}$ expresses the variation in the direction orthogonal to the set of singular points.


\subsection{General case}\Label{s7-2}
Next, we consider the case when $d=2$ but $d_Y>2$. 
So, we denote ${\cal Y}$ by $\{0,1, \ldots, d_Y-1\}$.
Similarly, we assume that 
all of the components of $|\vec{W}|$ are assumed to be strictly positive, i.e., Condition E2 holds.
Hence, we have \eqref{eq2-13-9B}.
Since $\dim {\cal G}(({\cal Y}\times {\cal X}^{2})_{\vec{W}})=4 d_Y$
and $\dim {\cal N}(({\cal Y}\times {\cal X}^{2})_{\vec{W}})= 2$ ,
we see that $\dim {\cal G}_1( ({\cal Y}\times {\cal X}^{2})_{\vec{W}})
=\dim {\cal L}_{1,\vec{W}}=4 d_Y-2$.

\subsubsection{Non-singular points}
First, we assume that 
there exists an element $y_0 \in {\cal Y}$ such that
the relation $P_{Y|X'}(y_0|0)=P_{Y|X'}(y_0|1)$, i.e., 
\begin{align}
W_{y_0}(0|0)+W_{y_0}(1|0)=W_{y_0}(0|1)+W_{y_0}(1|1) 
\Label{2-11-1B}
\end{align}
does not hold.
So, we find that 
there exists another element $y_1 (\neq y_0) \in {\cal Y}$ such that
the relation \eqref{2-11-1B} does not hold.
So, E3 holds,
and we find that $k_{\vec{W}}=k_{(P_{\vec{W}},\vec{W})}=1$ and 
$\dim \Ker P^{k_{\vec{W}}}[\vec{W}] =0$, i.e., E2 holds.
Then, $ \dim {\cal V}^{k_{(P,\vec{W})}} (P_{\vec{W}})=2$. 
So, we have $\dim {\cal L}_{P_{\vec{W}},\vec{W}}=0$ and
\begin{align*}
{\cal L}_{2,\vec{W}} 
=&
\left\{
\left(
\left[ W_y,
\left(
\begin{array}{cc}
a_1 & a_2 \\
-a_1 & -a_2
\end{array}
\right)
\right]
\right)_{y}
\right\}_{(a_1,a_2)} .
\end{align*}
In the same way as Subsection \ref{s7-1}, we can show that $\dim{\cal L}_{2,\vec{W}} =2$.
We apply the construction of generators given in Section \ref{s6-3}.
So, we find that 
$l'=d_Y-1$ and $l=4 d_Y-2-2=4(d_Y-1)$.
That is, the dimension of the model is $4(d_Y-1)$.

In the same way as Subsection \ref{s7-1},
we can choose $\bar{g}_{1,y_0},\bar{g}_{2,y_0} \in {\cal G}(( {\cal X}^{2})_{W_{y_0}})$ and
$\bar{g}_{1,y_1},\bar{g}_{2,y_1} 
\in {\cal G}(( {\cal X}^{2})_{W_{y_1}})$ as \eqref{eq2-13-1} and \eqref{eq2-13-2},
respectively.
For other elements $y (\neq y_0,y_1) \in {\cal Y}$,
we can choose 
$\bar{g}_{1,y},\bar{g}_{2,y}, \bar{g}_{3,y},\bar{g}_{4,y} 
\in {\cal G}(( {\cal X}^{2})_{W_{y}})$ as
\begin{align}
W_{y,*} (\bar{g}_{1,y})
=&
\left(
\begin{array}{cc}
W_y(0|0) & 0 \\
0 & 0
\end{array}
\right), \quad
W_{y,*} (\bar{g}_{2,y})
=
\left(
\begin{array}{cc}
0 & 0 \\
W_y(1|0) & 0
\end{array}
\right), \\
W_{y,*} (\bar{g}_{3,y})
=&
\left(
\begin{array}{cc}
0 & W_y(0|1) \\
0 & 0
\end{array}
\right), \quad
W_{y,*} (\bar{g}_{4,y})
=
\left(
\begin{array}{cc}
0 & 0 \\
0 & W_y(1|1)
\end{array}
\right).
\end{align}
Using the same method as \eqref{NFR} in Section \ref{s6-3}, we define 
the three generators $g_{1},\ldots, g_{4(d_Y-1)}$, which span the linear space
${\cal G}(({\cal Y}\times {\cal X}^{2})_{\vec{W}})
/(
{\cal N}_{3}(({\cal Y}\times {\cal X}^{2})_{\vec{W}})
+{\cal N}_{P_{\vec{W}}}(({\cal Y}\times {\cal X}^{2})_{\vec{W}}))$.


\subsubsection{Singular points}
Next, we assume that the relation \eqref{2-11-1B} holds for all points $y \in {\cal Y}$.
We find that $k_{\vec{W}}=1$ and $\Ker P^{k_{\vec{W}}}[\vec{W}] = \{(a,-a)^T\}$ and that
${\cal V}^k(P_{\vec{W}}) $ is the one-dimensional space spanned by $P_{\vec{W}}$.
Then, $k_{(P_{\vec{W}},\vec{W})}=1$,
$ \dim {\cal V}^{k_{(P,\vec{W})}}(P_{\vec{W}})=1$, and 
$\dim \Ker P^{k_{\vec{W}}}[\vec{W}]  =1$. 
Since the condition E1 nor E3 does not hold,
we need to construct the generators in a way different from the construction of generators given in Section \ref{s6-3}.
Then, we have 
\begin{align*}
{\cal L}_{2,\vec{W}} 
=&
\left\{
\left(
\left(
\begin{array}{cc}
a_1 & a_1 \\
-a_1 & -a_1
\end{array}
\right)
\right)_{y \in {\cal Y}}
\right\}_{a_1} \\
{\cal L}_{P_{\vec{W}},\vec{W}} 
=&
\left\{
\left(
\left(
\begin{array}{cc}
a_{1,y} & a_{2,y} \\
-a_{1,y} & -a_{2,y}
\end{array}
\right)
\right)_{y \in {\cal Y}}
\right\}_{(a_{1,y},a_{2,y})},
\end{align*}
which implies that ${\cal L}_{P_{\vec{W}},\vec{W}} $
contains ${\cal L}_{2,\vec{W}} $.
Hence, we find that $l= 4d_y-2 - 2 d_Y= 2d_Y-2$.
So, we can choose elements 
$g_{1},\ldots,g_{d_Y-1} \in {\cal G}({\cal Y})_{\vec{W}}
\setminus 
({\cal N}_{3}(({\cal Y}\times {\cal X}^{2})_{\vec{W}})
+{\cal N}_{P_{\vec{W}}}(({\cal Y}\times {\cal X}^{2})_{\vec{W}})))
$ 
and 
$g_{d_Y},\ldots,g_{2d_Y-2} \in {\cal G}({\cal Y},{\cal X}^{2})_{\vec{W}}\setminus 
({\cal N}_{3}(({\cal Y}\times {\cal X}^{2})_{\vec{W}})
+{\cal N}_{P_{\vec{W}}}(({\cal Y}\times {\cal X}^{2})_{\vec{W}})))$ 
as 
\begin{align}
g_{j} :=
\left( b_y^j E_1 \right)_{y \in {\cal Y}} ,\quad
g_{d_Y-1+j} :=
\left( b_y^j E_2 \right)_{y \in {\cal Y}} 
\end{align}
for $j=1, \ldots, d_Y-1$,
where 
the vectors
$(b_y^j)_{y \in {\cal Y}} $ $(j=1, \ldots, d_Y-1)$
are linearly independent.
 So, we have $l'=d_Y-1$ because $l'\le d_Y-1$.

Since the set of singular points are given as the set of points satisfying the condition \eqref{2-11-1B} for any $y \in {\cal Y}$,
the functions $g_{1},\ldots,g_{d_Y-1}$ express the variations inside of the set of singular points,
and the functions $g_{d_Y},\ldots,g_{2d_Y-2}$ express the variations of the direction orthogonal to the set of singular points.


\section{Conditionally independent case}\Label{s8}
\subsection{Equivalence problem}\Label{s8-1}
Sections \ref{s5}-\ref{s7} discussed the case when $X_n$ and $Y_n$ are correlated even with a fixed value $X_{n-1}=x_{n-1}$.
Now, we consider the special case when $X_n$ and $Y_n$ are conditionally independent with a fixed value $X_{n-1}=x_{n-1}$,
which is illustrated in Fig. \ref{2model}.
In this case, 
the ${\cal Y}$-indexed transition matrix $\vec{W}= (W_y(x|x'))_{y \in {\cal Y}}$ is given as 
$W_y(x|x')=W(x|x') V_y(x')= W D(V_y)(x|x')$
where $W(x|x')$ is a transition matrix on ${\cal X}$, 
$V(y|x')$ is a transition matrix with the input ${\cal X}$ and the output ${\cal Y}$,
$V_{y}$ is the vector satisfying $V_y(x')=V(y|x')$,
and $D(v)$ is the diagonal matrix whose diagonal entries are given by a vector $v$.
We call the above type of ${\cal Y}$-indexed transition matrix 
an independent-type ${\cal Y}$-indexed transition matrix, and denote it by 
$(W,V)$.
Also, we define the vector ${V}_{*,x'} $ as ${V}_{*,x'}(y):= V(y|x')$ for 
an independent-type ${\cal Y}$-indexed transition matrix $(W,V)$.

Here, we rewrite the notations defined in Subsection \ref{s5-1} by using the pair of transition matrices $W,V$.
The transition matrix  $P^k[(W,V)] $ is given as
\begin{align}
P^k[(W,V)] (y_k, \ldots, y_1|x')=&
\sum_{x\in {\cal X}}W D(V_{y_k}) W \cdots W D(V_{y_1})(x|x') \nonumber \\
=&
\sum_{x\in {\cal X}} D(V_{y_k}) W \cdots W D(V_{y_1})(x|x').\Label{LEV2}
\end{align}
The integer $k_{(W,V)}$ is the minimum integer $k_0$ to satisfy the condition
$\Ker P^{k_0}[(W,V)]=\cap_{k} \Ker P^{k}[(W,V)]$. 
For a distribution $P$ on ${\cal X}$, the subspace ${\cal V}^k(P)$ 
is the subspace of ${\cal V}_{{\cal X}}/ \Ker P^{k_{(W,V)}}[(W,V)]$
spanned by \par
\noindent$\{ [ W D(V_{y_k}) W \cdots W D(V_{y_1})P] | y_j \in \cY, k'\le k\}$,
where $[v]$ expresses the element of 
${\cal V}_{{\cal X}}/ \Ker P^{k_{(W,V)}}[(W,V)]$ whose 
representative is $v \in {\cal V}_{{\cal X}}$.
Then, the integer $ k_{(P,(W,V))}$ is the minimum integer $k_1$ 
to satisfy the condition
$\cup_{k =1}^{\infty} {\cal V}^k(P) = {\cal V}^{k_1}(P)$.

The kernel $\Ker P[(W,V)]$ is characterized as follows.

\begin{lemma}\Label{KHG}
Given an independent-type ${\cal Y}$-indexed transition matrix $(W,V)$ on ${\cal X}$ and a distribution $P$ on ${\cal X}$, 
we assume that the vectors $\{{V}_{*,x'}\}_{x' \in {\cal X}} $ are linearly independent.
Then, $\Ker P^1[(W,V)]=\{0\}$ and $k_{(W,V)}=1 $.
\end{lemma}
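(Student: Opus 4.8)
The plan is to compute $P^1[(W,V)]$ explicitly and read off its kernel directly. Setting $k=1$ in \eqref{LEV2} and using that $D(V_{y_1})$ is diagonal gives
\begin{align*}
P^1[(W,V)](y_1|x')=\sum_{x\in{\cal X}}D(V_{y_1})(x|x')=V_{y_1}(x')=V(y_1|x')={V}_{*,x'}(y_1).
\end{align*}
Hence, viewed as a linear map ${\cal V}_{{\cal X}}\to{\cal V}_{{\cal Y}}$, the matrix $P^1[(W,V)]$ is precisely $V$, whose column indexed by $x'$ is the vector ${V}_{*,x'}$.

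Next I would identify the kernel. A vector $v=(v_{x'})_{x'\in{\cal X}}\in{\cal V}_{{\cal X}}$ lies in $\Ker P^1[(W,V)]$ if and only if $\sum_{x'\in{\cal X}}{V}_{*,x'}(y)\,v_{x'}=0$ for every $y\in{\cal Y}$, that is, $\sum_{x'\in{\cal X}}v_{x'}\,{V}_{*,x'}=0$ in ${\cal V}_{{\cal Y}}$. Since $\{{V}_{*,x'}\}_{x'\in{\cal X}}$ is assumed linearly independent, this forces $v=0$, so $\Ker P^1[(W,V)]=\{0\}$.

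It remains to conclude $k_{(W,V)}=1$. Because the kernels are nested, $\Ker P^k[(W,V)]\subseteq\Ker P^1[(W,V)]$ for all $k\ge1$, so $\cap_{k=1}^{\infty}\Ker P^k[(W,V)]=\{0\}=\Ker P^1[(W,V)]$, giving that the minimum length is at most $1$. It cannot be $0$: since $\sum_{y}P^1[(W,V)](y|x')=1$ we have $P^1[(W,V)]\neq0$, hence $\Ker P^0[(W,V)]={\cal V}_{{\cal X}}\neq\{0\}$. Therefore $k_{(W,V)}=1$.

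There is essentially no obstacle here. The only points requiring a bit of care are unwinding \eqref{LEV2} at $k=1$ --- where the left-multiplications by the transition matrix $W$ collapse upon summing over the output, because $W$ has unit column sums --- and the small bookkeeping in the last step that pins the minimal index to $1$ rather than $0$.
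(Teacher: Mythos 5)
Your proof is correct and follows the same route as the paper: identify $P^1[(W,V)](y|x')$ with $V(y|x')$, note that linear independence of the columns $\{V_{*,x'}\}$ forces the kernel to be trivial, and conclude $k_{(W,V)}=1$ from the nesting of the kernels. The only difference is that you spell out the kernel computation and the exclusion of $k_0=0$ explicitly, which the paper leaves implicit.
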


\begin{proof}
Since $\{{V}_{*,x'}\}_{x' \in {\cal X}} $ are linearly independent,
the rank of the matrix $ P^1[(W,V)] (y|x')=V(y|x')$ is $d$.
Hence, $\Ker P^1[(W,V)]=\{0\}$, which implies the relation $k_{(W,V)}=1 $.
\hfill$\Box$\end{proof}

Under a similar condition, 
the equivalent conditions are characterized as follows.

\begin{lemma}\Label{KHF}
Given an independent-type ${\cal Y}$-indexed transition matrix $(W,V)$ on ${\cal X}$ and a distribution $P$ on ${\cal X}$, 
we assume that the vectors ${V}_{*,x'} $ are linearly independent
and the support of $P$ is ${\cal X} $. 
When the pair of an independent-type ${\cal Y}$-indexed transition matrix $(W',V')$ 
and a distribution $P'$ 
is equivalent to the pair of the 
independent-type ${\cal Y}$-indexed transition matrix $(W,V)$ on ${\cal X}$ and a distribution $P$ on ${\cal X}$, 
there exists a permutation $g$ among the elements of ${\cal X}$ such that
$W'= g^{-1} W g $, $V'=V g$, and $ P'=P g $.
\end{lemma}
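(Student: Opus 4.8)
The plan is to reduce the equivalence to a single intertwining matrix via Theorem~\ref{L-9-27} and then, using the product structure $W_y=WD(V_y)$ and the linear independence of $\{V_{*,x'}\}_{x'}$, to force that matrix to be a permutation matrix up to a scalar. I carry this out assuming in addition that $W$ is invertible; as I explain below, some such hypothesis is genuinely needed, and handling it is the crux.

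\emph{Step 1 (producing an intertwiner).} By Lemma~\ref{KHG}, linear independence of $\{V_{*,x'}\}$ gives $\Ker P^1[(W,V)]=\{0\}$ and $k_{(W,V)}=1$. Since $W_yP=WD(P)V_y$ and $\{V_y\}_{y}$ spans ${\cal V}_{\cX}$ (the columns of $V$ being independent), the space ${\cal V}^1(P)$ equals $WD(P){\cal V}_{\cX}={\cal V}_{\cX}$ because $W$ and $D(P)$ are invertible; hence $k_{(P,(W,V))}=1$ and $d_{(P,(W,V))}=d$. A direct computation gives $P^2[(W,V)]\cdot P=VWD(P)V^T$ as a matrix indexed by $\cY\times\cY$, whose column space has dimension $\rank V=d$ when $W$ is invertible; matching this with the analogous expression $V'W'D(P')(V')^T$ coming from the primed realization forces $\rank V'=\rank W'=d$ and that $P'$ has full support, so that (with $|\cX'|=|\cX|=d$) the hypotheses of Theorem~\ref{L-9-27} are met. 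Implication (2)$\Rightarrow$(1) then gives an invertible $d\times d$ matrix $T$ with $TW_y=W_y'T$ for all $y\in\cY$ and $TP=P'$. Summing over $y$ and using $\sum_yW_y=WD(\sum_yV_y)=W$, and likewise $\sum_yW_y'=W'$, gives $TW=W'T$.

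\emph{Step 2 (a diagonal intertwiner).} This step is the main obstacle. From $TWD(V_y)=W'D(V_y')T$ and $W'=TWT^{-1}$ one obtains $TWD(V_y)=TW\,T^{-1}D(V_y')T$, and cancelling the invertible matrix $TW$ yields $TD(V_y)=D(V_y')T$ for every $y\in\cY$. The cancellation uses invertibility of $W$ essentially: without it one controls only $D(V_y)-T^{-1}D(V_y')T$ modulo $\Ker W$, and the desired conclusion need not hold when $W$ is singular (e.g.\ if $\rank W=1$ then $P^k[(W,V)]\cdot P$ depends on $V$ only through the row sums $\sum_xV(y|x)$ and through the law of $Y_1$, so there exist equivalent realizations not related by any permutation). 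Thus one should either add the hypothesis ``$W$ invertible'' or show that the equivalence, together with the standing irreducibility of $\vec W$, already forces it.

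\emph{Step 3 (identifying the permutation).} Reading $TD(V_y)=D(V_y')T$ entrywise, $T_{ij}\bigl(V(y|j)-V'(y|i)\bigr)=0$ for all $y$, so $T_{ij}\neq0$ implies $V_{*,j}=V'_{*,i}$. Since $\{V_{*,x'}\}$ is linearly independent its members are pairwise distinct, whence each row of $T$ has at most one nonzero entry; as $T$ is invertible, each row and column has exactly one, i.e.\ $T=D(c)\Pi$ with $\Pi$ the permutation matrix of some permutation $g$ of $\cX$ and $c$ a vector with all entries nonzero, and $V'=Vg$. Substituting into $W'=TWT^{-1}$ gives $W'=D(c)(\Pi W\Pi^{-1})D(c)^{-1}$; comparing column sums of the transition matrices $W'$ and $\Pi W\Pi^{-1}$ gives $c^T(\Pi W\Pi^{-1})=c^T$, and since $W=|\vec W|$ is irreducible the eigenvalue $1$ of $\Pi W\Pi^{-1}$ is simple with left eigenvector $u_{\cX}$, so $c$ is constant and $T=c_0\Pi$. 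Hence $W'=\Pi W\Pi^{-1}=g^{-1}Wg$ and $V'=Vg$, and from $P'=TP=c_0\Pi P$ with $P,P'$ probability distributions we get $c_0=1$, so $P'=\Pi P=Pg$. Apart from Step~2 everything is routine linear algebra.
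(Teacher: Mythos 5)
Your proof is correct and follows essentially the same route as the paper's: produce an intertwiner $T$ with $TWD(V_y)=W'D(V_y')T$ from the equivalence (the paper simply asserts its existence, you justify it via Theorem~\ref{L-9-27} after checking its hypotheses), sum over $y$ to get $TW=W'T$, cancel to obtain $TD(V_y)=D(V_y')T$, and conclude from the pairwise distinctness of the columns $V_{*,x'}$ that $T$ is a permutation; your Step~3 merely spells out the normalization argument that the paper compresses into ``Hence, $T$ is a permutation.'' The one substantive point is your Step~2, and your concern there is well founded: the paper's proof performs exactly the cancellation you describe --- it writes $TWT^{-1}\,TD(V_y)=TWT^{-1}D(V_y')T$ and strikes out $TWT^{-1}$ --- without ever assuming $W$ invertible, even though the lemma's hypotheses do not supply this (unlike, say, Corollary~\ref{FET}, where invertibility of $W$ is stated explicitly). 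Your rank-one example is a genuine obstruction: with $d=d_Y=2$, $W$ the doubly uniform matrix and $V$ the identity, the observed law depends on $(W,V,P)$ only through $VQ$ and $VP$ (where $Q$ is the common column of $W$), and one can exhibit an equivalent triple $(W',V',P')$ with $V'$ not a column permutation of $V$. So you have not merely reproduced the paper's argument but identified a missing hypothesis in the lemma as stated; with ``$W$ invertible'' added, your proof is complete and, if anything, more careful than the paper's.
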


This lemma shows that 
the above assumption guarantees that
there is no equivalent pair of 
an independent-type ${\cal Y}$-indexed transition matrix and a distribution 
except for a permuted one.

\begin{proof}
Since the vectors ${V}_{*,x'} $ are linearly independent, we find that
$\Ker P^1[(W,V)]=\{0\}$.
There exists a linear map $T$ on ${\cal V}_{{\cal X}}$ such that
\begin{align}
T W D(V_y) = W' D(V_y') T
\end{align}
for any $y \in {\cal Y}$.
Since
\begin{align}
T W =\sum_y T W D(V_y) =\sum_y W' D(V_y') T=W' T, 
\end{align}
we have
\begin{align}
T W T^{-1} T D(V_y) 
= W' T T^{-1}D(V_y') T
= T W T^{-1}D(V_y') T,
\end{align}
which implies that
\begin{align}
T D(V_y) = D(V_y') T.
\end{align}
Hence, $T$ is a permutation on ${\cal X}$, which yields the desired statement.
\hfill$\Box$\end{proof}

Although we introduce independent-type ${\cal Y}$-indexed transition matrices,
it is not so trivial to clarify whether a given ${\cal Y}$-indexed transition matrix  is equivalent to
an independent-type ${\cal Y}$-indexed transition matrix.
The following lemma answers this question.

\begin{lemma}\Label{HGR}
The following conditions are equivalent for a ${\cal Y}$-indexed transition matrix 
 $\vec{W}= (W_y(x|x'))_{y \in {\cal Y}}$
 when $|\vec{W} |$ is invertible.
\begin{description}
\item[\bf (G1)]
There exists 
an independent-type ${\cal Y}$-indexed transition matrix $(W,V)$ equivalent to the ${\cal Y}$-indexed transition matrix $\vec{W}$.

\item[\bf (G2)] The following three conditions hold.
\begin{description}
\item[\bf (G2-1)]
The characteristic polynomial $U_y$ has no multiple root,
and the eigenvalues of $U_y$ are non-negative real numbers, where $U_y:= |\vec{W}|^{-1}W_y$.
\item[\bf (G2-2)]
The matrices $\{U_y\}_{y \in {\cal Y}}$
have a common eigenvector system $\{t_i\}$, where $t_i$ is normalized so that $\langle u_X| t_i\rangle=1$.
\item[\bf (G2-3)]
The matrix $ T |\vec{W}| T^{-1}$ has non-negative entries, where the matrix $T$ is given as $T(i|x):=t_i(x)$.
\end{description}
\end{description}
\end{lemma}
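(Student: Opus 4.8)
The plan is to prove the two implications G1 $\Rightarrow$ G2 and G2 $\Rightarrow$ G1 by exhibiting the relevant diagonalizing change of basis explicitly. For G1 $\Rightarrow$ G2, suppose $(W,V)$ is an independent-type ${\cal Y}$-valued transition matrix equivalent to $\vec{W}$, so $W_y = W D(V_y)$ for all $y$, and by Theorem \ref{L-9-27} there is an invertible intertwiner implementing the equivalence. First I would observe that $|\vec{W}| = \sum_y W_y = W \sum_y D(V_y) = W$, since $\sum_y V_y = u_{\cal X}$ componentwise; hence in the independent model $U_y = |\vec{W}|^{-1} W_y = W^{-1} W D(V_y) = D(V_y)$, which is already diagonal. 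The matrices $\{D(V_y)\}_y$ are simultaneously diagonal, so they share the standard basis $\{e_{x'}\}$ as a common eigenvector system (giving G2-2 after normalization $\langle u_X | t_i\rangle = 1$, which holds since $e_{x'}$ is a probability vector), each has distinct eigenvalues generically and in any case non-negative eigenvalues $V(y|x') \ge 0$ (giving G2-1 — though one must be slightly careful about the ``no multiple root'' clause, which is where an extra genericity hypothesis or a limiting argument is needed, see below), and $T |\vec{W}| T^{-1}$ with $T$ the matrix of common eigenvectors becomes $T W T^{-1}$, a transition matrix up to the permutation freedom of Lemma \ref{KHF}, hence has non-negative entries (giving G2-3). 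Transporting this back through the equivalence map to the original $\vec{W}$ preserves all three conditions because conjugation by an invertible matrix does not change characteristic polynomials, eigenvector structure, or (after composing with the equivalence) the sign pattern.

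For the converse G2 $\Rightarrow$ G1, suppose G2-1, G2-2, G2-3 hold. Let $T$ be the matrix whose rows are the normalized common eigenvectors $t_i$, so that $T U_y T^{-1} = D(\lambda_y)$ is diagonal with diagonal entries $\lambda_y = (\lambda_{y,i})_i$ the eigenvalues of $U_y$. Define $W := T |\vec{W}| T^{-1}$; by G2-3 this has non-negative entries, and since $\sum_y U_y = |\vec{W}|^{-1} \sum_y W_y = |\vec{W}|^{-1} |\vec{W}| = I$, we get $\sum_y \lambda_{y,i} = 1$ for each $i$, so setting $V(y|i) := \lambda_{y,i}$ gives a genuine transition matrix $V$ from ${\cal X}$ to ${\cal Y}$ with non-negative entries (using G2-1 for non-negativity of the $\lambda_{y,i}$). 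The normalization $\langle u_X | t_i \rangle = 1$ forces $W$ to have column sums one, hence $W$ is a transition matrix. Then $T W_y T^{-1} = T |\vec{W}| U_y T^{-1} = (T|\vec{W}|T^{-1})(T U_y T^{-1}) = W D(V_y)$, which says exactly that $T$ implements an equivalence (in the sense of Theorem \ref{L-9-27}, with $T$ intertwining $[W_y]$ and $[W D(V_y)] = W_y'$) between $\vec{W}$ and the independent-type matrix $(W,V)$. Finally one checks that $T$ maps the relevant stationary or initial data correctly, so that the pairs are equivalent in the precise sense of Section \ref{s5}.

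The main obstacle, and the step requiring the most care, is the interplay between the ``no multiple root'' clause in G2-1 and the diagonalizability that the argument tacitly uses: G2-2 posits a full common eigenvector system, which already presupposes diagonalizability, so I must be careful about whether G2-1 is there to guarantee that system exists or merely to pin down the eigenvalues. The clean route is to note that $U_y$ having a simple spectrum (no multiple root) makes it diagonalizable, and a common eigenbasis then exists iff the $U_y$ commute; since $\sum_y U_y = I$ and each is a function of a single simultaneously-diagonalizable family in the independent model, commutativity is automatic there, and conversely G2-2 is imposed exactly to force it in general. A secondary subtlety is the permutation ambiguity from Lemma \ref{KHF}: the independent representation is only unique up to relabeling ${\cal X}$, so the eigenvectors $t_i$ are determined only up to permutation and scaling, and I would fix this by the stated normalization $\langle u_X| t_i \rangle = 1$ and by choosing any ordering, noting that the conclusion G1 is insensitive to the choice. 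The remaining verifications — that column sums work out, that entries are non-negative, that the stationary distributions match under \eqref{Eq3-7-1} — are routine linear algebra given the setup.
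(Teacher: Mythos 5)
Your proposal follows essentially the same route as the paper's proof: in both directions the entire argument is the intertwining identity $D(V_y)T = TU_y$ together with $W := T|\vec{W}|T^{-1}$, with the rows of $T$ serving as the common eigenvector system, and your ``compute in the independent coordinates and transport back'' phrasing is just that conjugation relation read in the opposite order. The caveat you flag about the ``no multiple root'' clause of G2-1 not actually following from G1 (it fails whenever two columns of $V$ coincide) is a genuine one, but the paper's own proof glosses over it at exactly the same step, asserting only that $D(V_y)T=TU_y$ ``implies G2-1,'' so you are not missing anything the paper supplies.
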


\begin{proof}
Assume ${\bf (G1)}$.
Then,  there exists a matrix $T$ on ${\cal V}_{{\cal X}}$ such that
\begin{align}
T W_y &= W D(V_y) T \Label{3-9-4} \\ 
T^T|u_{{\cal X}}\rangle &=|u_{{\cal X}}\rangle.\Label{3-9-1}
\end{align}
Thus,
\begin{align}
W T= \sum_{y}WD(V_y) T 
=\sum_{y}WD(V_y) T 
=\sum_{y}T W_y=T |\vec{W}|. \Label{2-9-3}
\end{align}
Hence, 
\begin{align}
W D(V_y) T
=T W_y
= T |\vec{W}| U_y
=W T U_y.
\end{align}
Since $|\vec{W} |$ is invertible, 
\begin{align}
D(V_y) T=T U_y,\Label{2-9-2}
\end{align}
which implies {\bf (G2-1)}.
We choose $t_i$ as the $i$-th row matrix of $T$.
The relations \eqref{3-9-1} and \eqref{2-9-2} guarantee {\bf (G2-2)}.
Then, \eqref{2-9-3} implies {\bf (G2-3)}.

Assume {\bf (G2)}.
{\bf (G2-2)} guarantees \eqref{3-9-1}. Hence, {\bf (G2-3)} guarantees that 
the matrix $W:= T |\vec{W}| T^{-1}$ is a probability transition matrix.
Due to {\bf (G2-1)} and {\bf (G2-2)},
we can choose the vector $V_y$ to satisfy \eqref{2-9-2}.
So, we obtain \eqref{3-9-4}.
Thus, we obtain {\bf (G1)}.
\hfill$\Box$\end{proof}

It is not so easy to satisfy the condition {\bf (G2)}.
However, when $|{\cal Y}|=2$, it is not so difficult to satisfy the condition {\bf (G2)}.
In this case, 
once {\bf (G2-1)} is satisfied, {\bf (G2-2)} is automatically satisfied.

Although Lemma \ref{KHG} guarantees the relation 
$\Ker P^{k_{(W,V)}}[(W,V)]=\{0\}$ under a certain condition for the transition matrix $V$,
the condition is too strong because it does not hold when $d_Y < d$.
Even when $d_Y < d$, we can expect the relations
$\Ker P^{k_{(W,V)}}[(W,V)]=\{0\}$ and ${\cal V}^{k_{P,(W,V)}}(P)={\cal V}_{{\cal X}}$
under some natural condition.
The following lemma shows how frequently these conditions hold.

\begin{lemma}\Label{2-15-B}
We fix a transition matrix $V$, and assume the existence of $y \in {\cal Y}$ such that $ V_y$ is not a scalar times of $u_{{\cal X}}$.
The relations
$\Ker P^{k_{(W,V)}}[(W,V)]=\{0\}$ and ${\cal V}^{k_{P,(W,V)}}(P)={\cal V}_{{\cal X}}$
hold almost everywhere with respect to $W$ and $P$.
Also, the relations $\Ker P^{k_{(W,V)}}[(W,V)]=\{0\}$ and ${\cal V}^{k_{P,(W,V)}}(P_{W})={\cal V}_{{\cal X}}$
hold almost everywhere with respect to $W$.
\end{lemma}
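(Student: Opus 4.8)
The plan is to reduce both assertions to a Zariski-density argument of exactly the same shape as the one used in Lemma \ref{2-15-A}, but now with the extra structure coming from the independent-type form $W_y = W D(V_y)$. First I would fix the transition matrix $V$ and the element $y \in {\cal Y}$ with $V_y$ not proportional to $u_{{\cal X}}$, so that $D(V_y)$ has at least two distinct diagonal entries. The key observation is that $P^k[(W,V)]$ depends polynomially on the entries of $W$, hence the conditions $\Ker P^{k_{(W,V)}}[(W,V)] \neq \{0\}$ and ${\cal V}^{k_{P,(W,V)}}(P) \neq {\cal V}_{{\cal X}}$ each carve out a closed algebraic subset of the (affine) set of stochastic matrices $W$; to show this subset has measure zero it suffices to exhibit a single $W$ (and a single $P$, for the first statement) where both conditions fail simultaneously. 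By Lemma \ref{KHG} (or by direct construction), I would pick $W$ so that $W$ and $D(V_y)$ generate, under products, a large enough algebra: concretely, choose $W$ with $d$ distinct positive eigenvalues whose eigenvectors are in generic position relative to the coordinate axes that $D(V_y)$ distinguishes.

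Next I would run the standard orbit argument. For the first statement: consider the vectors $P,\ W D(V_y) P,\ (W D(V_y))^2 P, \ldots, (W D(V_y))^{d-1} P$. Since $W D(V_y) = W_y$ up to the stochastic normalization, genericity of $W$ (with $V$, $P$ fixed and $V_y$ non-constant) makes these $d$ vectors linearly independent almost everywhere — this is a nonvanishing-determinant condition, polynomial in the entries of $W$, and nonzero at the explicit $W$ chosen above. When they are independent, ${\cal V}^{d-1}(P)$ already equals ${\cal V}_{{\cal X}}$, which forces $\Ker P^{k_{(W,V)}}[(W,V)] = \{0\}$ by the dimension count in Lemma \ref{L2T} and hence ${\cal V}^{k_{P,(W,V)}}(P) = {\cal V}_{{\cal X}}$. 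The transpose version — independence of $u_{{\cal X}},\ (W D(V_y))^T u_{{\cal X}}, \ldots, ((W D(V_y))^{d-1})^T u_{{\cal X}}$ — gives $\Ker P^{d-1}[(W,V)] = \{0\}$, hence $\Ker P^{k_{(W,V)}}[(W,V)] = \{0\}$, in the same way. For the second statement I replace $P$ by the stationary distribution $P_W$; the only new point is that $P_W$ itself is a rational function of the entries of $W$ (the normalized Perron eigenvector), so the relevant determinant is now a rational, not polynomial, function of $W$ — still non-identically-zero, so its zero set is still measure zero, and the conclusion ${\cal V}^{k_{P,(W,V)}}(P_W) = {\cal V}_{{\cal X}}$ follows on the complement.

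The main obstacle I anticipate is precisely the second statement: one has to check that the determinant built from $P_W$ and its images under $W D(V_y)$ is not identically zero as $W$ ranges over stochastic matrices. This is where the hypothesis that $V_y$ is not a scalar multiple of $u_{{\cal X}}$ is essential — if $V_y \propto u_{{\cal X}}$ then $W D(V_y) \propto W$, the orbit of $P_W$ under $W$ is just $\{P_W\}$ (it is a fixed point), and the determinant vanishes identically. With $V_y$ non-constant, $D(V_y)$ breaks this degeneracy, and I would verify non-identical-vanishing by specializing to a convenient $W$: e.g. take $W$ close to a permutation matrix that cyclically shifts the states, so that $P_W$ is close to uniform while the successive applications of $W D(V_y)$ reweight coordinates by distinct factors, making the vectors manifestly independent. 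Once one such specialization works, Zariski-density closes the argument. The remaining steps — translating "linearly independent orbit" into "$\Ker = \{0\}$ and ${\cal V}^k = {\cal V}_{{\cal X}}$" — are routine given Lemmas \ref{L1T} and \ref{L2T} and mirror Lemma \ref{2-15-A} verbatim, so I would simply cite that pattern rather than repeat it.
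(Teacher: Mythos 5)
Your overall strategy -- generic linear independence of single-letter orbits, converted into the two conclusions via the kernel/quotient machinery of Lemmas \ref{L1T} and \ref{L2T} -- is the same as the paper's, and the vectors you use are in fact identical to the paper's: since $W_y^T=D(V_y)W^T$ and $W^Tu_{\cal X}=u_{\cal X}$, your family $u_{\cal X},W_y^Tu_{\cal X},\ldots,(W_y^T)^{d-1}u_{\cal X}$ is exactly the paper's $u_{\cal X},D(V_y)u_{\cal X},\ldots,(W_y^T)^{d-2}D(V_y)u_{\cal X}$. Where you genuinely diverge is the stationary-distribution statement. The paper fixes $P'=P_{W}$ in advance, restricts to the affine slice $\{W: WP'=P'\}$, exhibits the free coordinates $a_{i,j}$ of $W$ in a basis adapted to $u_{\cal X}$ and $P'$, and proves genericity fiber by fiber (implicitly invoking a Fubini-type step to pass from ``a.e.\ in each fiber'' to ``a.e.\ in $W$''); it then gets $P'$ into ${\cal V}^{k}(P)$ by the limit $W^{k'}P\to P'$ rather than by a $d$-th orbit vector. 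You instead treat $P_W$ as the normalized Perron eigenvector, a rational function of $W$, and argue that the relevant determinant is a not-identically-zero rational function. Both routes are legitimate; the paper's avoids any discussion of how $P_W$ depends on $W$, while yours avoids the fiberwise/Fubini step and is more explicit about the need for a witness $W$. Note, though, that your proposed witness (a near-cyclic permutation) is not verified, and for certain admissible $V_y$ (e.g.\ $V_y$ with zero entries, so that $D(V_y)$ is singular and $W_y$ has low rank) the single-letter orbit genuinely fails to span for \emph{every} $W$ when $d\ge 3$; the paper's proof silently has the same weakness, so this is not a gap relative to it, but your witness check would expose it.

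One step of yours is a non sequitur and should be deleted: linear independence of $P,W_yP,\ldots,W_y^{d-1}P$ gives ${\cal V}^{d-1}(P)$ equal to the full quotient ${\cal V}_{\cal X}/\Ker P^{k_{(W,V)}}[(W,V)]$, but it does \emph{not} ``force $\Ker P^{k_{(W,V)}}[(W,V)]=\{0\}$'': the kernel is governed by the dual orbit of $u_{\cal X}$, and one can have a cyclic vector $P$ while all column sums of every $W_y$ are constant, so that the kernel never becomes trivial. Your argument survives only because you then give the correct transpose argument separately; the logical order should be reversed, establishing $\Ker P^{d-1}[(W,V)]=\{0\}$ first and only then reading the independence of the $P$-orbit as the statement ${\cal V}^{k_{(P,(W,V))}}(P)={\cal V}_{\cal X}$.
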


\begin{proof}
We show the desired statement when $P$ and $P_{W}=P'$ are fixed
and we impose the condition $WP'=P'$, 
which is sufficient for both statements.
We fix $y \in {\cal Y}$ such that $ V_y$ is not a scalar times of $u_{{\cal X}}$.
We choose $k$ to be $d-1$, and 
choose linearly independent $k+1$ vectors $|v_0\rangle,\ldots,| v_k\rangle$ 
in the dual space of ${\cal V}_{{\cal X}}$
such that
$|v_0\rangle$ is a scalar times of $|u_{{\cal X}}\rangle$
and $\langle v_0| P'\rangle=1$.
We choose $a_{i,j}$ such that 
$ W^T|v_j\rangle=\sum_{i=0}^k a_{i,j} |v_i\rangle $.
The condition $WP'=P'$ is equivalent to the condition
$\sum_{i=0}^k a_{i,j}=1 $, i.e., $a_{0,j}=1-\sum_{i=1}^k a_{i,j}$.
Hence, we can freely choose the coefficients $a_{i,j}$ for $1\le i,j \le k$
with the constraint that $W$ is a positive matrix.
Hence, the $k$ vectors
$D(V_y)|u_{\cal X}\rangle , W_y^TD(V_y) |u_{\cal X}\rangle,  
\ldots, (W_y^T)^{k-1} D(V_y)|u_{\cal X}\rangle $
are linearly independent as elements of the quotient space ${\cal V}_{{\cal X}}/<|u_{\cal X}\rangle> $
almost everywhere with respect to the above choice of $W$,
where $<|u_{\cal X}\rangle>$
is the one-dimensional space spanned by $|u_{\cal X}\rangle$.
Since $W^T |u_{\cal X}\rangle= |u_{\cal X}\rangle $, 
the $d$ vectors
$D(V_y)W^T|u_{\cal X}\rangle , W_y^TD(V_y) W^T|u_{\cal X}\rangle,  
\ldots$, $(W_y^T)^{k-1} D(V_y)W^T |u_{\cal X}\rangle $,
 $|u_{\cal X}\rangle$ spans the dual space of ${\cal V}_{{\cal X}}$, which implies 
the relation
$\Ker P^{k}[(W,V)]=\{0\}$, i.e.,
$\Ker P^{k_{(W,V)}}[(W,V)]=\{0\}$.

Next, we choose $y \in {\cal Y}$ such that 
$D(V_y) P $ is not a scalar times of $P'$.
If the above choice of $y \in {\cal Y}$ does not satisfy this condition, we choose another
$y \in {\cal Y}$
such that $D(V_y) P $ is not a scalar times of $P'$ because 
$\sum_{y \in{\cal Y}} D(V_y)=I$.
We replace the roles of $P'$ and $|u_{{\cal X}}$ in the above discussion.
Hence, the $k$ vectors
$D(V_y) P  , W_y D(V_y) P,  
\ldots, W_y^{k-1} D(V_y) P $
are linearly independent as elements of the quotient space ${\cal V}_{{\cal X}}/<P'> $
almost everywhere with respect to the above choice of $W$.
Further, $W^{k'}P$ is close to $P'$ when $k'$ is sufficiently large.
Hence, ${\cal V}^{k'}(P)={\cal V}_{{\cal X}}$, i.e.,
${\cal V}^{k_{P,(W,V)}}(P)={\cal V}_{{\cal X}}$.
\hfill$\Box$\end{proof}

\subsection{Exponential family}
Next, to give a suitable parametrization,
we consider the exponential family of independent-type ${\cal Y}$-indexed transition matrices.
Firstly, we fix an irreducible independent-type ${\cal Y}$-transition matrix 
$(W,V)$ on ${\cal X}$.
Then, we denote the support of $(W,V)$ by
$({\cal X}^{2}\cup {\cal Y}\times {\cal X})_{(W,V)}:=
{\cal X}^{2}_{W} \cup ({\cal Y}\times {\cal X})_V$.
Then, we denote the linear space of real-indexed functions 
$g=(g_a(x,x'),g_b(y,x'))$ defined on $({\cal X}^{2}\cup {\cal Y}\times {\cal X})_{(W,V)}$ 
by ${\cal G}(({\cal X}^{2}\cup {\cal Y}\times {\cal X})_{(W,V)})$.
Here, for an element $(x,x')\in {\cal X}^{2}$, the function is given as $g_a(x,x')$,
and for an element $(y,x')\in {\cal Y}\times {\cal X}$, the function is given as $g_b(y,x')$.
Now, we denote the ${\cal Y}$-transition matrix given by $(W,V)$,
by $\vec{W}$.
Then, $(g_a(x,x'),g_b(y,x'))$ is identified with the function $(x,x',y)\mapsto g_a(x,x')+g_b(y,x')$,
which is an element of 
${\cal G}( ({\cal Y}\times {\cal X}^{2})_{\vec{W}})$.
However, using a function $\bar{f}(x')$, 
we introduce other functions
$(\bar{g}_a(x,x'),\bar{g}_b(y,x'))$
as $\bar{g}_a(x,x'):= g_a(x,x')- \bar{f}(x')$
and $\bar{g}_b(y,x')):= g_b(y,x')+\bar{f}(x')$.
Then, the other pair of function $(\bar{g}_a(x,x'),\bar{g}_b(y,x'))$
corresponds to the same element of
${\cal G}( ({\cal Y}\times {\cal X}^{2})_{\vec{W}})$ as $(g_a(x,x'),g_b(y,x'))$.
To avoid this problem, we impose the condition $ \sum_{y \in {\cal Y}}V(y|x')g_b(y,x')=0$ for $x' \in 
{\cal X}$.
Hence, we denote the linear space of real-indexed functions 
$g=(g_a(x,x'),g_b(y,x'))$ defined on $({\cal X}^{2}\cup {\cal Y}\times {\cal X})_{(W,V)}$ with this constraint
by ${\cal G}_0(({\cal X}^{2}\cup {\cal Y}\times {\cal X})_{(W,V)})$.
Hence,
the space ${\cal G}(({\cal X}^{2}\cup {\cal Y}\times {\cal X})_{(W,V)})$
can be regarded as a subspace of ${\cal G}( ({\cal Y}\times {\cal X}^{2})_{\vec{W}})$.
Additionally, 
the subspace ${\cal N}^I(({\cal X}^{2}\cup {\cal Y}\times {\cal X})_{(W,V)})
:={\cal N}(({\cal Y}\times {\cal X}^{2})_{\vec{W}}) \cap
{\cal G}(({\cal X}^{2}\cup {\cal Y}\times {\cal X})_{(W,V)})$
equals the subspace ${\cal N}(({\cal Y}\times {\cal X}^{2})_{\vec{W}}) $, which is 
composed of functions with form $ f(x)-f(x')+c$.
That is, an element $g=(g_a(x,x'),g_b(y,x'))$ of the subspace ${\cal N}^I(({\cal X}^{2}\cup {\cal Y}\times {\cal X})_{(W,V)})$ has the form $g_a(x,x')= f(x)-f(x')+c $ and $g_b(y,x')=0$.

To give the relation between the $e$-representation and the $m$-representation,
we define the linear map $(W,V)_*$ on 
${\cal G}(({\cal X}^{2}\cup {\cal Y}\times {\cal X})_{(W,V)})$ as
\begin{align}
((W,V)_*g)_a(x,x'):=& g_a(x,x') W(x|x') \\
((W,V)_*g)_b(y,x'):=& g_b(y,x') V(y|x')
\end{align}
for $g\in {\cal G}(({\cal X}^{2}\cup {\cal Y}\times {\cal X})_{(W,V)})$.
To discuss the relation between the $m$-representations 
of the independent-type and the general case, 
we define the linear map $(W,V)^{*}$ 
from ${\cal G}( ({\cal X}^{2}\cup {\cal Y}\times {\cal X})_{(W,V)})$
to ${\cal G}(({\cal Y}\times {\cal X}^{2})_{(W,V)})$ as
\begin{align}
((W,V)^{*}g)(y,x,x'):= g_a(x,x')V(y|x')+W(x|x')g_b(y,x')
\end{align}
for $g\in {\cal G}( ({\cal X}^{2}\cup {\cal Y}\times {\cal X})_{(W,V)})$.
In the following, the function 
$g_a(x,x')$ is written as a matrix $B$ on ${\cal V}_{{\cal X}}$,
and 
$g_b(y,x')$ is written as a collection of vectors $(C_y)_{y}$, which belong to 
${\cal V}_{{\cal X}}$.
That is, the map $(W,V)^{*}$ is rewritten as 
\begin{align}
((W,V)^{*}(B,C))_y= B D(V_y)+WD(C_y).
\end{align}
Hence, when $(B,C) \in (W,V)^{*} {\cal G}_0(({\cal X}^{2}\cup {\cal Y}\times {\cal X})_{(W,V)})$
satisfies $\sum_y C_y=0$.

Define 
\begin{align}
{\cal G}_1( ({\cal X}^{2}\cup {\cal Y}\times {\cal X})_{(W,V)})
&:={\cal G}_1(({\cal Y}\times {\cal X}^{2})_{\vec{W}}) 
\cap {\cal G}_0(({\cal X}^{2}\cup {\cal Y}\times {\cal X})_{(W,V)}) \\
{\cal L}_{1,W,V}^I.
&:= (W,V)_*{\cal G}_1( ({\cal X}^{2}\cup {\cal Y}\times {\cal X})_{(W,V)}).
\end{align}

Then, we have the following lemma.
\begin{lemma}\Label{LRN}
The following relation holds;
\begin{align}
&{\cal L}_{1,W,V}^I =
\Big\{ 
(B,C)\in {\cal G}(({\cal X}^{2}\cup {\cal Y}\times {\cal X})_{(W,V)})
 \Big|
B^T  | u_{{\cal X}}\rangle =0,
\sum_{y \in {\cal Y}} C_{y}=0
\Big \} .
\end{align}
\end{lemma}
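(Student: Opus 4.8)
The plan is to transport the whole question through the linear map $(W,V)_*$. Since $((W,V)_*g)_a(x,x')=g_a(x,x')W(x|x')$ and $((W,V)_*g)_b(y,x')=g_b(y,x')V(y|x')$, and both $W(x|x')$ and $V(y|x')$ are nonzero on the support $({\cal X}^{2}\cup {\cal Y}\times {\cal X})_{(W,V)}$, the map $(W,V)_*$ is a linear automorphism of ${\cal G}(({\cal X}^{2}\cup {\cal Y}\times {\cal X})_{(W,V)})$. Therefore a pair $(B,C)$ in that space lies in ${\cal L}_{1,W,V}^I=(W,V)_*\,{\cal G}_1(({\cal X}^{2}\cup {\cal Y}\times {\cal X})_{(W,V)})$ if and only if its preimage $g=(g_a,g_b)$, explicitly $g_a(x,x')=B(x|x')/W(x|x')$ and $g_b(y,x')=C_y(x')/V(y|x')$, belongs to ${\cal G}_1(({\cal Y}\times {\cal X}^{2})_{\vec{W}})\cap {\cal G}_0(({\cal X}^{2}\cup {\cal Y}\times {\cal X})_{(W,V)})$. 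I would now translate each of these two membership conditions into a condition on $(B,C)$.

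The ${\cal G}_0$-condition for $g$ is $\sum_{y}V(y|x')g_b(y,x')=0$ for every $x'$, and since $V(y|x')g_b(y,x')=C_y(x')$ this is exactly $\sum_{y\in {\cal Y}}C_y=0$. For the ${\cal G}_1(({\cal Y}\times {\cal X}^{2})_{\vec{W}})$-condition, I would first note the identity $\vec{W}_*g=(W,V)^{*}((W,V)_*g)$, i.e. $(\vec{W}_*g)_y=B D(V_y)+W D(C_y)$, which is an immediate check from the defining formulas of $\vec{W}_*$, $(W,V)_*$ and $(W,V)^{*}$. Then $g\in {\cal G}_1(({\cal Y}\times {\cal X}^{2})_{\vec{W}})$ means $\vec{W}_*g\in {\cal L}_{1,\vec{W}}$, i.e. $\sum_{y}(\vec{W}_*g)_y^{T}|u_{{\cal X}}\rangle=0$. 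Taking the transpose (diagonal matrices are symmetric) and inserting $|u_{{\cal X}}\rangle$ gives $\sum_{y}\big(D(V_y)B^{T}|u_{{\cal X}}\rangle+D(C_y)W^{T}|u_{{\cal X}}\rangle\big)$; now invoke the stochasticity facts $W^{T}|u_{{\cal X}}\rangle=|u_{{\cal X}}\rangle$ and $\sum_{y}D(V_y)=I$ and the trivial $D(C_y)|u_{{\cal X}}\rangle=C_y$, which collapse this to $B^{T}|u_{{\cal X}}\rangle+\sum_{y}C_y$. So the ${\cal G}_1$-condition is precisely $B^{T}|u_{{\cal X}}\rangle+\sum_{y}C_y=0$.

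Intersecting the two, $(B,C)\in {\cal L}_{1,W,V}^I$ exactly when $\sum_{y}C_y=0$ and $B^{T}|u_{{\cal X}}\rangle+\sum_{y}C_y=0$, which is clearly equivalent to $\sum_{y}C_y=0$ and $B^{T}|u_{{\cal X}}\rangle=0$ — the asserted description. I do not foresee a genuine obstacle: the only place demanding some care is getting the identity $\vec{W}_*g=(W,V)^{*}((W,V)_*g)$ and the transposes right, after which the rest is a direct computation resting on $W$ being column-stochastic and $V$ row-stochastic.
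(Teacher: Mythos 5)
Your proposal is correct and follows essentially the same route as the paper: both reduce membership in ${\cal L}_{1,W,V}^I$ to the two conditions $\sum_{y}C_y=0$ (from the ${\cal G}_0$ constraint) and $\bigl(\sum_y(B D(V_y)+W D(C_y))\bigr)^T|u_{{\cal X}}\rangle=0$, and then collapse the latter to $B^T|u_{{\cal X}}\rangle+\sum_y C_y$ using $\sum_y D(V_y)=I$ and $W^T|u_{{\cal X}}\rangle=|u_{{\cal X}}\rangle$. Your version is slightly more explicit than the paper's in justifying the initial reduction (via the invertibility of $(W,V)_*$ on the support and the identity $\vec{W}_*g=(W,V)^{*}((W,V)_*g)$), but the substance is identical.
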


\begin{proof}
For $(B,C) \in {\cal G}(({\cal X}^{2}\cup {\cal Y}\times {\cal X})_{(W,V)}) $,
$(B,C) \in {\cal L}_{1,W,V}^I $ if and only if
\begin{align}
\sum_{y \in {\cal Y}} C_{y} &=0, \quad
(\sum_y((W,V)^{*}(B,C))_y)^T  | u_{{\cal X}}\rangle 
=0.
\end{align}
Since $W ^T| u_{{\cal X}}\rangle 
=| u_{{\cal X}}\rangle $,
we have
\begin{align}
&(\sum_y((W,V)^{*}(B,C))_y)^T  | u_{{\cal X}}\rangle 
=(\sum_y B D(V_y)+WD(C_y))^T
| u_{{\cal X}}\rangle \nonumber \\
=&( B+W \sum_y D(C_y))^T
| u_{{\cal X}}\rangle 
=( B^T+\sum_y D(C_y) W ^T)
| u_{{\cal X}}\rangle \nonumber \\
=&( B^T+\sum_y D(C_y))
| u_{{\cal X}}\rangle 
=B^T  | u_{{\cal X}}\rangle+
\sum_{y \in {\cal Y}} C_{y}.
\end{align}
Hence, we obtain the desired statement.
\end{proof}

The space ${\cal G}_1( ({\cal X}^{2}\cup {\cal Y}\times {\cal X})_{(W,V)})$
equals the space
${\cal G}_1(({\cal Y}\times {\cal X}^{2})_{(W,V)}) \cap
{\cal G}(({\cal X}^{2}\cup {\cal Y}\times {\cal X})_{(W,V)})$.
The space $ {\cal L}_{1,W,V}^I $  equals the space 
$(W,V)^{*}{\cal L}_{1,(W,V)}\cap
(W,V)^{*}{\cal G}(({\cal X}^{2}\cup {\cal Y}\times {\cal X})_{(W,V)})$.

Assume that functions $g_1, \ldots, g_l \in {\cal G}(({\cal X}^{2}\cup {\cal Y}\times {\cal X})_{(W,V)})$
are linearly independent as elements of ${\cal G}(({\cal X}^{2}\cup {\cal Y}\times {\cal X})_{(W,V)})/{\cal N}
(({\cal X}^{2}\cup {\cal Y}\times {\cal X})_{(W,V)})$
for $\vec{\theta}:=(\theta^1, \ldots, \theta^l) \in \bR^l$.
We define the transition matrix
$$ V_{\vec{\theta}}(y|x'):=
e^{\sum_{j=1}^l \theta^j g_{j,b}(y,x')} V(y|x')
/\sum_{y'}e^{\sum_{j=1}^l \theta^j g_{j,b}(y',x')} V(y'|x').$$
That is, for each $x' \in {\cal X}$, $ V_{\vec{\theta}}(y|x')$ forms an exponential family of distributions on ${\cal Y}$.
Also, we define the matrix
$$\overline{W}_{\vec{\theta}}(x|x')
:=\sum_{y} e^{\sum_{j=1}^l \theta^j (g_{j,a}(x, x') + g_{j,b}(y,x'))}V(y|x') W(x|x'),$$
and denote its Perron-Frobenius eigenvalue by $\lambda_{\vec{\theta}}$.
Also, we denote the Perron-Frobenius eigenvector of 
the transpose $\overline{W}_{\vec{\theta}}^T$
by $\overline{P}^3_{\vec{\theta}}$.
Then, we define the transition matrix
$W_{\vec{\theta}}(x| x'):=
\lambda_{\vec{\theta}}^{-1} \overline{P}^3_{\vec{\theta}}(x)
\overline{W}_{\vec{\theta}}(x|x') \overline{P}^3_{\vec{\theta}}(x')^{-1}$
on ${\cal X}$.
The ${\cal Y}$-indexed transition matrix 
generated by $g_1, \ldots, g_l$ is given as
\begin{align}
&W_{\vec{\theta},y}(x|x')
=
\lambda_{\vec{\theta}}^{-1} \overline{P}^3_{\vec{\theta}}(x)
e^{\sum_{j=1}^l \theta^j (g_{j,a}(x, x') + g_{j,b}(y,x'))}V(y|x') W(x|x')
\overline{P}^3_{\vec{\theta}}(x')^{-1} \nonumber \\
=&
\lambda_{\vec{\theta}}^{-1} \overline{P}^3_{\vec{\theta}}(x)
\overline{W}_{\vec{\theta}}(x|x') \overline{P}^3_{\vec{\theta}}(x')^{-1}
 V_{\vec{\theta}}(y|x')
=W_{\vec{\theta}}(x| x') V_{\vec{\theta}}(y|x').
\end{align}
That is, 
the family $(W_{\vec{\theta}},V_{\vec{\theta}})$ coincides with 
the exponential family of 
${\cal Y}$-indexed transition matrices on ${\cal X}$
generated by $g_1, \ldots, g_l $.
Hence, the family $(W_{\vec{\theta}},V_{\vec{\theta}})$ is called
an exponential family of independent-type ${\cal Y}$-indexed transition matrices.
Since 
an exponential family of 
${\cal Y}$-indexed transition matrices is a special case of 
an exponential family of transition matrices on ${\cal X}\times {\cal Y}$,
an exponential family of independent-type 
${\cal Y}$-indexed transition matrices is a special case of 
an exponential family of transition matrices on ${\cal X}\times {\cal Y}$.

\begin{example}\Label{ERT}
As an example, we consider 
the full parameter model of independent-type ${\cal Y}$-indexed transition matrices
on ${\cal X}$.
That is, we assume that the support
 $({\cal X}^{2}\cup {\cal Y}\times {\cal X})_{(W,V)}$ is ${\cal X}^{2}\cup {\cal Y}\times {\cal X}$
 and $W$ is irreducible.
The tangent space of the model is given by the space ${\cal L}_{1,W,V}^I $, whose dimension is 
$l:=d^2-d+d d_Y-d=d(d+d_Y-2)$.
In this case, we can easily find the generators as follows.
Here, we do not necessarily choose the generators from 
${\cal G}_1( ({\cal X}^{2}\cup {\cal Y}\times {\cal X})_{(W,V)})$.
That is, it is sufficient to choose them as elements of 
${\cal G}( ({\cal X}^{2}\cup {\cal Y}\times {\cal X})_{(W,V)})$.
For simplicity, we assume that ${\cal X}=\{1,\ldots, d\}$ and ${\cal Y}=\{1,\ldots, d_Y\}$.
We choose the functions 
$g_{j,a}$ and $g_{j,b}$ for $ 1\le j \le d_Y-1$, 
the functions 
$g_{i(d_Y-1)+j,a}$ and 
$g_{i(d_Y-1)+j,b}$ for $1\le i \le d-1, 1\le j \le d_Y-1$,
and the functions $g_{d(d_Y-1)+(i-1)(d-1)+j,a}$ and $g_{d(d_Y-1)+(i-1)(d-1)+j,b}$ 
for $1\le i \le d, 1\le j \le d-1$ as
\begin{align}
g_{j,a}(x,x') &:= 0 \Label{FR1}\\
g_{j,b}(y,x') &:=\delta_{y,j} \Label{FR2}\\
g_{i(d_Y-1)+j,a}(x,x') &:= 0 \Label{FR3}\\
g_{i(d_Y-1)+j,b}(y,x') &:= \delta_{x',i}\delta_{y,j}\Label{FR4} \\
g_{d(d_Y-1)+(i-1)(d-1)+j,a}(x,x') &:= \delta_{x',i}\delta_{x,j} \Label{FR5}\\
g_{d(d_Y-1)+(i-1)(d-1)+j,b}(y,x') &:= 0 .\Label{FR6}
\end{align}
Then, the functions $g_i'=(g_{i',a},g_{i',b})$ are linearly independent.
We can parametrize the full model of independent-type ${\cal Y}$-indexed transition matrices
by using this generators.
In particular, the first $d_Y-1$ functions belong to ${\cal G}({\cal Y}_{\vec{W}})$.
That is, the maximum number $l'$ of observed generators is $d_Y-1$ similar to \eqref{3-8-1}
because this number is upper bounded by $d_Y-1$.
\end{example}

Then, we obtain 
the exponential family of independent-type ${\cal Y}$-indexed transition matrices 
$\{(W_{\vec{\theta}},V_{\vec{\theta}})\}_{\vec{\theta}\in \bR^{l}}$,
which is generated by the above generators $g_{1}, \ldots, g_{l}$
at $(W,V)$.
While the set $\{(W_{\vec{\theta}},V_{\vec{\theta}})\}_{\vec{\theta}\in \bR^{l}}$
contains elements equivalent to each other,
we have the following lemma.
\begin{lemma}\Label{LKD}
When the independent-type ${\cal Y}$-indexed transition matrices $(W,V)$ satisfies Condition
$({\cal X}^{2}\cup {\cal Y}\times {\cal X})_{(W,V)}={\cal X}^{2}\cup {\cal Y}\times {\cal X}$,
the above defined set $\{(W_{\vec{\theta}},V_{\vec{\theta}}) \}_{\vec{\theta}}$ equals the set of
independent-type ${\cal Y}$-indexed transition matrices $(W',V')$ on ${\cal X}$ satisfying the relation
 $({\cal X}^{2}\cup {\cal Y}\times {\cal X})_{(W',V')}={\cal X}^{2}\cup {\cal Y}\times {\cal X}$.
\end{lemma}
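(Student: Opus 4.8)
The plan is to prove Lemma \ref{LKD} by establishing the two inclusions between $\{(W_{\vec{\theta}},V_{\vec{\theta}})\}_{\vec{\theta}}$ and the set of strictly positive independent-type ${\cal Y}$-valued transition matrices. For ``$\subseteq$'' I would just observe positivity: $V_{\vec{\theta}}(y|x')$ is a ratio of strictly positive quantities, while $\overline{W}_{\vec{\theta}}(x|x')=\sum_{y}e^{\sum_j\theta^j(g_{j,a}(x,x')+g_{j,b}(y,x'))}V(y|x')W(x|x')$ has all entries strictly positive (hence is irreducible), so its Perron--Frobenius eigenvalue $\lambda_{\vec{\theta}}$ is positive and the left Perron--Frobenius eigenvector $\overline{P}^3_{\vec{\theta}}$ has all positive entries; therefore $W_{\vec{\theta}}(x|x')=\lambda_{\vec{\theta}}^{-1}\overline{P}^3_{\vec{\theta}}(x)\overline{W}_{\vec{\theta}}(x|x')\overline{P}^3_{\vec{\theta}}(x')^{-1}>0$, and $({\cal X}^{2}\cup {\cal Y}\times {\cal X})_{(W_{\vec{\theta}},V_{\vec{\theta}})}={\cal X}^{2}\cup {\cal Y}\times {\cal X}$.

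For ``$\supseteq$'' I would take an arbitrary $(W',V')$ with full support and build $\vec{\theta}$ in two stages, using the explicit generators of Example \ref{ERT}. \emph{Stage 1 (matching $V'$).} Since $g_{j,a}$ vanishes on the ``$b$-type'' indices, the factor produced by the $b$-type part is exactly $V_{\vec{\theta}}(y|x')=e^{\eta(y,x')}V(y|x')/Z(x')$ with $\eta(y,x'):=\sum_{b\text{-type }j}\theta^j g_{j,b}(y,x')$ and $Z(x'):=\sum_{y'}e^{\eta(y',x')}V(y'|x')$. The listed $b$-type generators ($g_{j,b}=\delta_{y,j}$ for $1\le j\le d_Y-1$, and $g_{i(d_Y-1)+j,b}=\delta_{x',i}\delta_{y,j}$ for $1\le i\le d-1$) realize every $\eta$ with $\eta(d_Y,\cdot)\equiv 0$: the first block fixes the profile at $x'=d$, and for each $x'=i\le d-1$ the block $g_{i(d_Y-1)+\cdot,b}$ supplies an independent shift. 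Choosing $\eta(y,x')=\log\bigl(V'(y|x')/V(y|x')\bigr)-\log\bigl(V'(d_Y|x')/V(d_Y|x')\bigr)$, which is finite because $V'$ and $V$ are strictly positive, yields $V_{\vec{\theta}}=V'$ and fixes the $b$-type components of $\vec{\theta}$ as well as the column factors $Z(x')$.

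\emph{Stage 2 (matching $W'$).} With the $b$-type components frozen, $\overline{W}_{\vec{\theta}}(x|x')=e^{\mu(x,x')}Z(x')W(x|x')$, where $\mu(x,x'):=\sum_{a\text{-type }j}\theta^j g_{j,a}(x,x')$ is, through the generators $g_{d(d_Y-1)+(i-1)(d-1)+j,a}=\delta_{x',i}\delta_{x,j}$ $(1\le i\le d,\ 1\le j\le d-1)$, an arbitrary real array subject only to $\mu(d,\cdot)\equiv 0$. I would use the fact that $W_{\vec{\theta}}=\lambda_{\vec{\theta}}^{-1}D(\overline{P}^3_{\vec{\theta}})\,\overline{W}_{\vec{\theta}}\,D(\overline{P}^3_{\vec{\theta}})^{-1}$ is invariant under $\overline{W}\mapsto \beta\,D(f)\overline{W}D(f)^{-1}$ (adding an element of ${\cal N}$ to the exponent), and that it returns $W'$ when $\overline{W}=W'$ (since $W'$ is stochastic, $\lambda=1$, $\overline{P}^3=u_{{\cal X}}$). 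Hence it suffices to choose the free data so that $e^{\mu(x,x')}Z(x')W(x|x')=\beta f(x)W'(x|x')f(x')^{-1}$ holds entrywise for some $\beta>0$, $f>0$: the $(d,d)$ entry forces $\beta=Z(d)W(d|d)/W'(d|d)$, the row $x=d$ (where $\mu\equiv 0$) then forces $f(x')=\beta W'(d|x')/(Z(x')W(d|x'))$ with $f(d)=1$, and the remaining entries merely define $\mu(x,x')$ for $x\le d-1$. All quantities are finite and positive because $W',W,V',V$ are strictly positive, so $\vec{\theta}\in\mathbb{R}^l$ and $(W_{\vec{\theta}},V_{\vec{\theta}})=(W',V')$, which completes ``$\supseteq$''.

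The step I expect to be the main obstacle is the coupling exposed in Stage 2: the parameters spent in Stage 1 on $V'$ re-enter $\overline{W}_{\vec{\theta}}$ as the column factors $Z(x')$, so the $W$-part and $V$-part of the family are not literally a product. The resolution is that $Z(x')$ depends on $x'$ only and can therefore be absorbed into the right gauge $D(f)$, which is exactly why the $a$-type freedom (modulo ${\cal N}$) still reaches every strictly positive $W'$. A dimension count, $\dim{\cal L}_{1,W,V}^I=d(d+d_Y-2)=l$, and the observation that the construction above actually determines $\vec{\theta}$ uniquely, serve as consistency checks and in fact show the parametrization of the full independent model is a bijection, although only the set equality is needed here.
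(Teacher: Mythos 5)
Your proof is correct and follows essentially the same two-stage route as the paper's own (very terse) argument: first realize an arbitrary full-support $V'$ by freely choosing the $b$-type parameters, then realize an arbitrary full-support $W'$ by freely choosing the $a$-type parameters, using the fact that the Perron--Frobenius normalization is invariant under $\overline{W}\mapsto\beta D(f)\overline{W}D(f)^{-1}$. The only substantive difference is that you explicitly identify and absorb the column factor $Z(x')$ coming from Stage 1 into the gauge $D(f)$, a coupling the paper's proof passes over in silence; this is a useful clarification rather than a different method.
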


\begin{proof}
When we freely choose the parameters $\theta_1, \ldots, \theta_{d (d_Y-1)}$,
the set  of $V_{\vec{\theta}}$ equals the set of transition matrices from ${\cal X}$ to ${\cal Y}$ with full support.
Next, we fix the parameters $\theta_1, \ldots, \theta_{d (d_Y-1)}$ and freely choose the remaining parameters
$\theta_{d (d_Y-1)+1}, \ldots, \theta_{d (d_Y-1)+d^2-d}$.
Then, the set $\{W_{\vec{\theta}}\}$ forms the exponential family generated by 
$g_{d (d_Y-1)+1,a}, \ldots, g_{d (d_Y-1)+d^2-d,a} $ at 
$W_{\theta_1, \ldots, \theta_{d (d_Y-1)}, 0, \ldots, 0} $.
Hence, the set $\{W_{\vec{\theta}}\}$ 
equals the set of transition matrices on ${\cal X}$ with full support.
\hfill$\Box$\end{proof}

\subsection{Local equivalence}
Next, we address the local equivalence problem at
a given independent-type ${\cal Y}$-indexed transition matrix $(W,V)$.
This is because 
we cannot necessarily distinguish all the elements of 
the above exponential family because due to the local equivalence problem.
Based on $(W,V)$,
we define the subspaces as
\begin{align}
(W,V)^{*}{\cal L}_{2,W,V}^I:=&
{\cal L}_{2,(W,V)}\cap (W,V)^{*} {\cal L}_{1,W,V}^I 
\\
(W,V)^{*}{\cal L}_{P,W,V}^I:=&
{\cal L}_{P,(W,V)}\cap (W,V)^{*}{\cal L}_{1,W,V}^I 
\\
(W,V)^{*}{\cal L}_{2,P,W,V}^I:=&
{\cal L}_{2,P,(W,V)}\cap (W,V)^{*}{\cal L}_{1,W,V}^I .
\end{align}
Then, we define ${\cal N}_2^I(({\cal X}^{2}\cup {\cal Y}\times {\cal X})_{(W,V)}) :=
(W,V)_*^{-1}({\cal L}_{2,\vec{W}}^I )$,
${\cal N}_P^I(({\cal X}^{2}\cup {\cal Y}\times {\cal X})_{(W,V)}) :=
(W,V)_*^{-1}({\cal L}_{P,\vec{W}}^I )$,
${\cal N}_{2,P}^I(({\cal X}^{2}\cup {\cal Y}\times {\cal X})_{(W,V)}) :=
(W,V)_*^{-1}({\cal L}_{2,P,\vec{W}}^I )$.
By using ${\cal N}^I(({\cal X}^{2}\cup {\cal Y}\times {\cal X})_{(W,V)})$ and these spaces,
Theorems \ref{L27-2} and \ref{27-3} characterize 
generators of the following condition;
the derivative of the direction of the generator vanishes in the observed distribution.
That is, the infinitesimal changes of the direction of the generator cannot be observed.


Then, ${\cal L}_{P,W,V}^I$ is written as follows.
\begin{align}
{\cal L}_{P,W,V}^I=&
\Big\{ 
(B,C)\in {\cal G}(({\cal X}^{2}\cup {\cal Y}\times {\cal X})_{(W,V)})
 \Big|
\hbox{Conditions \eqref{C1} and \eqref{C2} hold.}
\Big \},
\end{align}
where
Conditions \eqref{C1} and \eqref{C2} are defined as
\begin{align}
& B^T | u_{{\cal X}}\rangle = 0,\Label{C1} \\
& (W D(C_{y})+B D(V_y)) ({\cal V}^{k_{(P,(W,V))}}(P) +\Ker P^{k_{(W,V)}}[(W,V)] )
\subset  \Ker  P^{k_{(W,V)}}[(W,V)] 
\Label{C2}.
\end{align}
Here, ${\cal V}^{k_{(P,(W,V))}}(P) +\Ker P^{k_{(W,V)}}[(W,V)]$ expresses the subspace of 
${\cal V}_{{\cal X}}$ generated by $\Ker P^{k_{(W,V)}}[(W,V)]$ and the representatives of 
${\cal V}^{k_{(P,(W,V))}}(P)$ while
${\cal V}^{k_{(P,(W,V))}}(P)$ is a subspace of 
the quotient space ${\cal V}_{{\cal X}}/\Ker P^{k_{(W,V)}}[(W,V)]$.

To characterize other spaces ${\cal L}_{2,W,V}^I$ and ${\cal L}_{2,P,W,V}^I$,
for an element $x \in {\cal X}$, we define the subset $S(V)_x \subset{\cal X}$
by $S(V)_x := \{ x'\in {\cal X} |V_{*,x}=V_{*,x'} \}$.
For a subset $S \subset {\cal X}$,
we define the subspace ${\cal V}_{S}\subset {\cal V}_{{\cal X}}$ as 
the set of functions whose support is included in $S$.
The projection to ${\cal V}_{S}$ is denoted by $I_{S}$.
Then, the spaces ${\cal L}_{2,W,V}^I$ and ${\cal L}_{2,P,W,V}^I$
are characterized in the following theorem.

\begin{theorem}\Label{FET2}
For an independent-type ${\cal Y}$-indexed transition matrix $(W,V)$,
we have the following relations as subspaces of ${\cal G}(({\cal X}^{2}\cup {\cal Y}\times {\cal X})_{(W,V)})$;
\begin{align}
{\cal L}_{2,W,V}^I
=&
\{ ([W,A] , C ) | 
A^T| u_{{\cal X}} \rangle=0,~
W[D(V_y),A]=W D(C_y)
\} \Label{YF1T}\\
{\cal L}_{2,P,W,V}^I
=&
\{ ([W,A] , C ) | 
A^T| u_{{\cal X}} \rangle=0,~
A| P \rangle=0,~
W[D(V_y),A]=W D(C_y)
\} \Label{YF2T}.
\end{align}
\end{theorem}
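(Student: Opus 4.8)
The plan is to unfold the definitions of ${\cal L}_{2,W,V}^I$ and ${\cal L}_{2,P,W,V}^I$ through the map $(W,V)^{*}$, reduce the defining conditions to a linear system indexed by $y\in{\cal Y}$, and solve that system by summing over $y$. Throughout, write $\vec{W}$ for the ${\cal Y}$-valued transition matrix attached to $(W,V)$, so that $W_y=WD(V_y)$ and $\alpha_y(A)=[W_y,A]=[WD(V_y),A]$. By Lemma \ref{LRN}, ${\cal L}_{1,W,V}^I$ is exactly the set of pairs $(B,C)$ with $B^T|u_{{\cal X}}\rangle=0$ and $\sum_{y\in{\cal Y}}C_y=0$, and the first thing I would record is that on this set the map $(W,V)^{*}\colon(B,C)\mapsto(BD(V_y)+WD(C_y))_{y}$ is injective: if every $BD(V_y)+WD(C_y)$ vanishes, summing over $y$ and using $\sum_y D(V_y)=I$ gives $B=-WD(\sum_y C_y)=0$, whence $WD(C_y)=0$ for each $y$, and since every column of the transition matrix $W$ is a nonzero vector this forces $C_y=0$. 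Consequently ${\cal L}_{2,W,V}^I$ is precisely the set of $(B,C)$ with $B^T|u_{{\cal X}}\rangle=0$, $\sum_y C_y=0$, and $(BD(V_y)+WD(C_y))_{y}=(\alpha_y(A))_y=([WD(V_y),A])_y$ for some $A$ with $A^T|u_{{\cal X}}\rangle=0$; for ${\cal L}_{2,P,W,V}^I$ one requires in addition an $A$ with $A|P\rangle=0$, since that is exactly what separates ${\cal L}_{2,P,(W,V)}$ from ${\cal L}_{2,(W,V)}$.

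The algebraic heart of the proof is the identity $[WD(V_y),A]=[W,A]D(V_y)+W[D(V_y),A]$, which holds for every matrix $A$ because both sides expand to $WD(V_y)A-AWD(V_y)$. Starting from a solution $BD(V_y)+WD(C_y)=[WD(V_y),A]$ of the system above, I would sum over $y\in{\cal Y}$ and use $\sum_y D(V_y)=I$ together with $\sum_y C_y=0$ to obtain $B=[W,A]$; substituting this back and invoking the identity yields $WD(C_y)=W[D(V_y),A]$ for every $y$. This gives the inclusion $\subseteq$ in \eqref{YF1T}, and the same computation carrying the extra property $A|P\rangle=0$ gives $\subseteq$ in \eqref{YF2T}.

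For the reverse inclusions I would start from $([W,A],C)$ with $A^T|u_{{\cal X}}\rangle=0$ and $W[D(V_y),A]=WD(C_y)$ and run the identity in the other direction: $[W,A]D(V_y)+WD(C_y)=[W,A]D(V_y)+W[D(V_y),A]=[WD(V_y),A]=\alpha_y(A)$, so $(W,V)^{*}([W,A],C)$ lies in ${\cal L}_{2,(W,V)}$, and in ${\cal L}_{2,P,(W,V)}$ when moreover $A|P\rangle=0$. It then remains to see that $([W,A],C)$ lies in ${\cal L}_{1,W,V}^I$: the relation $[W,A]^T|u_{{\cal X}}\rangle=A^TW^T|u_{{\cal X}}\rangle-W^TA^T|u_{{\cal X}}\rangle=A^T|u_{{\cal X}}\rangle-W^TA^T|u_{{\cal X}}\rangle=0$ uses $W^T|u_{{\cal X}}\rangle=|u_{{\cal X}}\rangle$ and $A^T|u_{{\cal X}}\rangle=0$, while $\sum_y C_y=0$ follows from $WD(\sum_y C_y)=\sum_y W[D(V_y),A]=W[\sum_y D(V_y),A]=W[I,A]=0$ and, once more, the fact that no column of $W$ vanishes. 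This closes both equalities.

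I expect the only genuinely delicate point to be the bookkeeping around the two maps $(W,V)_*$ and $(W,V)^{*}$, and in particular justifying that ${\cal L}_{2,W,V}^I$ is well defined as the $(W,V)^{*}$-preimage inside ${\cal L}_{1,W,V}^I$ — this is why I would establish injectivity of $(W,V)^{*}$ on ${\cal L}_{1,W,V}^I$ at the very start. A second, milder point is that in \eqref{YF1T}--\eqref{YF2T} the matrix $A$ is existentially quantified (two matrices with the same commutator $[W,A]$ need not both annihilate $P$), so the $P$-case set should be read as $\{(B,C):\exists\,A,\ B=[W,A],\ A^T|u_{{\cal X}}\rangle=0,\ A|P\rangle=0,\ WD(C_y)=W[D(V_y),A]\ \forall y\}$; but this causes no difficulty because the forward direction hands us such an $A$ straight from the definition of ${\cal L}_{2,P,(W,V)}$. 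Everything else is routine matrix manipulation.
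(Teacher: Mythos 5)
Your proposal is correct and follows essentially the same route as the paper's proof: reduce membership to the existence of $A$ with $A^T|u_{{\cal X}}\rangle=0$ solving $\alpha_y(A)=BD(V_y)+WD(C_y)$, sum over $y$ using $\sum_y D(V_y)=I$ and $\sum_y C_y=0$ to extract $B=[W,A]$, substitute back to isolate $W[D(V_y),A]=WD(C_y)$, reverse the computation for the converse, and obtain the $P$-version by appending $A|P\rangle=0$. Your added remarks on the injectivity of $(W,V)^{*}$ on ${\cal L}_{1,W,V}^I$ and on the existential quantification of $A$ are sound refinements of points the paper leaves implicit, not a different argument.
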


\begin{proof}
For an element $(B,C) \in  {\cal L}_{1,(W,V)}^I$, 
$(B,C) \in {\cal L}_{2,(W,V)}^I$ if and only if  
there exists $A$ such that 
\begin{align} 
A^T |u_{{\cal X}}\rangle &=0 \Label{2-76} \\
WD(V_y) A-A WD(V_y) &= BD(V_y) +W D(C_y)\Label{2-9-10}.
\end{align}
Since $\sum_y D(V_y)=I$ and $\sum_y C_y=0$, 
taking the sum of \eqref{2-9-10} with respect to $y$, we have
\begin{align} 
[W,A]
=& W (\sum_y D(V_y)) A-A W (\sum_y D(V_y))
=\sum_y WD(V_y) A-A WD(V_y) \nonumber \\
=& \sum_y BD(V_y) +W D(C_y)
= B(\sum_y D(V_y)) +W D(\sum_y C_y)
= B.\Label{KKT}
\end{align}
Combining \eqref{2-9-10} and \eqref{KKT}, we have
\begin{align}
W [D(V_y), A]=W D(C_y).\Label{2-9-11T} 
\end{align}
Conversely, \eqref{KKT} and \eqref{2-9-11T} imply \eqref{2-9-10} and 
$\sum_{y \in {\cal Y}} C_y=0$.
Further,  \eqref{2-76} and \eqref{KKT} imply $B^T |u_{{\cal X}}\rangle=0$.
Hence, we obtain the relation \eqref{YF1T}.

Since ${\cal L}_{2,P,(W,V)}$ is given from ${\cal L}_{2,(W,V)}$ by adding the condition $
A| P \rangle=0$,
we obtain the relations \eqref{YF2T}.
\hfill$\Box$\end{proof}

Based on Theorem \ref{FET2}, we can characterize the subspaces 
${\cal L}_{2,W,V}^I$ and ${\cal L}_{2,P,W,V}^I$
as the following two corollaries.

\begin{corollary}\Label{FET}
For an independent-type ${\cal Y}$-indexed transition matrix $(W,V)$,
we assume that 
$W$ is invertible. 
Then, we have the following relations as subspaces of ${\cal G}(({\cal X}^{2}\cup {\cal Y}\times {\cal X})_{(W,V)})$;
\begin{align}
{\cal L}_{2,W,V}^I
=&
\{ ([W,A] , 0 ) | 
A^T| u_{{\cal X}} \rangle=0,~
[I_{S(V)_x},A]=0
\} \Label{YF1}\\
{\cal L}_{2,P,W,V}^I
=&
\{ ([W,A] , 0 ) | 
A^T| u_{{\cal X}} \rangle=0,~
A| P \rangle=0,~
[I_{S(V)_x},A]=0
\} \Label{YF2}.
\end{align}
\end{corollary}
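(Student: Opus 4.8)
The plan is to read both identities straight off Theorem \ref{FET2}, using invertibility of $W$ to eliminate the auxiliary collection $C$ and to reduce the remaining constraint on $A$ to commutation with the projections $I_{S(V)_x}$.

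First I would note that, since $W$ is invertible, the constraint $W[D(V_y),A]=WD(C_y)$ in Theorem \ref{FET2} is equivalent to $[D(V_y),A]=D(C_y)$ for every $y\in{\cal Y}$. Computing entrywise, the $(x,x')$-entry of $[D(V_y),A]$ equals $(V(y|x)-V(y|x'))\,A(x|x')$; in particular every diagonal entry of $[D(V_y),A]$ vanishes. Since $D(C_y)$ is diagonal, matching diagonals forces $C_y=0$ for all $y$ (so $\sum_y C_y=0$ is automatic and the $C$-component is trivial), while matching off-diagonal entries forces $(V(y|x)-V(y|x'))\,A(x|x')=0$ for all $y\in{\cal Y}$ and all $x\neq x'$.

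Next I would translate this family of equations into the stated form. For fixed $x\neq x'$, the conditions $(V(y|x)-V(y|x'))\,A(x|x')=0$ hold for every $y$ precisely when $A(x|x')=0$ or $V_{*,x}=V_{*,x'}$, i.e.\ when $A(x|x')\neq 0$ implies $x'\in S(V)_x$. Since $x\in S(V)_x$ always, this is exactly the statement that $A$ is block-diagonal with respect to the partition of ${\cal X}$ into the classes $\{S(V)_x\}$. It then remains to identify block-diagonality with the requirement $[I_{S(V)_x},A]=0$ for every $x$: a matrix commutes with the coordinate projection onto ${\cal V}_S$ iff it preserves both ${\cal V}_S$ and its complement, and intersecting over the blocks $S=S(V)_x$ gives exactly the matrices that are block-diagonal for that partition. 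Conversely, any $A$ with $A^T|u_{{\cal X}}\rangle=0$ and $[I_{S(V)_x},A]=0$ satisfies $[D(V_y),A]=0=D(0)$, so $([W,A],0)\in{\cal L}_{2,W,V}^I$ by Theorem \ref{FET2}; this establishes \eqref{YF1}. For \eqref{YF2} I would simply carry the extra constraint $A|P\rangle=0$ along, which is inert under all of the above, since ${\cal L}_{2,P,W,V}^I$ differs from ${\cal L}_{2,W,V}^I$ only by that constraint, as recorded in the proof of Theorem \ref{FET2}.

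The only step needing any care is the identification of the entrywise vanishing conditions with $[I_{S(V)_x},A]=0$: one should check that $x'\in S(V)_x$ implies $S(V)_{x'}=S(V)_x$, so that the sets $S(V)_x$ genuinely partition ${\cal X}$ and the associated projections are mutually orthogonal and sum to the identity, and that ``commuting with every such projection'' coincides with ``block-diagonal for the partition''. These are routine linear-algebra facts, so I do not expect a serious obstacle; essentially all of the content is already in Theorem \ref{FET2}, and invertibility of $W$ does the rest.
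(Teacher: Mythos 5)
Your proposal is correct and follows essentially the same route as the paper's proof: invertibility of $W$ reduces the constraint to $[D(V_y),A]=D(C_y)$, the vanishing diagonal forces $C_y=0$ and $[D(V_y),A]=0$, the off-diagonal entries give the condition $x'\in S(V)_x$ whenever $A(x|x')\neq 0$, i.e.\ $[I_{S(V)_x},A]=0$, and the second identity follows by appending $A|P\rangle=0$. Your extra remark about verifying that the sets $S(V)_x$ partition ${\cal X}$ is a reasonable (and easily checked) point of care that the paper leaves implicit.
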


\begin{proof}
We choose $A$ and $C$ such that
\begin{align} 
A^T |u_{{\cal X}}\rangle &=0  \\
W[D(V_y),A]&=W D(C_y)
\Label{2-9-10V}.
\end{align}
Since $W$ is invertible, 
$[D(V_y),A]= D(C_y)$.
Since the diagonal elements of $[D(V_y),A]$ are zero, 
we have 
\begin{align}
D(V_y) A &=AD(V_y),\Label{KFC}\\
C_y&=0.\Label{KFCe}
\end{align}
When the $(x,x')$ component of $A$ is not zero for $x \neq x'$,
we have $V(y|x)=V(y|x')$, which implies the relation $x' \in S(V)_x$. Hence, we have
\begin{align}
[I_{S(V)_x},A]=0 . \Label{KFD}
\end{align}
Conversely, the combination of \eqref{KFCe} and \eqref{KFD} implies \eqref{2-9-10V}.
Hence, we obtain the relation \eqref{YF1}.
Since ${\cal L}_{2,P,(W,V)}$ is given from ${\cal L}_{2,(W,V)}$ by adding the condition $
A| P \rangle=0$,
we obtain the relations \eqref{YF2}.
\hfill$\Box$\end{proof}

\begin{corollary}\Label{FETE}
For an independent-type ${\cal Y}$-indexed transition matrix $(W,V)$,
we assume that $W(x|x')=\frac{1}{d}$.
Then, we have the following relations as subspaces of ${\cal G}(({\cal X}^{2}\cup {\cal Y}\times {\cal X})_{(W,V)})$;
\begin{align}
{\cal L}_{2,W,V}^I
=&
\{ ([W,A] , A^T V_y ) | 
A^T| u_{{\cal X}} \rangle=0
\} \Label{YF3}\\
{\cal L}_{2,P,W,V}^I
=&
\{ ([W,A] , A^T V_y ) | 
A^T| u_{{\cal X}} \rangle=0,~
A| P \rangle=0
\} \Label{YF4}.
\end{align}
\end{corollary}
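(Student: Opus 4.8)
The plan is to deduce Corollary~\ref{FETE} from Theorem~\ref{FET2} by inserting the explicit form of $W$ into the defining relation $W[D(V_y),A]=W D(C_y)$ and solving for the vector family $C=(C_y)_{y\in{\cal Y}}$.

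First I would record that the hypothesis $W(x|x')=\frac1d$ means, as a linear map on ${\cal V}_{{\cal X}}$, that $W=\frac1d\,|u_{{\cal X}}\rangle\langle u_{{\cal X}}|$. Consequently, for any matrices $M,M'\in{\cal M}({\cal V}_{{\cal X}})$ one has $WM=\frac1d\,|u_{{\cal X}}\rangle\langle u_{{\cal X}}|M$, so that $WM=WM'$ holds if and only if $\langle u_{{\cal X}}|M=\langle u_{{\cal X}}|M'$, equivalently $M^T|u_{{\cal X}}\rangle=M'^T|u_{{\cal X}}\rangle$ (using $|u_{{\cal X}}\rangle\neq 0$). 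In particular this is the complementary regime to Corollary~\ref{FET}: here $W$ has rank one when $d>1$, so the argument based on invertibility of $W$ does not apply.

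Next I would apply this equivalence to the condition $W[D(V_y),A]=W D(C_y)$ coming from \eqref{YF1T}. On the diagonal side, $D(C_y)^T|u_{{\cal X}}\rangle=D(C_y)|u_{{\cal X}}\rangle=C_y$; on the other side, $[D(V_y),A]^T|u_{{\cal X}}\rangle = A^T D(V_y)|u_{{\cal X}}\rangle - D(V_y)A^T|u_{{\cal X}}\rangle = A^T V_y - D(V_y)A^T|u_{{\cal X}}\rangle$, using $D(V_y)|u_{{\cal X}}\rangle=V_y$. The crux of the argument is then to invoke the constraint $A^T|u_{{\cal X}}\rangle=0$, which is already part of the description in Theorem~\ref{FET2}: it annihilates the second term, so that $W[D(V_y),A]=W D(C_y)$ is equivalent, given $A^T|u_{{\cal X}}\rangle=0$, to $C_y=A^T V_y$ for every $y\in{\cal Y}$. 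I would also note that this choice automatically satisfies $\sum_y C_y=0$, since $\sum_y A^T V_y = A^T\sum_y V_y = A^T|u_{{\cal X}}\rangle=0$, so no hidden compatibility condition is being dropped.

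Substituting $C_y=A^T V_y$ back into the two sets described in Theorem~\ref{FET2} then yields \eqref{YF3} and \eqref{YF4} directly, the latter merely carrying the additional constraint $A|P\rangle=0$ inherited from ${\cal L}_{2,P,(W,V)}$. I expect the only real obstacle to be the careful bookkeeping with transposes and with the bra--ket notation for the rank-one matrix $W$; once the identity $[D(V_y),A]^T|u_{{\cal X}}\rangle = A^T V_y$ under $A^T|u_{{\cal X}}\rangle=0$ is established, the corollary follows by pure substitution with no further computation.
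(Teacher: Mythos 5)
Your proposal is correct and follows essentially the same route as the paper: both reduce $W[D(V_y),A]=WD(C_y)$ for the rank-one uniform $W$ to the equality of column sums $([D(V_y),A])^T|u_{{\cal X}}\rangle=C_y$, and then use $A^T|u_{{\cal X}}\rangle=0$ to conclude $C_y=A^TV_y$. Your additional check that $\sum_y A^TV_y=A^T|u_{{\cal X}}\rangle=0$ is a worthwhile piece of bookkeeping the paper leaves implicit, but it does not change the argument.
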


\begin{proof}
The condition $W[D(V_y),A]=W D(C_y)$ is equivalent to 
the condition $([D(V_y),A])^T |u_{{\cal X}}\rangle= D(C_y)|u_{{\cal X}}\rangle$.
Since $A^T| u_{{\cal X}} \rangle=0 $, this condition is equivalent to 
$ A^T V_y=C_y$.
Hence, the desired statement.
\end{proof}

Using Corollary \ref{FET}, we have the following corollary.
\begin{corollary}\Label{VDT}
We assume that all the vectors $V_{*,x}$ are different
and the relations $ \Ker P^{k_{(W,V)}}[(W,V)]=\{0\}$ and ${\cal V}^{k_{(P,\vec{W})}}(P)
={\cal V}_{{\cal X}}$.
Then,
$ {\cal L}_{2,W,V}^I=  {\cal L}_{2,P,W,V}^I={\cal L}_{P,W,V}^I=\{0\}$.
\end{corollary}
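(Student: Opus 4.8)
The plan is to treat the three indistinguishable subspaces separately, relying on the structural descriptions already at hand: Corollary~\ref{FET} for ${\cal L}_{2,W,V}^I$ and ${\cal L}_{2,P,W,V}^I$, and the explicit description of ${\cal L}_{P,W,V}^I$ through Conditions~\eqref{C1} and~\eqref{C2}. The one elementary fact driving everything is that, since the vectors $V_{*,x}$ are pairwise distinct, $S(V)_x=\{x\}$ for every $x\in{\cal X}$, so the projection $I_{S(V)_x}$ equals the diagonal matrix $E_{xx}$ with single nonzero entry $1$ in position $(x,x)$.

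First I would dispose of ${\cal L}_{2,W,V}^I$ and ${\cal L}_{2,P,W,V}^I$. By Corollary~\ref{FET}, every element of ${\cal L}_{2,W,V}^I$ has the form $([W,A],0)$ with $A^T|u_{{\cal X}}\rangle=0$ and $[I_{S(V)_x},A]=0$ for all $x$. Since $I_{S(V)_x}=E_{xx}$, the relations $[E_{xx},A]=0$ $(x\in{\cal X})$ force $A$ to be diagonal (for $x'\ne x$ the $(x,x')$-entry of $[E_{xx},A]$ is $A_{xx'}$), say $A=D(a)$; then $A^T|u_{{\cal X}}\rangle=0$ gives $a=0$, hence $A=0$ and $[W,A]=0$. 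Thus ${\cal L}_{2,W,V}^I=\{0\}$; and since by the second formula of Corollary~\ref{FET} the space ${\cal L}_{2,P,W,V}^I$ is obtained from ${\cal L}_{2,W,V}^I$ by imposing the extra condition $A|P\rangle=0$, it is a subspace of ${\cal L}_{2,W,V}^I$ and hence also $\{0\}$.

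It remains to show ${\cal L}_{P,W,V}^I=\{0\}$. Take $(B,C)\in{\cal L}_{P,W,V}^I$. Under the hypotheses $\Ker P^{k_{(W,V)}}[(W,V)]=\{0\}$ and ${\cal V}^{k_{(P,(W,V))}}(P)={\cal V}_{{\cal X}}$, the subspace on the left of Condition~\eqref{C2} is all of ${\cal V}_{{\cal X}}$ while its right-hand side is $\{0\}$, so~\eqref{C2} collapses to $WD(C_y)+BD(V_y)=0$ for every $y\in{\cal Y}$. Multiplying each identity on the left by $\langle u_{{\cal X}}|$ and using $\langle u_{{\cal X}}|W=\langle u_{{\cal X}}|$ together with Condition~\eqref{C1} (which gives $\langle u_{{\cal X}}|B=0$) yields $\langle u_{{\cal X}}|D(C_y)=0$, i.e.\ $C_y=0$ for every $y$; substituting back gives $BD(V_y)=0$ for every $y$, and summing over $y$ with $\sum_{y}D(V_y)=I$ gives $B=0$. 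Hence $(B,C)=(0,0)$, so ${\cal L}_{P,W,V}^I=\{0\}$, completing the proof.

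The computations are all short; the two hypotheses on $\vec{W}$ enter only in the collapse of~\eqref{C2}, while the distinctness of the $V_{*,x}$ is used only to degenerate the block structure of Corollary~\ref{FET} to plain diagonality, so the only thing to be careful about is matching each earlier characterization to the correct subspace.
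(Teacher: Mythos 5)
Your proposal is correct and follows essentially the same route as the paper: reduce $S(V)_x$ to $\{x\}$ so that Corollary \ref{FET} forces $A=0$ and hence ${\cal L}_{2,W,V}^I={\cal L}_{2,P,W,V}^I=\{0\}$, and then use the two kernel/spanning hypotheses to kill ${\cal L}_{P,W,V}^I$. The only difference is that you spell out the last step (collapsing Condition \eqref{C2} to $WD(C_y)+BD(V_y)=0$ and deducing $C_y=0$, $B=0$), which the paper states in one sentence without computation; your filled-in argument is valid.
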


\begin{proof}
Since all the vectors $V_{*,x}$ are different,
we have $S(V)_x=\{x\}$.
In this case, the condition
$A^T| u_{{\cal X}} \rangle=0,~[I_{S(V)_x},A]=0$
implies that $A=0$.
Due to Corollary \ref{FET},
this assumption guarantees that 
${\cal L}_{2,W,V}^I$
and ${\cal L}_{2,P,W,V}^I$
are $\{0\}$, i.e.,
$ \dim {\cal L}_{2,W,V}^I= \dim {\cal L}_{2,P,W,V}^I=0$.
The relations $ \Ker P^{k_{(W,V)}}[(W,V)]=\{0\}$ and ${\cal V}^{k_{(P,\vec{W})}}(P)
={\cal V}_{{\cal X}}$ imply the relation 
$\dim {\cal L}_{P,W,V}^I=0$.
\hfill$\Box$\end{proof}

Theorem \ref{VDT} means that 
the indistinguishable subspaces 
$ {\cal L}_{2,W,V}^I,{\cal L}_{2,P,W,V}^I$, and ${\cal L}_{P,W,V}^I$
vanish in a usual case.
More precisely, since all the vectors $V_{*,x}$ are different almost everywhere with respect to $V$,
Lemma \ref{2-15-B} and Theorem \ref{VDT} guarantee 
the relation $ \dim {\cal L}_{2,W,V}^I= \dim {\cal L}_{2,P,W,V}^I=\dim {\cal L}_{P,W,V}^I=0$
almost everywhere with respect to $W,V$.
Hence, Theorem \ref{VDT} guarantees that 
Example \ref{ERT} has dimension $ d^2\cdot (d_Y-1) $ except for a measure-zero set. 
Remember that, as shown in Lemma \ref{LKD}, 
Example \ref{ERT} characterizes all of the independent-type ${\cal Y}$-indexed transition matrices with full support.
Hence, we call an element of such a measure-zero set 
{\it an independent-type singular point}. 
Since the full model without the independent-type condition 
has the dimension $ d^2\cdot (d_Y-1) $ at non-singular points,
we have the inequality $d(d+d_Y-2)\le d^2\cdot (d_Y-1)$,
and the equality holds only when $d_Y=2 $.
Hence, we can expect that the condition of Lemma \ref{HGR} holds with non-zero measure with respect to $V,W$
when $d_Y=2 $. 

\begin{remark}\Label{R2}
Here, we compare the notations of
${\cal Y}$-indexed transition matrix $\vec{W}$ (Fig. \ref{3model}), 
independent-type ${\cal Y}$-indexed transition matrix $(W,V)$ (Fig. \ref{2model}), 
and
independent-type ${\cal Y}$-indexed transition matrix $(W,f)$ (Fig. \ref{1model}),
where the third case express the transition matrix $V$ is given as a deterministic function $f$.
Although $P^k[\vec{W}](y_k, \ldots, y_1|x')$ is given as \eqref{LEV} in the first case,
it is written as \eqref{LEV2} in the second case. 
In the third case, it is described as
$\sum_{x\in {\cal X}} I_{f^{-1}(y_k)} W \cdots W I_{f^{-1}(y_1)}(x|x')$.
Clearly, the description \eqref{LEV} of the first case is shortest.

Indeed, due to this simplicity, 
we can easily find that an exponential family of
${\cal Y}$-indexed transition matrices
is a special case of 
an exponential family of transition matrices.
It is not so easy to find that 
an exponential family of
independent-type ${\cal Y}$-indexed transition matrices
is a special case of 
an exponential family of transition matrices
without considering the relation between 
independent-type ${\cal Y}$-indexed transition matrix
and ${\cal Y}$-indexed transition matrix.
These comparisons express the merit of notation of  
${\cal Y}$-indexed transition matrix $\vec{W}$ (Fig. \ref{3model}).
\end{remark}


\section{Two-hidden-state case in conditionally independent model}\Label{s-Fi}
We consider the case with $d=2$.
In this case, since
the subspace ${\cal N}^I(({\cal X}^{2}\cup {\cal Y}\times {\cal X})_{(W,V)})$
equals the subspace ${\cal N}(({\cal Y}\times {\cal X}^{2})_{\vec{W}}) $, 
it is given by \eqref{eq2-13-9B}.

\subsection{Non-singular points}
First, we assume that $W$ is invertible and the relation
\begin{align}
V(y|0)=V(y|1) \hbox{ for }\forall y \in {\cal Y}\Label{SJT}
\end{align}
does not hold.
In this case,
$\Ker P^{k_{(W,V)}}[(W,V)] =\{0\}$ and ${\cal V}^{k_{(P,(W,V))}}(P) = {\cal V}_{{\cal X}}$.
Hence,
${\cal L}_{P,W,V}^I=\{0\}$.
Corollary \ref{VDT} guarantees that 
$ {\cal L}_{2,W,V}^I=  {\cal L}_{2,P,W,V}^I={\cal L}_{P,W,V}^I=\{0\}$.
Hence, the above condition guarantees that the point is non-singular.
Then, $l'=d_Y-1$ and $l=2d_Y$.
The generators are chosen as in Example \ref{ERT}.

\subsection{Singular points}
The subset of singular elements equals the set of non-memory cases, which has two cases.
As the three cases, we assume that 
the relation \eqref{SJT} does not hold and
$W$ is not invertible, i.e., $W=\left(
\begin{array}{cc}
\frac{1}{2} & \frac{1}{2} \\
\frac{1}{2} & \frac{1}{2} 
\end{array}
\right)$.
In this case,
$\Ker P^{k_{(W,V)}}[(W,V)] =\{0\}$ and ${\cal V}^{k_{(P,(W,V))}}(P) = {\cal V}_{{\cal X}}$.
Hence,
${\cal L}_{P,W,V}^I=\{0\}$.
The dimensions of
$ {\cal L}_{2,W,V}^I$ and $  {\cal L}_{2,P,W,V}^I$
are given by the dimensions of $A$ satisfying the constraint given in Corollary \ref{FETE}.
Hence, $\dim {\cal L}_{2,W,V}^I=2$.
When $P$ is the stationary distribution, i.e., the uniform distribution, $ \dim {\cal L}_{2,P,W,V}^I=2$.
Otherwise, it  is $1$.
The dimension of the quotient space 
${\cal G}_1(({\cal X}^{2}\cup {\cal Y}\times {\cal X})_{(W,V)})
/({\cal N}_2^I(({\cal X}^{2}\cup {\cal Y}\times {\cal X})_{(W,V)})+
{\cal N}_{P_W}^I(({\cal X}^{2}\cup {\cal Y}\times {\cal X})_{(W,V)}))$
is $2d_Y-2$.
In this case, the initial $l'=d_Y-1$ generators given in \eqref{FR1} and \eqref{FR2} of Example \ref{ERT} express the variation inside of the set of
singular points.
The remaining $d_Y-1$ generators express the difference from this set of singular point.
When $P$ is the  stationary distribution,
the quotient space 
${\cal G}_1(({\cal X}^{2}\cup {\cal Y}\times {\cal X})_{(W,V)})
/({\cal N}_{2,P}^I(({\cal X}^{2}\cup {\cal Y}\times {\cal X})_{(W,V)})+
{\cal N}_{P}^I(({\cal X}^{2}\cup {\cal Y}\times {\cal X})_{(W,V)}))$
has the same structure as the above case.
When $P$ is not the  stationary distribution,
the quotient space 
${\cal G}_1(({\cal X}^{2}\cup {\cal Y}\times {\cal X})_{(W,V)})
/({\cal N}_{2,P}^I(({\cal X}^{2}\cup {\cal Y}\times {\cal X})_{(W,V)})+
{\cal N}_{P}^I(({\cal X}^{2}\cup {\cal Y}\times {\cal X})_{(W,V)}))$
has dimension $2d_Y-1$.
The same initial $d_Y-1$ generators express the variation inside of the set of
singular points, and the remaining $d_Y$ generators express the difference from this set of singular point.

As the second case, we consider the case with the relation \eqref{SJT} with invertible $W$.
Choosing $v_1:= e_1+e_2$ and $v_2:= e_1-e_2$,
we have 
$\Ker P^{k_{(W,V)}}[(W,V)] =<v_2>$ and ${\cal V}^{k_{(P,(W,V))}}(P) = {\cal V}_{{\cal X}}$.
Hence, $(B,C)\in  {\cal L}_{P,W,V}^I$ if and only if
the matrix $WD(C_y)+B D(V_y)$ has the form
$\left(
\begin{array}{ll}
a & b \\
-a & -b
\end{array}
\right)$.
Also, the matrix $B$ has the form
$\left(
\begin{array}{ll}
b_1 & b_2 \\
-b_1 & -b_2
\end{array}
\right)$.
$D(V_y)$ is a scalar times of the identity matrix. 
When $C_y=
\left(
\begin{array}{ll}
c\\
-c
\end{array}
\right)$,
the matrix $WD(C_y)$ has the form
$\left(
\begin{array}{ll}
c W(0|0) & -c W(0|1) \\
c W(1|0) & -c W(1|1) 
\end{array}
\right)$.
Since $W(x|x')\ge 0$, 
the real number $c$ needs to be $0$.
Hence, we find that
\begin{align}
 {\cal L}_{P,W,V}^I
=\left\{
\left(
\left(
\begin{array}{ll}
b_1 & b_2 \\
-b_1 & -b_2
\end{array}
\right), 0
\right)
\right\}.
 \end{align}

Hence, $ {\cal L}_{P,W,V}^I=\{(B,0)| B^T| u_{{\cal X}} \rangle=0 \}$
because the space $\{(B,0)| B^T| u_{{\cal X}} \rangle=0 \}$ has dimension $2$.
Due to the RHSs of \eqref{YF1} and \eqref{YF2},
we have ${\cal L}_{2,W,V}^I,{\cal L}_{2,P,W,V}^I
\subset \{(B,0)| B^T| u_{{\cal X}} \rangle=0 \}={\cal L}_{P,W,V}^I$.
The dimension of the quotient space 
${\cal G}_1(({\cal X}^{2}\cup {\cal Y}\times {\cal X})_{(W,V)})
/({\cal N}_2^I(({\cal X}^{2}\cup {\cal Y}\times {\cal X})_{(W,V)})+
{\cal N}_{P_W}^I(({\cal X}^{2}\cup {\cal Y}\times {\cal X})_{(W,V)}))$
is $2d_Y-2$.
The initial $d_Y-1$ generators given in \eqref{FR1} and \eqref{FR2} of Example \ref{ERT} express the variation inside of the set of
singular points.
The remaining $d_Y-1$ generators in \eqref{FR3} and \eqref{FR4} of Example \ref{ERT}
express the difference from this set of singular point.
The quotient space 
${\cal G}_1(({\cal X}^{2}\cup {\cal Y}\times {\cal X})_{(W,V)})
/({\cal N}_{2,P}^I(({\cal X}^{2}\cup {\cal Y}\times {\cal X})_{(W,V)})+
{\cal N}_{P}^I(({\cal X}^{2}\cup {\cal Y}\times {\cal X})_{(W,V)}))$
has the same structure as the above case, regardless of $P$.

As the third case, we consider the case with the relation \eqref{SJT} and
$W=\left(
\begin{array}{cc}
\frac{1}{2} & \frac{1}{2} \\
\frac{1}{2} & \frac{1}{2} 
\end{array}
\right)$.
Then, we have 
$\Ker P^{k_{(W,V)}}[(W,V)] =<v_2>$ and ${\cal V}^{k_{(P,(W,V))}}(P) = <v_1>$.
Hence, 
$(B,C)\in  {\cal L}_{P,W,V}^I$ if and only if
the matrix $WD(C_y)+B D(V_y)$ has the form
$\left(
\begin{array}{cc}
a & b \\
-a & -b
\end{array}
\right)$.
Hence, $ {\cal L}_{P,W,V}^I=\{(B,0)| B^T| u_{{\cal X}} \rangle=0 \}$.
In this case, $V_y$ is a scalar times of 
$
\left(
\begin{array}{c}
1\\
1
\end{array}
\right)
$. Hence, Corollary \ref{FETE} guarantees that
${\cal L}_{2,W,V}^I,{\cal L}_{2,P,W,V}^I
\subset \{(B,0)| B^T| u_{{\cal X}} \rangle=0 \}={\cal L}_{P,W,V}^I$.
Hence, the remaining discussion is the same as the second case.

\subsection{Parametrization}
In this subsection, we employ the parametrization given in Example \ref{ERT}.
\subsubsection{Case with $d_Y=2$}
When $d_Y=2$, we choose $W$ and $V$ as
\begin{align}
W=V=
\left(
\begin{array}{cc}
\frac{1}{2} & \frac{1}{2} \\
\frac{1}{2} & \frac{1}{2} 
\end{array}
\right).
\end{align}
The subset of singular elements equals the set of non-memory cases, 
which can be characterized as $\theta_2=0$ or $\theta_3=\theta_4=0$.
In the former case, the parameters $\theta_3$ and $\theta_4$ are redundant parameters,
and 
in the latter case, the parameter $\theta_2$ is a redundant parameter.
Both cases are equivalent to the binomial distribution. 
Hence, the set of non-singular elements are given as
$\bR \times (\bR\setminus \{0\})\times (\bR^2\setminus \{(0,0)\})$,
which can be divided into two connected components
$\bR \times (0,\infty)\times (\bR^2\setminus \{(0,0)\})$ 
and
$\bR \times (-\infty,0)\times (\bR^2\setminus \{(0,0)\})$.
Each connected component has a one-to-one correspondence to 
non-singular elements divided by the equivalence class.

\subsubsection{Case with $d_Y\ge 3$}
When $d_Y\ge 3$, we choose $W$ and $V$ as
\begin{align}
W=
\left(
\begin{array}{cc}
\frac{1}{2} & \frac{1}{2} \\
\frac{1}{2} & \frac{1}{2} 
\end{array}
\right),\quad
V=
\left(
\begin{array}{cc}
\frac{1}{d_Y} & \frac{1}{d_Y} \\
\frac{1}{d_Y} & \frac{1}{d_Y} \\
\vdots & \vdots \\
\frac{1}{d_Y} & \frac{1}{d_Y} 
\end{array}
\right).
\end{align}
The subset of singular elements equals the set of non-memory cases, 
which can be characterized as $\theta_{d_Y}=\cdots =\theta_{2d_y-2}=0$ or 
$\theta_{2d_Y-1}=\theta_{2d_Y}=0$.
In the former case, the parameters $\theta_{2d_Y-1}$ and $\theta_{2d_Y}$ are redundant parameters,
and 
in the latter case, the parameters $\theta_{d_Y},\cdots ,\theta_{2d_y-2}$ are redundant parameters.
We denote the set 
$(\bR^{d_Y-1} \times \{(0,\ldots,0)\}\times \bR^{2})
\cup (\bR^{2d_Y-2} \times \{(0,0)\})$ by $\Theta_{NM}$.
That is, the set $\bR^{2 d_Y} \setminus \Theta_{NM}
=\bR^{d_Y-1}\times (\bR^{d_Y-1}\setminus \{(0,\ldots,0)\})
\times (\bR^{2}\setminus \{(0,0)\})$ 
equals to the set of non-singular elements.
However, it is impossible to divide the set $\bR^{2 d_Y} \setminus \Theta_{NM}$
into components satisfying the following conditions.
(1) Each component is an open set.
(2) Each component gives a one-to-one parametrization for non-singular elements.
This is because the set $\bR^{2 d_Y} \setminus \Theta_{NM}$ is connected.
Hence, we need to adopt duplicated parametrization when the parametric space is needed to be open.

\section{Conclusion}\Label{s9}
In Section \ref{s5}, we have introduced the concept of ${\cal Y}$-indexed transition matrix 
to describe a hidden Markov process, which is a more general formulation
than the conventional formulation for a hidden Markov process.
In fact, as explained in Remark \ref{R2}, this notion is more useful to describe the equivalence problem. 
Then, in Section \ref{s6}, we have formulated an exponential family of 
${\cal Y}$-indexed transition matrices as a special case of 
an exponential family of transition matrices.
In this definition, the generators are given as functions of hidden and observed states. 
Then, we have introduced an equivalence relation for generators, 
which is equivalent to the distinguishability of infinitesimal changes
based on the observed data (See Theorem \ref{27-3}).
In Section \ref{s6-3}, based on this equivalence relation, we have proposed 
a method to choose the parametrization of the transition matrix to describe the hidden Markov process. 
In this parametrization, we have shown that
only in a measure-zero point, the number of independent generators is smaller than other points.
We define singular points as such measure-zero points.

In addition, in Section \ref{s7}, we have clarified the structure of the tangent space of all points including singular points
when the number of hidden state is $2$.
Next, In Section \ref{s8}, we have applied obtained results to the conventional case, which is called independent-type and is characterized by a pair of transition matrices.
We have derived a necessary and sufficient condition for being independent-type.
Also, we have clarified the forms of an exponential family of 
${\cal Y}$-indexed transition matrices and 
the local equivalence condition in this case.
Based on this equivalence relation, we have proposed 
a method to choose the parametrization of the transition matrix 
for this case.

We have several open problems as follows.
First, while we have shown that the dimension of non-singular points in the independent-type case is the same 
as that in the case of general ${\cal Y}$-indexed transition matrices when 
the number of observed states is $2$,
we could not clarify whether there exists a general ${\cal Y}$-indexed transition matrix that cannot 
be reduced to an independent-type one in this case.
This is a future problem.
Another remaining problem is a characterization of the tangent space 
when the transition matrix $V$ is given as a deterministic function $f$.
In this case, 
due to Corollary \ref{FET}, the indistinguishable subspaces 
$ {\cal L}_{2,W,V}^I$ and $ {\cal L}_{2,P,W,V}^I$, and ${\cal L}_{P,W,V}^I$
do not vanish.
Hence, the structure of the tangent space is complicated.
The determination of the parametrization of this case with taking the local equivalence into account
is another future problem.

\section*{Acknowledgment}
The author is very grateful to 
Professor Takafumi Kanamori, Professor Vincent Y. F. Tan, and Dr. Wataru Kumagai
for helpful discussions and comments.
The works reported here were supported in part by 
the JSPS Grant-in-Aid for Scientific Research 
(B) No. 16KT0017, (A) No.17H01280, 
the Okawa Research Grant
and Kayamori Foundation of Informational Science Advancement.

\section*{Conflict of interest}
On behalf of all authors, the corresponding author states that there is no conflict of interest. 

\appendix

\section{Proof of Theorem \ref{L27-2}}\Label{A1}
It is enough to discuss the one-parameter case.
Since ${\bf (B2)} \Rightarrow {\bf (B3)}$ is trivial, we will show only 
${\bf (B1)} \Rightarrow {\bf (B2)}$ and ${\bf (B3)} \Rightarrow {\bf (B1)}$.

\PF{${\bf (B1)} \Rightarrow {\bf (B2)}$}
Assume {\bf (B1)}.
There exist $A \in {\cal M}({\cal V}_{\cal X})$ 
and $(B_{y})_{y\in {\cal Y}} \in {\cal L}_{2,\vec{W}}$
such that
$A|P\rangle =0$, $A^T|u_{{\cal X}}\rangle=0 $,
$B_y ({\cal V}^{k_{(P,\vec{W})}}(P) +\Ker P^{k_{\vec{W}}}[\vec{W}] )
\subset \Ker P^{k_{\vec{W}}}[\vec{W}]$,
$(\sum_y B_y)^T |u_{{\cal X}}\rangle=0 $,
and 
$\frac{d }{d \theta} W_{\theta,y}|_{{\theta}=0}=B_y+[W_y,A]$
for any $y \in {\cal Y}$.
Then,
\begin{align}
& \frac{d }{d \theta}
P^{k}[\vec{W}_{\vec{\theta}}]\cdot P (y_1, \ldots, y_k)|_{{\theta}=0}
=\frac{d }{d \theta}
(\langle u_{{\cal X}}| W_{\theta,y_k}W_{\theta,y_{k-1}} \ldots W_{\theta,y_1} |P\rangle)|_{{\theta}=0} \nonumber\\
=&
\langle u_{{\cal X}}| 
(\frac{d }{d \theta} W_{\theta,y_k}|_{{\theta}=0})
W_{y_{k-1}} \ldots W_{y_1} |P\rangle
+
\langle u_{{\cal X}}| 
W_{y_k}
(\frac{d }{d \theta} W_{\theta,y_{k-1}}|_{{\theta}=0}) 
\ldots W_{y_1} |P\rangle \nonumber \\
&+ \cdots +
\langle u_{{\cal X}}| 
W_{y_k}
W_{y_{k-1}} \ldots (\frac{d }{d \theta} W_{\theta,y_1}|_{{\theta}=0}) 
 |P\rangle \\
=&
\langle u_{{\cal X}}| 
B_{y_k}
W_{y_{k-1}} \ldots W_{y_1}  |P\rangle
+
\langle u_{{\cal X}}| 
W_{y_k}
B_{y_{k-1}}
\ldots W_{y_1}  |P\rangle\nonumber \\
&+ \cdots +
\langle u_{{\cal X}}| 
W_{y_k}
W_{y_{k-1}} \ldots 
B_{y_1} |P\rangle \nonumber\\
&+
\langle u_{{\cal X}}| 
[W_{y_k},A]
W_{y_{k-1}} \ldots W_{y_1}  |P\rangle
+
\langle u_{{\cal X}}| 
W_{y_k}
[W_{y_{k-1}},A]
\ldots W_{y_1}  |P\rangle
\nonumber \\
&+ \cdots +
\langle u_{{\cal X}}| 
W_{y_k}
W_{y_{k-1}} \ldots 
[W_{y_1},A]
 |P\rangle \\
\stackrel{(a)}{=}&
-\langle u_{{\cal X}}| 
A W_{y_k} W_{y_{k-1}} \ldots W_{y_1}  |P\rangle
+\langle u_{{\cal X}}| W_{y_k} W_{y_{k-1}} \ldots 
W_{y_1}A |P\rangle 
\stackrel{(b)}{=}
0,\Label{9-27-3}
\end{align}
where $(a)$ follows from the fact that the image of $B_y$ is included in 
$\Ker P^{k_{\vec{W}}}[\vec{W}]$,
and $(b)$ follows from the properties of $A$.
So, we obtain {\bf (B2)}.

\PF{${\bf (B3)} \Rightarrow {\bf (B1)}$}
Assume {\bf (B3)}.
We define 
$\vec{W}_{\theta,y}':= 
\vec{W}_{y}+ \theta  
\frac{d}{d\theta}\vec{W}_{\theta,y}|_{\theta=0}$.
So, we have
$P^{k_{\vec{W}}+k_{(P,\vec{W})}+1}[\vec{W}_{{\theta}}']\cdot P 
=P^{k_{\vec{W}}+k_{(P,\vec{W})}+1}[\vec{W}]\cdot P $
and
\begin{align}
\lim_{\theta \to 0} \frac{\vec{W}_{\theta,y}-\vec{W}_{\theta',y}}{\theta}=0
\Label{9-27-1}.
\end{align}
Theorem \ref{L-9-27} guarantees that the pair of $\vec{W}$ and $P$ is equivalent to 
the pair of $\vec{W}_\theta'$ and $P$.
Thus, Theorem \ref{L-9-27} guarantees that
there exist an invertible map $T_\theta$ on ${\cal V}_{{\cal X}}$
and an element $(B_{\theta,y})_{y\in {\cal Y}} \in {\cal L}_{2,\vec{W}}$
such that $T_\theta P=P$,
$B_{\theta,y} ({\cal V}^{k_{(P,\vec{W})}}(P) +\Ker P^{k_{\vec{W}}}[\vec{W}] )
\subset \Ker P^{k_{\vec{W}}}[\vec{W}]$ and 
$\vec{W}_{\theta ,y}'= T_\theta^{-1}(W_y+B_{\theta,y})T_\theta$.

Now, taking the derivative at $\theta=0$, we have
$\frac{d}{d\theta}\vec{W}_{\theta ,y}'|_{\theta=0}=[W_y,A]+B_y$,
where 
$A:=\frac{d}{d\theta}T_\theta|_{\theta=0}$
and $B_y:=\frac{d}{d\theta}B_{\theta,y}|_{\theta=0}$.
The condition $T_\theta P=P$ implies that
\begin{align}
A|P\rangle=0.\Label{27-1}
\end{align}
Using \eqref{9-27-1}, we have
\begin{align}
\frac{d}{d\theta}\vec{W}_{\theta ,y}|_{\theta=0}
=[W_y,A]+B_y
\Label{9-27-2}.
\end{align}

Since the relation $(\vec{W}_{\theta ,y})^T|u_{{\cal X}}\rangle=|u_{{\cal X}}\rangle$
implies $(\sum_y \frac{d}{d\theta}\vec{W}_{\theta ,y}|_{\theta=0})^T|u_{{\cal X}}\rangle=0$,
$(\sum_y [W_y,A]+B_y)^T |u_{{\cal X}}\rangle=0$.
Since $B_y^T |u_{{\cal X}}\rangle=0$, 
we have $ ([\sum_y W_y,A])^T |u_{{\cal X}}\rangle=0$.
So, $(\sum_y W_y)^T A^T |u_{{\cal X}}\rangle
= A^T(\sum_y W_y)^T  |u_{{\cal X}}\rangle= A^T  |u_{{\cal X}}\rangle$.
That is, $A^T  |u_{{\cal X}}\rangle$ is an eigenvector of $(\sum_y W_y)^T$ 
with eigenvalue $1$.
So, $A^T  |u_{{\cal X}}\rangle$ is written as $c  |u_{{\cal X}}\rangle$ with a constant $c$, i.e.,
\begin{align}
A^T  |u_{{\cal X}}\rangle=c  |u_{{\cal X}}\rangle
\Label{27-2}.
\end{align}
Now, we calculate 
$ \frac{d }{d \theta}
P^{k}[\vec{W}_{\vec{\theta}}]\cdot P (y_1, \ldots, y_k)|_{{\theta}=0}$
by using the same discussion as \eqref{9-27-3}.
So, we have
\begin{align}
& \frac{d }{d \theta}
P^{k}[\vec{W}_{\vec{\theta}}]\cdot P (y_1, \ldots, y_k)|_{{\theta}=0}
\nonumber \\
= &
-\langle u_{{\cal X}}| 
A W_{y_k} W_{y_{k-1}} \ldots W_{y_1}  |P\rangle
+
\langle u_{{\cal X}}| 
W_{y_k}
W_{y_{k-1}} \ldots 
W_{y_1}A |P\rangle 
\nonumber \\
\stackrel{(a)}{=} &
-c 
\langle u_{{\cal X}}| 
 W_{y_k} W_{y_{k-1}} \ldots W_{y_1}  |P\rangle,
\end{align}
where $(a)$ follows from \eqref{27-1} and \eqref{27-2}.
Since 
$\langle u_{{\cal X}}| 
 W_{y_k} W_{y_{k-1}} \ldots W_{y_1}  |P\rangle >0$ 
and the LHS is zero, we have $c=0$.
Thus, we obtain {\bf (B1)}.

\section{Proof of Theorem \ref{27-3}}\Label{A2}
It is enough to discuss the one-parameter case.
Since ${\bf (C2)} \Rightarrow {\bf (C3)}$ is trivial, we will show only 
${\bf (C1)} \Rightarrow {\bf (C2)}$ and ${\bf (C3)} \Rightarrow {\bf (C1)}$.

\PF{${\bf (C1)} \Rightarrow {\bf (C2)}$}
Assume {\bf (C1)}.
There exist a real number $c\in \mathbb{R}$,
$A \in {\cal M}({\cal V}_{\cal X})$, 
and $(B_{y})_{y\in {\cal Y}} \in {\cal L}_{2,\vec{W}}$
such that
\begin{align}
A^T|u_{{\cal X}}\rangle & =c |u_{{\cal X}}\rangle ,\Label{27-a} \\
B_y ({\cal V}^{k_{(P,\vec{W})}}(P) +\Ker P^{k_{\vec{W}}}[\vec{W}] )
&\subset \Ker P^{k_{\vec{W}}}[\vec{W}], \Label{27-b} \\
(\sum_y B_y)^T |u_{{\cal X}}\rangle &=0 \Label{27-c} \\
\frac{d }{d \theta} W_{\theta,y}|_{{\theta}=0} &=B_y+[W_y,A]
\Label{27-d} 
\end{align}
for any $y \in {\cal Y}$.
Define the vector $Q:= \frac{d}{d\theta} P_{\vec{W}_{\theta}} |_{{\theta}=0}$.
Since 
\begin{align}
(\sum_{y \in {\cal Y}} W_{\theta,y}) | P_{\vec{W}_{\theta}} \rangle
=| P_{\vec{W}_{\theta}} \rangle
\Label{28-1},
\end{align}
we have
\begin{align}
&(\sum_{y \in {\cal Y}} B_y) | P_{\vec{W}} \rangle
+(\sum_{y \in {\cal Y}} W_y)A | P_{\vec{W}} \rangle
-A | P_{\vec{W}} \rangle
+ (\sum_{y \in {\cal Y}} W_{y}) | Q \rangle
\nonumber \\
\stackrel{(a)}{=} &
\Big( \Big(\sum_{y \in {\cal Y}} B_y\Big)
+\Big( \Big(\sum_{y \in {\cal Y}} W_y\Big)A 
-A\Big(\sum_{y \in {\cal Y}} W_y\Big)\Big)
\Big) | P_{\vec{W}} \rangle
+ \Big(\sum_{y \in {\cal Y}} W_{y}\Big) | Q \rangle
\nonumber \\
=&
\Big(\sum_{y \in {\cal Y}} B_y+[W_y,A]\Big) | P_{\vec{W}} \rangle
+ \Big(\sum_{y \in {\cal Y}} W_{y}\Big) | Q \rangle
\nonumber \\
\stackrel{(b)}{=}&| Q \rangle,
\end{align}
where $(a)$ and $(b)$ follow from \eqref{28-1} and its derivative, respectively.

That is,
\begin{align}
\Big(\sum_{y \in {\cal Y}} W_y\Big)
\Big(| Q \rangle + A |P_{\vec{W}} \rangle\Big)
=
\Big(| Q \rangle + A |P_{\vec{W}} \rangle\Big)
-\Big(\sum_{y \in {\cal Y}} B_y\Big) | P_{\vec{W}} \rangle.
\end{align}
Since 
$\Big(\sum_{y \in {\cal Y}} W_y\Big) \Big(\sum_{y \in {\cal Y}} B_y\Big) | P_{\vec{W}} \rangle
=0$,
we have
\begin{align}
\bigg(\sum_{y \in {\cal Y}} W_y\bigg)
\Bigg(| Q \rangle + A |P_{\vec{W}} \rangle
-\bigg(\sum_{y \in {\cal Y}} B_y\bigg) | P_{\vec{W}} \rangle \Bigg)
=
\Bigg(| Q \rangle + A |P_{\vec{W}} \rangle
-\bigg(\sum_{y \in {\cal Y}} B_y\bigg) | P_{\vec{W}} \rangle \Bigg).
\end{align}
Due to the uniqueness of the eigenvector of 
$\bigg(\sum_{y \in {\cal Y}} W_y\bigg)$ with eigenvalue $1$,
we have
\begin{align}
| Q \rangle + A |P_{\vec{W}} \rangle
-\bigg(\sum_{y \in {\cal Y}} B_y\bigg) | P_{\vec{W}} \rangle 
= c' |P_{\vec{W}} \rangle \Label{27-e} 
\end{align}
with a constant $c' \in \mathbb{R}$.

Since 
\begin{align}
\sum_{y \in {\cal Y}} \langle u_{{\cal X}}| W_{\theta,y}|P_{\vec{W}_{\theta}}
\rangle
=1,
\Label{27-g}
\end{align}
we have
\begin{align}
& c '-c
\nonumber \\
=& 
c '\langle u_{{\cal X}}| \bigg(\sum_{y \in {\cal Y}} W_{y}\bigg)  |P_{\vec{W}} \rangle
-c\langle u_{{\cal X}} |P_{\vec{W}} \rangle
\nonumber \\
\stackrel{(a)}{=}& 
c '\langle u_{{\cal X}}| \bigg(\sum_{y \in {\cal Y}} W_{y}\bigg)  |P_{\vec{W}} \rangle
-\langle u_{{\cal X}}| A |P_{\vec{W}} \rangle
\nonumber \\
=& \langle u_{{\cal X}}| \bigg(\sum_{y \in {\cal Y}} W_{y}\bigg) 
\Bigg( c' |P_{\vec{W}} \rangle
-A |P_{\vec{W}} \rangle
+\bigg(\sum_{y \in {\cal Y}} B_y\bigg) | P_{\vec{W}} \rangle \Bigg)
\nonumber \\
\stackrel{(b)}{=}& 
\langle u_{{\cal X}}| \bigg(\sum_{y \in {\cal Y}} W_{y}\bigg) |Q\rangle
\nonumber \\
=& \langle u_{{\cal X}}| A|P_{\vec{W}} \rangle
-\langle u_{{\cal X}}| A|P_{\vec{W}} \rangle
+
\langle u_{{\cal X}}| \bigg(\sum_{y \in {\cal Y}} W_{y}\bigg) |Q\rangle
\nonumber \\
=&  \langle u_{{\cal X}}| \Bigg(\bigg(\sum_{y \in {\cal Y}} W_y\bigg)A -
A \bigg(\sum_{y \in {\cal Y}} W_y\bigg) \Bigg) |P_{\vec{W}} \rangle
+
\langle u_{{\cal X}}| \sum_{y \in {\cal Y}} W_{y} |Q\rangle
\nonumber \\
=& \langle u_{{\cal X}}| \sum_{y \in {\cal Y}} B_y+[W_y,A] |P_{\vec{W}} \rangle
+
\langle u_{{\cal X}}| \sum_{y \in {\cal Y}} W_{y} |Q\rangle
\nonumber \\
\stackrel{(c)}{=}& 
\langle u_{{\cal X}}| \sum_{y \in {\cal Y}} \frac{d}{d\theta}W_{\theta,y}|_{{\theta}=0}|P_{\vec{W}} \rangle
+
\langle u_{{\cal X}}| \sum_{y \in {\cal Y}} W_{y}
\Big|\frac{d}{d\theta}
P_{\vec{W}_{\theta}} 
|_{{\theta}=0}\Big\rangle
\nonumber \\
\stackrel{(d)}{=}&0 ,\Label{27-f} 
\end{align}
where $(a)$, $(b)$, $(c)$, and $(d)$ follow from \eqref{27-a}, \eqref{27-e}, \eqref{27-d}, and \eqref{27-g}, respectively.

Similar to \eqref{9-27-3}, we have
\begin{align}
& \frac{d }{d \theta}
P^{k}[\vec{W}_{\vec{\theta}}]\cdot P_{\vec{W}_\theta} (y_1, \ldots, y_k)|_{{\theta}=0}
\nonumber \\
\stackrel{(a)}{=}
 &
-\langle u_{{\cal X}}| 
A W_{y_k} W_{y_{k-1}} \ldots W_{y_1}  |P_{\vec{W}}\rangle
+
\langle u_{{\cal X}}| 
W_{y_k}
W_{y_{k-1}} \ldots 
W_{y_1}A |P_{\vec{W}}\rangle 
\nonumber \\
&+\langle u_{{\cal X}}|  W_{y_k} W_{y_{k-1}} \ldots W_{y_1}  |Q\rangle
\nonumber \\
\stackrel{(b)}{=} &
-c \langle u_{{\cal X}}| 
 W_{y_k} W_{y_{k-1}} \ldots W_{y_1}  |P_{\vec{W}}\rangle
+
\langle u_{{\cal X}}| 
W_{y_k}
W_{y_{k-1}} \ldots 
W_{y_1}A |P_{\vec{W}}\rangle 
\nonumber \\
&+\langle u_{{\cal X}}| 
 W_{y_k} W_{y_{k-1}} \ldots W_{y_1}  
\Bigg( c '|P_{\vec{W}} \rangle -A |P_{\vec{W}} \rangle
+\bigg(\sum_{y \in {\cal Y}} B_y\bigg) | P_{\vec{W}} \rangle \Bigg)
\nonumber \\
\stackrel{(c)}{=}&
0.\Label{9-27-3B}
\end{align}
Here, $(a)$ follows from a derivation similar to \eqref{9-27-3}.
That is, we need to care about the derivative of $|P_{\vec{W}_\theta} \rangle $.
$(b)$ follows from \eqref{27-a} and \eqref{27-e},
and $(c)$ does from \eqref{27-f}.
So, we obtain (2).

\PF{${\bf (C3)} \Rightarrow {\bf (C1)}$}
Assume {\bf (C3)}.
We define 
$\vec{W}_{\theta,y}'$ in the same way as the proof of Theorem \ref{L27-2}.
So, similar to the proof of Theorem \ref{L27-2},
there exist an invertible map $T_\theta$ on ${\cal V}_{{\cal X}}$
and $(B_{\theta,y})_{y\in {\cal Y}} \in {\cal L}_{2,\vec{W}}$
such that
$T_\theta P_{\vec{W}}=P_{\vec{W}_\theta'}$,
$B_{\theta,y} ({\cal V}^{k_{(P,\vec{W})}}(P) +\Ker P^{k_{\vec{W}}}[\vec{W}] )
\subset \Ker P^{k_{\vec{W}}}[\vec{W}]$
and 
$\vec{W}_{\theta ,y}'= T_\theta^{-1}(W_y+B_{\theta,y})T_\theta$.

Choosing $A$ and $B_y$ in the same way as the proof of Theorem \ref{L27-2}, we have
\begin{align}
\frac{d}{d\theta}\vec{W}_{\theta ,y}|_{\theta=0}=
[W_y,A]+B_y
\Label{9-27-2b}.
\end{align}
Then, in the same way as the proof of Theorem \ref{L27-2}, we obtain 
\eqref{27-2}.
Thus, we obtain {\bf (C1)}.

\if0 
\begin{proofof}{Lemma \ref{L1-26}}
Due to the linearity, it is enough to show that 
 $ ([W_y,A])_{y\in {\cal Y}} $ is not zero when $A$ is not zero.
We have  $ ([W_y,A] | u_{{\cal X}}\rangle)_{y\in {\cal Y}} 
=(- AW_y | u_{{\cal X}}\rangle)_{y\in {\cal Y}} $.
When $A^T |u_{{\cal X}}\rangle=0$,
due to the assumption, 
the vectors $- AW_y | u_{{\cal X}}\rangle$ are zero for any $y$ if and only if $A=0$.
So, we obtain the desired statement.
\end{proofof}
\fi

\section{Proofs of Lemmas \ref{T1-27} and \ref{L1-26-2}}\Label{A3}

To show Lemma \ref{T1-27}, we prepare Lemma \ref{L1-26-3}.

\begin{lemma}\Label{L1-26-3}
Let ${\cal V}_1$ be the direct sum space ${\cal V}_2+{\cal V}_3$
of two vector spaces ${\cal V}_2$ and ${\cal V}_3$
with the condition ${\cal V}_2\cap{\cal V}_3=\{0\}$.
Let ${\cal V}_4$ (${\cal V}_5$) be a subspace of ${\cal V}_2$ (${\cal V}_3$).
Assume that a linear map $\alpha_1$ ($\alpha_2$) from ${\cal V}_6$ to ${\cal V}_2$ (${\cal V}_3$) satisfies that
(1) $\alpha_1({\cal V}_1)\cap {\cal V}_4 = \{0\} $ and (2) $\alpha_2(\Ker \alpha_1)\cap {\cal V}_5 = \{0\} $.
Define $\alpha_3(v):= \alpha_1(v)+\alpha_2(v)$.
Then, $\alpha_3({\cal V}_1)\cap ({\cal V}_4+{\cal V}_5)= \{0\}$.
\end{lemma}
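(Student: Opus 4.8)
The plan is to use the uniqueness of the direct-sum decomposition $\mathcal{V}_2 \oplus \mathcal{V}_3$ to peel off the two components of an element of the claimed intersection and then to apply hypotheses (1) and (2) in succession. I read the hypotheses as statements about the common domain $\mathcal{V}_6$ of the maps, i.e. $\alpha_1(\mathcal{V}_6) \cap \mathcal{V}_4 = \{0\}$ and $\alpha_2(\Ker \alpha_1) \cap \mathcal{V}_5 = \{0\}$, and the conclusion as $\alpha_3(\mathcal{V}_6) \cap (\mathcal{V}_4 + \mathcal{V}_5) = \{0\}$. Note first that the sum $\mathcal{V}_4 + \mathcal{V}_5$ is automatically direct, since $\mathcal{V}_4 \subset \mathcal{V}_2$, $\mathcal{V}_5 \subset \mathcal{V}_3$, and $\mathcal{V}_2 \cap \mathcal{V}_3 = \{0\}$; this is what legitimizes matching components below.

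First I would take an arbitrary $w \in \alpha_3(\mathcal{V}_6) \cap (\mathcal{V}_4 + \mathcal{V}_5)$ and write it simultaneously as $w = \alpha_3(v) = \alpha_1(v) + \alpha_2(v)$ for some $v \in \mathcal{V}_6$, and as $w = w_4 + w_5$ with $w_4 \in \mathcal{V}_4 \subset \mathcal{V}_2$ and $w_5 \in \mathcal{V}_5 \subset \mathcal{V}_3$. Since $\alpha_1(v) \in \mathcal{V}_2$ and $\alpha_2(v) \in \mathcal{V}_3$, both displays exhibit $w$ as a sum of a vector of $\mathcal{V}_2$ and a vector of $\mathcal{V}_3$; uniqueness of such a decomposition (a consequence of $\mathcal{V}_2 \cap \mathcal{V}_3 = \{0\}$) forces
\[
\alpha_1(v) = w_4 \in \mathcal{V}_4, \qquad \alpha_2(v) = w_5 \in \mathcal{V}_5 .
\]

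Next I would invoke hypothesis (1): since $\alpha_1(v) \in \alpha_1(\mathcal{V}_6) \cap \mathcal{V}_4 = \{0\}$, we get $\alpha_1(v) = 0$, i.e. $v \in \Ker \alpha_1$. Then hypothesis (2) applies: $\alpha_2(v) \in \alpha_2(\Ker \alpha_1) \cap \mathcal{V}_5 = \{0\}$, so $\alpha_2(v) = 0$ as well. Combining, $w = \alpha_1(v) + \alpha_2(v) = 0$, which gives the claim.

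There is no genuinely hard step here; the only points requiring care are (a) getting the order of the two hypotheses right — condition (1) must be used before condition (2), because (2) only constrains $\alpha_2$ on $\Ker \alpha_1$ — and (b) making explicit that the $\mathcal{V}_2$- and $\mathcal{V}_3$-components of $w$ are well defined, which needs only $\mathcal{V}_2 \cap \mathcal{V}_3 = \{0\}$ and no completeness or finite-dimensionality assumption.
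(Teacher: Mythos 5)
Your proof is correct and follows essentially the same route as the paper's: decompose the element of the intersection into its $\mathcal{V}_2$- and $\mathcal{V}_3$-components to get $\alpha_1(v)\in\mathcal{V}_4$ and $\alpha_2(v)\in\mathcal{V}_5$, apply hypothesis (1) to conclude $v\in\Ker\alpha_1$, then hypothesis (2) to kill $\alpha_2(v)$. The only difference is that you make explicit the component-matching step via uniqueness of the direct-sum decomposition, which the paper's proof takes for granted.
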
 

\begin{proofof}{Lemma \ref{L1-26-3}}
Assume that $\alpha_1(v_1)=v_4$ and $\alpha_2(v_1)=v_5$ 
for $v_1 \in {\cal V}_1$, $v_4 \in {\cal V}_4$, and  $v_5 \in {\cal V}_5$.
Condition (1) implies that $\alpha_1(v_1)=0$.
So, $v_1 \in \Ker \alpha_1$. 
Condition (2) and $\alpha_2(v_1)=v_5$ yield that $\alpha_2(v_1)=v_5=0$, which is the desired statement.
\end{proofof}

\begin{proofof}{Lemma \ref{T1-27}}
Now, we check that the space spanned by $\hat{g}_1, \ldots, \hat{g}_{l_2+l_3} $ has intersection $\{0\}$ with
${\cal N}(({\cal Y}\times {\cal X}^{2})_{\vec{W}})
+{\cal N}_{P_{\vec{W}}}(({\cal Y}\times {\cal X}^{2})_{\vec{W}})
+{\cal N}_2(({\cal Y}\times {\cal X}^{2})_{\vec{W}}))$.
For this purpose, we make preparation.
We choose the matrix $\tilde{A}$ as a diagonal matrix with diagonal entry $f(x)$. 
So, we have $W_y(x|x')(f(x)-f(x'))=[ W_y,\tilde{A}]$. 
We can restrict function $f$ so that $\sum_x f(x)=0$.
Since ${\cal N}(({\cal Y}\times {\cal X}^{2})_{\vec{W}})
= \{ (f(x)-f(x')+c)_{x,x'}  \}$ and $\langle u_{{\cal X}}| \tilde{A} |u_{{\cal X}}\rangle =0$, 
we have
\begin{align}
{\cal L}_{2,\vec{W}}+ \vec{W}_* {\cal N}(({\cal Y}\times {\cal X}^{2})_{\vec{W}})
= 
\{ (c W_y)_{y \in {\cal Y}}\}+
\{(\alpha_{y}(A))_{y \in {\cal Y}} |  \langle u_{{\cal X}}| A|u_{{\cal X}}\rangle =0 \}.
\Label{eq1-26-4}
\end{align}

To prove the above issue, it is sufficient to show that a nonzero element of 
the space spanned by $\hat{g}_1, \ldots, \hat{g}_{l_2+l_3} $ is not contained in 
the space ${\cal N}(({\cal Y}\times {\cal X}^{2})_{\vec{W}})
+{\cal N}_2(({\cal Y}\times {\cal X}^{2})_{\vec{W}}))$.
If a non-zero element is contained in the space,
its matrix components with $y=y_0,y_1$ are given as those of the element of 
the space ${\cal N}(({\cal Y}\times {\cal X}^{2})_{\vec{W}})
+{\cal N}_2(({\cal Y}\times {\cal X}^{2})_{\vec{W}}))$.
To deny this statement, 
we regard $\bar{g}_{j,y_0}$ and  $\bar{g}_{j,y_1}$ as elements of ${\cal G}(\{y_0,y_1\},{\cal X}^2)$.
Then, due to \eqref{eq1-26-4}, 
it is sufficient to show that 
the space spanned by  $\bar{g}_{1,y_0}, \ldots, \bar{g}_{d,y_0}$, $\bar{g}_{1,y_1}, \ldots, \bar{g}_{d^2-d,y_1}$
has intersection $\{0\}$ with the space
$\{(\alpha_{y_0}(A),\alpha_{y_1}(A))|  \langle u_{{\cal X}}| A|u_{{\cal X}}\rangle =0\}$.
To show this statement, we apply Lemma \ref{L1-26-3} to the case when 
${\cal V}_2$ and ${\cal V}_3$ are the set of traceless matrices,
${\cal V}_4$ is the space spanned by  $\bar{g}_{1,y_0}, \ldots, \bar{g}_{d,y_0}$,
${\cal V}_5$ is the space spanned by $\bar{g}_{1,y_1}, \ldots, \bar{g}_{d^2-d,y_1}$,
$\alpha_1$ is the map $\alpha_{y_0}$, and $\alpha_2$ is the map $\alpha_{y_1}$.
Since $\alpha_{y_0}$ is injective on $\{ A \in {\cal M}({\cal V}_{\cal X}) |  A^T |u_{{\cal X}}\rangle=0 \}$ whose dimension is the same as 
that of the image of $\alpha_{y_0}$,
due to the construction of $g_{j,y_0}$,
we find that the map $\alpha_{y_0}$ satisfies the condition for $\alpha_1$.
So, we obtain the desired statement.
\end{proofof}

\begin{proofof}{Lemma \ref{L1-26-2}}
Assume the condition $[W_{y_0},A]=0$.
Then, $A^T$ needs to has common eigenvectors with $W_{y_0}$.
Due to the condition $a^j \neq 0$ for any $j$,
the eigenspace of $A^T$ including $u_{{\cal X}}$ needs to be the whole space.
So, $A^T$ is zero, which implies the condition (1) of Condition E3.

Let $A$ be an element of the kernel of the map $A\mapsto ([W_{y_0},A],[W_{y_1},A])$.
Then, an eigenspace of $A^T$ is spanned by a subset of $\{f_i\}$.
It also is spanned by a subset of $\{f_i'\}$.
To realize both conditions, the eigenspace needs to be the whole space.
So, $A^T$ is zero, which implies the condition (2) of Condition E3.
\end{proofof}


\begin{thebibliography}{99}

\bibitem{IAK}
H. Ito, S. -I. Amari, and K. Kobayashi,
``Identifiability of Hidden Markov Information Sources and Their Minimum Degrees of Freedom,''
{\em IEEE Trans. Inform. Theory}, Vol. 38, No. 2, 324-333, (1992). 

\bibitem{AN}
S. Amari and H. Nagaoka, 
{\em Methods of Information Geometry}. Oxford University Press (2000).

\bibitem{NK}
K. Nakagawa and F. Kanaya, 
``On the converse theorem in statistical hypothesis testing for Markov chains,'' 
{\em IEEE Trans. Inform. Theory},
Vol. 39, No. 2, 629-633 (1993).

\bibitem{HN}
H. Nagaoka,
``The exponential family of Markov chains and its information geometry''
Proceedings of The 28th Symposium on Information Theory and Its Applications (SITA2005),
Okinawa, Japan, Nov. 20-23, (2005).

\bibitem{HW-est}
M. Hayashi and S. Watanabe, 
``Information Geometry Approach to Parameter Estimation in Markov Chains,'' 
{\em Annals of Statistics}, Volume 44, Number 4, 1495-1535 (2016).


\bibitem{Am}
S. Amari, 
``$\alpha$-Divergence Is Unique, Belonging to Both
$f$-Divergence and Bregman Divergence Classes,''
{\em IEEE Trans. Inform. Theory}, Vol. 55, No. 11, 4925-4931 (2009).

\bibitem{Br}
L. Bregman, 
``The relaxation method of finding a common point of convex sets and its application to the solution of problems in convex programming,'' 
{\em Comput. Math. Phys. USSR}, vol. 7, pp. 200-217, 1967.

\bibitem{HW14-2}
S. Watanabe and M. Hayashi, 
``Finite-length analysis on tail probability for Markov chain and application to simple hypothesis testing,'' 
{\em Annals of Applied Probability} vol. {27}, no.~2, pp.~811--845, (2017).

\bibitem{H2nd}
M. Hayashi,
{\em Quantum Information Theory}, Graduate Texts in Physics, Springer (2017).

\bibitem{Ozawa}
M. Ozawa, 
``Quantum measuring processes of continuous observables,'' 
{\em J. Math. Phys.,} {\bf 25} 79 (1984).

\bibitem{DZ}
A. Dembo and O. Zeitouni, 
{\em Large Deviations Techniques and Applications, 2nd ed}. Springer (1998).

\bibitem{kemeny-snell-book}
J. G. Kemeny and J. L. Snell,
{\em Finite Markov Chains},
Undergraduate Texts in Mathematics,
Springer-Verlag, New York Berlin Heidelberg Tokyo (1960).

\bibitem{HY}
M. Hayashi and Y. Yoshida,
``Asymptotic Analysis for Hidden Markovian Process with Quantum Hidden System,''
arXiv:1801.09158 (2018).

\bibitem{Sen}
E. Seneta, {\em Non Negative Matrix and Markov Chains}, Springer-Verlag,
New York, second edition (1981).

\bibitem{HHM}
M. Hayashi,
``Information Geometry Approach to Parameter Estimation in Hidden Markov Model,''
arXiv:1705.06040 (2017).
\end{thebibliography}
\end{document}